\theoremstyle{plain}
\newtheorem{thm}{Theorem}[section]
\newtheorem{lem}[thm]{Lemma}
\newtheorem{cor}[thm]{Corollary}
\newtheorem{prob}[thm]{Problem}
\theoremstyle{definition}
\newtheorem{rem}[thm]{Remark}
\title{Composition inverses of the variations of the Baum--Sweet sequence}
\author{Łukasz Merta}
\date{}
\begin{document}
\thispagestyle{empty}

\maketitle

\begin{abstract}

Studying and comparing arithmetic properties of a given automatic sequence and the sequence of coefficients of the composition inverse of the associated formal power series (the formal inverse of that sequence) is an interesting problem. This problem was studied before for the Thue--Morse sequence. In this paper, we study arithmetic properties of the formal inverses of two sequences closely related to the well-known Baum--Sweet sequence. We give the recurrence relations for their formal inverses and we determine whether the sequences of indices at which these formal inverses take value $0$ and $1$ are regular. We also show an unexpected connection between one of the obtained sequences and the formal inverse of the Thue--Morse sequence.

\end{abstract}

\vspace{0.5 cm}
\noindent 2010 Mathematics Subject Classification. 11B83, 11B85.

\noindent Keywords: Baum--Sweet sequence, automatic sequence, regular sequence, formal power series.

\vspace{0.1 cm}
\section{Introduction}

\noindent Let $k \geq 2$ and let $(a_n)_{n \in \mathbb{N}}$ be an infinite sequence. The sequence $(a_n)_{n \in \mathbb{N}}$ is called $k$-automatic if it is a sequence whose $n$-th term is generated from the base-$k$ expansion of $n$ using a finite algorithm. This is equivalent to the fact that the $k$-kernel of the sequence $(a_n)_{n \in \mathbb{N}}$, namely
$$\mathcal{K}_k((a_n)_{n \in \mathbb{N}}) = \{(a_{k^in+j})_{n \in \mathbb{N}} : i \in \mathbb{N}, \, 0 \leq j < k^i\},$$

\noindent is finite \cite[Theorem 6.6.2]{AS}.

\vspace{0.3 cm}
\noindent One of the best known examples of automatic sequence is the Thue--Morse sequence $(t_n)_{n \in \mathbb{N}}$, whose $n$-th term is equal to the sum of digits in the binary expansion of $n$, taken modulo $2$. The sequence $(t_n)_{n \in \mathbb{N}}$ is $2$-automatic and it satisfies the following recurrence relations:
$$t_{2n} = t_n, \qquad t_{2n+1} = 1 - t_n.$$

\vspace{0.2 cm}
\noindent By definition, $k$-automatic sequences take only finitely many values. We can consider a generalization of the class of $k$-automatic sequences, where the sequences can take infinitely many values as well. Such a class, called the class of $k$-regular sequences, was introduced by Allouche and Shallit \cite{REG, REG2}. An infinite sequence $(u_n)_{n \in \mathbb{N}}$, taking values in a $\mathbb{Z}$-module $M$ is called $k$-regular if there exists a finite number of sequences $(s_n^{(1)})_{n \in \mathbb{N}}, \dots, (s_n^{(m)})_{n \in \mathbb{N}},$ taking values in $M$, such that every subsequence in $\mathcal{K}_k((u_n)_{n \in \mathbb{N}})$ can be written in the form
$$u_{k^in+j} = \sum_{r=1}^ma_rs_n^{(r)}$$

\noindent for some $a_1, \dots, a_r \in \mathbb{Z}$.

\vspace{0.3 cm}
\noindent The class of regular sequences is closed under addition and multiplication, i.e.\ if the sequences $(a_n)_{n \in \mathbb{N}}$ and $(b_n)_{n \in \mathbb{N}}$ are regular, then so are $(a_n+b_n)_{n \in \mathbb{N}}$ and $(a_nb_n)_{n \in \mathbb{N}}$. If the sequence $(a_n)_{n \in \mathbb{N}}$ is $k$-regular, then so is $(a_{pn + q})_{n \in \mathbb{N}}$ for all $p \geq 1$ and $q \geq 0$. Conversely, if the sequences $(a_{pn+q})_{n \in \mathbb{N}}$ are $k$-regular for $p \geq 2$ and $q \in \{0, 1, \dots, p-1\}$, then so is the sequence $(a_n)_{n \in \mathbb{N}}$. Finally, if the sequence $(a_n)_{n \in \mathbb{N}}$ is $k$-regular, then the sequence $(a_n \!\! \mod m)_{n \in \mathbb{N}}$ is $k$-automatic for any $m \geq 1$. All these properties and proofs can be found in \cite{REG}.

\vspace{0.3 cm}
\noindent The main aim of this paper is to consider the following problem. Let $p$ be a prime number. Consider a $p$-automatic sequence $(u_n)_{n \in \mathbb{N}}$ with values in the finite field $\mathbb{F}_p$. We consider a formal power series $U = \sum_{n=0}^{\infty}u_nX^n \in \mathbb{F}_p[\![X]\!]$. It is known that if the sequence $(u_n)_{n \in \mathbb{N}}$ satisfies $u_0 = 0$ and $u_1 \neq 0$, then there exists a unique formal power series $V = \sum_{n=0}^{\infty}v_nX^n \in \mathbb{F}_p[\![X]\!]$ such that $U(V(X)) = V(U(X)) = X$ (see \cite[Theorem 6.1(a)]{LANG}). The sequence $(v_n)_{n \in \mathbb{N}}$ of coefficients of $G$ is then called the formal inverse of the sequence $(u_n)_{n \in \mathbb{N}}$. Moreover, by Christol's theorem \cite[Theorem 12.2.5]{AS}, the sequence $(v_n)_{n \in \mathbb{N}}$ is $p$-automatic as well. Our aim is to study properties of this sequence. 

\vspace{0.3 cm}
\noindent This problem was studied by M.\ Gawron and M.\ Ulas in\cite{PTM} in the case when the sequence $(u_n)_{n \in \mathbb{N}}$ is the Thue--Morse sequence $(t_n)_{n\in \mathbb{N}}$. The authors proved some interesting properties of the formal inverse of the Thue--Morse sequence, denoted by $(c_n)_{n \in \mathbb{N}}$. In particular, they found recurrence relations for the sequence $(c_n)_{n \in \mathbb{N}}$ and an automaton that generates it. Furthermore, they proved that the sequence $(c_n)_{n \in \mathbb{N}}$ contains arbitrarily long sequences of consecutive $0$'s and at most $4$ consecutive $1$'s.  Moreover, they considered the formal power series $C = \sum_{n=0}^{\infty}c_nX^n \in \mathbb{C}[\![X]\!]$ (where the coefficients $c_n \in \mathbb{F}_2$ are regarded as integers $0$ and $1$) and they proved that it is transcendental over $\mathbb{C}(X)$.

\vspace{0.3 cm}
\noindent The authors also considered the characteristic sequences of $0$'s and $1$'s in the sequence $(c_n)_{n \in \mathbb{N}}$, i.e.\ the pair of increasing sequences $(a_n)_{n \in \mathbb
{N}}$ and $(d_n)_{n \in \mathbb{N}}$, satisfying the following equalities:
$$\begin{array}{c}
\{m \in \mathbb{N} : c_m = 1\} = \{a_n : n \in \mathbb{N}\}, \vspace{0.05 cm}\\
\{m \in \mathbb{N} : c_m = 0\} = \{d_n : n \in \mathbb{N}\}.\end{array}$$

\vspace{0.2 cm}
\noindent They proved that the sequence $(a_n)_{n \in \mathbb{N}}$ is $2$-regular and the sequence $(l_n)_{n \in \mathbb{N}}$ is not $k$-regular for any $k \geq 2$. They also managed to prove some interesting properties of the sequence $(a_n)_{n \in \mathbb{N}}$. They studied the behavior of the sequence $(a_{n+1} - a_n)_{n \in \mathbb{N}}$ and they proved that the set
$$\mathcal{A} = \left\{\frac{a_n}{n^2} : n \geq 1\right\}$$

\noindent is dense in $\left[ \frac{1}{6}, \frac{1}{2}\right]$.

\vspace{0.3 cm}
\noindent In this paper, we will follow similar lines to study the formal inverse of another $2$-automatic sequence, namely the Baum--Sweet sequence $(b_n)_{n \in \mathbb{N}}$, whose $n$-th term is equal to $0$ if the binary expansion of $n$ contains a block of $0$'s of odd length and $1$ otherwise (with $b_0 = 1)$. Since the sequence itself does not have a formal inverse, we consider two variations of this sequence:
$$b'_n = \left\{\begin{array}{ll}
0 & \text{if } n = 0, \\
b_n & \text{if } n \geq 1, 
\end{array}\right.
\qquad
b''_n = \left\{\begin{array}{ll}
0 & \text{if } n = 0, \\
b_{n-1} & \text{if } n \geq 1.
\end{array}\right.$$

\noindent These two sequences have the formal inverses $(p_n)_{n \in \mathbb{N}}$ and $(q_n)_{n \in \mathbb{N}}$, respectively. 

\vspace{0.3 cm}
\noindent We start with some basic properties of the Baum--Sweet sequence itself, including the fact that the characteristic sequence of $1$'s is not $k$-regular for any $k$ (which is not true for the Thue--Morse sequence). We then consider a formal power series $B = \sum_{n=0}^{\infty}b_nX^n$. We find the algebraic relation for the series $B$, which allows us to find recurrence relations for the sequences $(p_n)_{n \in \mathbb{N}}$ and $(q_n)_{n \in \mathbb{N}}$. We then discuss properties of these sequences. 

\vspace{0.3 cm}
\noindent We prove that the sequence $(p_n)_{n \in \mathbb{N}}$ is ultimately constant and hence the associated formal power series is a rational function. We then find the recurrence relations for the sequence $(q_n)_{n \in \mathbb{N}}$ and the automaton that generates it. We also prove that the associated formal power series (with coefficients regarded as integers) is transcendental over $\mathbb{C}(X)$. 

\vspace{0.3 cm}
\noindent Moreover, we consider an increasing sequence $(u_n)_{n \in \mathbb{N}}$, satisfying
$$\{u_n : n \in \mathbb{N}\} = \{m \in \mathbb{N} : q_n = 1\}.$$

\noindent We prove that this sequence is $2$-regular and we show that it is closely related to the characteristic sequence $(a_n)_{n \in \mathbb{N}}$ of $1$'s in the formal inverse of the Thue--Morse sequence. This implies that both sequences have very similar properties.

\vspace{0.3 cm}
\noindent Finally, we consider a family of subsequences
$$\{(b_n^{(r)})_{n \in \mathbb{N}} : r \in \mathbb{N}, \, r \geq 2\},$$

\noindent such that $b_n^{(r)} = 0$ if and only if the binary expansion of $n$ contains a block of $0$'s of length not divisible by $r$ and $b_n^{(r)} = 1$ otherwise. These sequences are generalizations of the Baum--Sweet sequence (we recover the original sequence for $r = 2$). We then study properties of these sequences and their formal inverses. These properties turn out to be quite similar to properties in the case $r = 2$.

\vspace{0.3 cm}
\noindent In the last section, we give some open problems related to the results shown in previous sections.

\vspace{0.3 cm}
\noindent This paper is an extended version of one of the chapters of the Master's thesis defended at the Jagiellonian University in 2017 \cite{MT}.

\section{Basic results} \label{sec_basic}

\noindent The Baum--Sweet sequence $(b_n)_{n \in \mathbb{N}}$ is a well-known example of a $2$-automatic sequence, with values $0$ and $1$. It is defined by the following rule: we have $b_n = 0$ if the binary expansion of $n$ contains a block of an odd number of $0$'s and $b_n = 1$ otherwise. The first $20$ terms of the sequence $(b_n)_{n \in \mathbb{N}}$ are:
$$1, 1, 0, 1, 1, 0, 0, 1, 0, 1, 0, 0, 1, 0, 0, 1, 1, 0, 0, 1, 0 \dots$$

\noindent It is clear that the sequence $(b_n)_{n \in \mathbb{N}}$ satisfies the following recurrence relations:
$$b_0 = 1, \qquad b_{2n+1} = b_{4n} = b_n, \qquad b_{4n+2} = 0$$

\noindent for $n \in \mathbb{N}$. Hence its $2$-kernel consists of three subsequences: $b_n$, $b_{2n}$ and $b_{4n+2}$. Therefore, the sequence $(b_n)_{n \in \mathbb{N}}$ is generated by an automaton with $3$ states. It is shown on Figure \ref{bn_auto}.

\begin{figure}[ht!]
\begin{center}
\begin{tikzpicture}[->, shorten >= 1pt, node distance=2.5 cm, on grid, auto]
	\node[state, inner sep=1pt, line width = 1.5pt] (c_1) {$1$};
	\node[state, inner sep=1pt] (c_2) [right=of c_1] {$1$};
	\node[state, inner sep=1pt] (c_3) [right=of c_2] {$0$};
	 \path[->]
	 (c_1) edge [bend left] node {0} (c_2)
	 	   edge [loop above] node {1} (c_1)
	 (c_2) edge [bend left] node {0} (c_1)
	       edge node {1} (c_3)
	 (c_3) edge [loop above] node {0, 1} (c_3);	 
\end{tikzpicture}
\vspace{-0.2 cm}
\caption{The automaton generating the sequence $(b_n)_{n \in \mathbb{N}}$. \label{bn_auto}}
\end{center}
\end{figure}
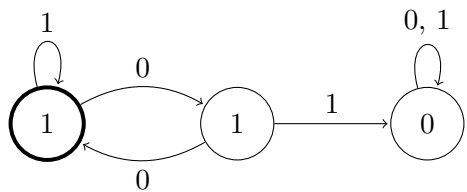

\noindent In the first part of this section, we are going to prove some properties of the sequence $(b_n)_{n \in \mathbb{N}}$. We start with the following theorem:

\begin{thm} \label{baum_01} The Baum--Sweet sequence $(b_n)_{n \in \mathbb{N}}$ contains arbitrarily long sequences of consecutive $0$'s and the maximal number of consecutive $1$'s in this sequence is equal to $2$.
\end{thm}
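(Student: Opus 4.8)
The plan is to prove both halves of the statement by reasoning directly about binary expansions, exploiting the defining property of the Baum--Sweet sequence: $b_n = 0$ precisely when the binary expansion of $n$ contains a maximal block of $0$'s of odd length.

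\textbf{Arbitrarily long runs of $0$'s.} First I would exhibit, for each $N$, an interval of $N$ consecutive integers on which $b$ vanishes identically. The natural candidate is to take a large power of two, say $n = 2^{2k}$ for suitable $k$, and look at the integers $n, n+1, \dots, n+2^{m}-1$ for an appropriate $m$; each such integer has binary expansion $1\underbrace{0\cdots0}_{2k-m}w$ where $w$ is an $m$-bit string. For most choices of $w$ the leading block of $0$'s (together with whatever $0$'s $w$ begins with) will have odd length, forcing $b = 0$; the point is to choose the alignment (the parity of $2k-m$) so that this happens for \emph{every} $w$ in a long run, or else to use a slightly different template. A clean way to guarantee it: consider integers of the form $2^{2k+1} + j$ with $0 \le j < 2^{k}$, so the binary expansion is $1\,0\,\underbrace{0\cdots0}_{2k-k}\cdots$; one checks that the isolated run of $0$'s immediately after the leading $1$ can be made to have odd length independently of the low-order bits, giving $2^k$ consecutive zeros of $b$ (after a shift). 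I would just pick the template carefully and verify the parity bookkeeping — this is the only place a small computation is needed, and it is routine.

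\textbf{At most two consecutive $1$'s.} Here I would argue by contradiction: suppose $b_n = b_{n+1} = b_{n+2} = 1$ for some $n$. Among any three consecutive integers, at least two are consecutive even–odd or odd–even pairs, and crucially among $n, n+1, n+2$ there are at least two even numbers or we can locate one even number $2m$ with $b_{2m}=1$ and examine the carry structure when passing to $2m+1$ or $2m+2$. The cleanest route is to use the recurrence $b_{4n+2} = 0$: I claim that in any block of three consecutive integers, at least one is $\equiv 2 \pmod 4$ unless the block is $\{4j-1, 4j, 4j+1\}$ or $\{4j, 4j+1, 4j+2\}$ type — actually every window of length $3$ either contains an integer $\equiv 2\pmod 4$ (forcing a $0$) or the window is contained in $\{4j,\dots,4j+3\}\setminus\{4j+2\}$, i.e. it must be $\{4j-1,4j,4j+1\}$. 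So the only way to avoid a forced zero is $n = 4j-1$, giving the triple $b_{4j-1}, b_{4j}, b_{4j+1}$. Using $b_{2m+1}=b_{4m}=b_m$, this triple equals $b_{2j-1}, b_j, b_{2j}$ — wait, $b_{4j-1} = b_{2(2j-1)+1} = b_{2j-1}$ and $b_{4j} = b_j$ and $b_{4j+1} = b_{2j}$, so all three being $1$ forces $b_{2j-1} = b_{2j} = b_j = 1$. Now $b_{2j-1}$ and $b_{2j}$ are themselves two consecutive terms both equal to $1$, and I would iterate: repeatedly halving, the "all $1$'s" condition descends to smaller indices. The main obstacle is making this descent airtight — I need to track that at each stage the relevant pair of consecutive $1$'s together with a third forced value persists, and show the descent terminates at a small base case (indices $\le 7$, say) which can be checked by hand against the first terms listed, yielding a contradiction. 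Equivalently, one can phrase the whole argument as: the automaton in Figure \ref{bn_auto} reading any two consecutive binary strings cannot stay in states labelled $1$ for three consecutive $n$, which reduces to a finite check on the transition structure. I expect the descent/finite-check bookkeeping to be the part requiring the most care, while the "arbitrarily long zeros" half is comparatively easy.
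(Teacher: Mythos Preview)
Your Part 1 template does not work as written. For $n = 2^{2k+1} + 1$ the binary expansion is $1\underbrace{0\cdots0}_{2k}1$, whose only zero block has even length, so $b_n = 1$; thus your proposed run of zeros is already broken at $j=1$ (concretely, $b_9 = 1$). The length of the block after the leading $1$ genuinely depends on the top bit of $j$, so no choice of parity alone fixes this. The remedy is what you gesture at (``a slightly different template'') but never pin down: insert a second fixed $1$-bit high up so that an odd zero block is locked in regardless of the low bits. The paper takes $n = 5\cdot 2^k$, so that every $n+j$ with $0 \le j < 2^k$ has binary expansion beginning $101\ldots$, and the isolated $0$ between the two leading $1$'s forces $b_{n+j}=0$ for the whole range.

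Your Part 2 descent is valid --- the invariant $b_{2j-1}=b_{2j}=b_j=1$ forces $j$ even via $b_{4m+2}=0$, and the recurrences $b_{4j'-1}=b_{2j'-1}$, $b_{4j'}=b_{j'}$, $b_{2j'}\!=b_j$ pass the same invariant to $j'=j/2$, terminating in the contradiction $b_2=1$ --- but it is more work than needed. The paper argues in one step: if $b_n=b_{n+1}=1$ with $n>0$ even, write $n=m\cdot 2^{2k}$ with $m$ odd and $k\ge1$; then $(n+1)_2=(m)_2\underbrace{0\cdots0}_{2k-1}1$ has an odd zero block, so $b_{n+1}=0$. Hence for $n>0$ two consecutive $1$'s must begin at an odd index, which immediately rules out three in a row. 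Your reduction to the residue class $4j-1$ followed by descent reaches the same conclusion by a longer route; what it buys you is that it uses only the three recurrences and never looks at a binary expansion directly, at the cost of the extra bookkeeping you correctly anticipate.
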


\begin{proof} Let $k \in \mathbb{N}$ and consider $n = 5 \cdot 2^k$. Then the binary representation of $n$ has the form
$$(n)_2 = 101\underbrace{00 \dots 0}_k,$$

\noindent and the leading digits of the numbers $n, n+1, \dots, n+2^k-1$ are $101$. Hence by the definition of the sequence $(b_n)_{n \in \mathbb{N}}$, we have $b_n = b_{n+1} = \dots = b_{n + 2^k-1} = 0$. This proves the first part of our theorem.

\vspace{0.3 cm}
\noindent The second part of the theorem is a consequence of the fact that if $b_n = b_{n+1} = 1$ and $n > 0$, then $n$ is odd. In order to prove this, suppose that $b_n = b_{n+1} = 1$ and $n > 0$ is even. Therefore, by definition of the sequence $(b_n)_{n \in \mathbb{N}}$, $n$ has the form $n = m \cdot 2^{2k}$, $k \geq 1$ with $2 \nmid m$. We thus have
$$(n+1)_2 = (m)_2\underbrace{00\dots 0}_{2k-1}1,$$

\noindent which implies that $b_{n+1} = 0$, a contradiction. 
\end{proof}

\begin{rem} One can prove that $b_n = b_{n+1} = 1$ if and only if $n$ has the form $n = 4^m - 1,\, m \in \mathbb{N}.$
\end{rem}

\noindent We now consider the sequence of indices at which the sequence $(b_n)_{n \in \mathbb{N}}$ takes value $1$. More precisely, we define the sequence $(l_n)_{n \in \mathbb{N}}$ as an increasing sequence satisfying
$$\{ l_n : n \in \mathbb{N}\} = \{ m \in \mathbb{N} : b_m = 1 \}.$$

\noindent From the definition of the sequence $(b_n)_{n \in \mathbb{N}}$, the sequence $(l_n)_{n \in \mathbb{N}}$ consists of these $m \in \mathbb{N}$ such that the binary expansion of $m$ does not contain any block of consecutive $0$'s of odd length. We are going to prove that this sequence is not $k$-regular for any $k$. Since we could not find the proof of this theorem in the mathematical folklore, we show our own proof.  

\vspace{0.3 cm}
\noindent First, let us recall the definition of the infinite Fibonacci word. Let $\varphi : \{0, 1\}^* \rightarrow \{0, 1\}^*$ be a morphism of monoids such that $\varphi(0) = 01$ and $\varphi(1) = 0$. This morphism is called {\it the Fibonacci morphism}. The Fibonacci word 
$$\varphi^{\omega}(0) = 010010100100101\dots$$

\noindent is the fixed point of $\varphi$ (we extend $\varphi$ to infinite words). It is clear that an infinite word that is a fixed point of $\varphi$ necessarily begins with $0$ and is unique. It turns out that the sequence $(l_n)_{n \in \mathbb{N}}$ and the infinite Fibonacci word are closely related.

\vspace{0.3 cm}
\noindent Write $\varphi_n$ for the $n$-th term of the infinite Fibonacci word. It can be shown that the frequency of $0$'s in the sequence $(\varphi_n)_{n \in \mathbb{N}}$ exists and is equal to $\frac{\sqrt{5} - 1}{2}$ \cite[Example 8.1.3]{AS}. However, we know that for automatic sequences, if the frequency of a given letter exists, it has to be a rational number \cite[Theorem 8.4.5(b)]{AS}. Hence we obtain the following conclusion.

\begin{lem} \label{phi_naut} The sequence $(\varphi_n)_{n \in \mathbb{N}}$ is not $k$-automatic for any $k$.
\end{lem}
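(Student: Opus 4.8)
The plan is simply to combine the two ingredients isolated in the discussion immediately preceding the statement. First I would recall from \cite[Example 8.1.3]{AS} that the frequency of the letter $0$ in the Fibonacci word $(\varphi_n)_{n \in \mathbb{N}}$ exists and equals $\frac{\sqrt{5}-1}{2}$, and I would observe that this number is irrational: it equals $\phi^{-1}$, where $\phi = \frac{1+\sqrt{5}}{2}$ is the golden ratio, and the irrationality of $\sqrt{5}$ is classical.

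Then I would argue by contradiction. Suppose $(\varphi_n)_{n \in \mathbb{N}}$ is $k$-automatic for some $k \geq 2$. By \cite[Theorem 8.4.5(b)]{AS}, whenever the frequency of a letter in a $k$-automatic sequence exists, that frequency must be rational. Applying this to the letter $0$ would force $\frac{\sqrt{5}-1}{2} \in \mathbb{Q}$, contradicting the previous step. Hence $(\varphi_n)_{n \in \mathbb{N}}$ is not $k$-automatic for any $k \geq 2$.

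There is essentially no obstacle here beyond quoting the two cited results; the only arithmetic input is the (trivial) irrationality of $\frac{\sqrt{5}-1}{2}$. If one preferred to avoid citing the frequency formula, one could compute it directly: the numbers of $0$'s and of $1$'s in the finite word $\varphi^n(0)$ are consecutive Fibonacci numbers (which follows at once from $\varphi(0)=01$ and $\varphi(1)=0$ by induction on $n$), so the frequency of $0$'s along the prefixes $\varphi^n(0)$ tends to $\lim_n F_n/F_{n+1} = \frac{\sqrt{5}-1}{2}$, and a routine argument extends this to the frequency taken over all prefixes. Invoking \cite[Example 8.1.3]{AS} is, however, cleaner, and the whole lemma is then a one-line contradiction.
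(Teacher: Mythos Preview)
Your proposal is correct and is exactly the argument the paper gives: the paragraph preceding the lemma cites \cite[Example 8.1.3]{AS} for the frequency $\frac{\sqrt{5}-1}{2}$ and \cite[Theorem 8.4.5(b)]{AS} for the rationality of letter frequencies in automatic sequences, and the lemma is stated as the immediate conclusion. There is nothing to add or change.
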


\noindent Let us define a sequence of words $(\Lambda_n)_{n \in \mathbb{N}}$ in the following way: 
$$\Lambda_0 = 1, \quad \Lambda_1 = 01, \quad \Lambda_n = \Lambda_{n-2}\Lambda_{n-1}, \quad n \geq 2.$$

\noindent We are going to prove the following result.
\begin{thm} \label{fib_word} The infinite word $\Lambda_0\Lambda_1\Lambda_2\dots$ is equal to the Fibonacci word with the letter $0$ and $1$ interchanged.
\end{thm}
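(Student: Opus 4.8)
\noindent\emph{Proof plan.} The plan is to identify the one-sided infinite word $W:=\Lambda_{0}\Lambda_{1}\Lambda_{2}\cdots$ as a fixed point of the morphism $\psi\colon\{0,1\}^{*}\to\{0,1\}^{*}$ determined by $\psi(1)=10$ and $\psi(0)=1$. This $\psi$ is simply the Fibonacci morphism $\varphi$ of the excerpt with the two letters interchanged, so its fixed point is $\varphi^{\omega}(0)$ with $0$ and $1$ interchanged --- precisely the word we wish to recognise. Moreover a nonempty fixed point of $\psi$ must begin with $1$ (if it began with $0$ it would begin with $\psi(0)=1$), and $\psi^{n}(1)$ is a proper prefix of $\psi^{n+1}(1)$ for every $n$, so the lengths tend to infinity and $\psi$ has a unique fixed point beginning with $1$. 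Since $W$ begins with $\Lambda_{0}=1$, it therefore suffices to prove $\psi(W)=W$.

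The technical core will be the identity
$$\psi(\Lambda_{n})\cdot 1=1\cdot\Lambda_{n+1}\qquad(n\ge 0),$$
which I would prove by strong induction on $n$. First I would record the trivial fact that every $\Lambda_{n}$ ends in the letter $1$: this holds for $\Lambda_{0},\Lambda_{1}$, and $\Lambda_{n}=\Lambda_{n-2}\Lambda_{n-1}$ inherits its last letter from $\Lambda_{n-1}$; write $\Lambda_{n}=\Lambda_{n}'\,1$ accordingly. The cases $n=0,1$ are one-line checks ($\psi(\Lambda_{0})=10$, $\psi(\Lambda_{1})=110$). For $n\ge 2$ one expands $\psi(\Lambda_{n})=\psi(\Lambda_{n-2})\psi(\Lambda_{n-1})$; the inductive hypotheses at $n-2$ and $n-1$ give $\psi(\Lambda_{n-2})=1\Lambda_{n-1}'$ and $\psi(\Lambda_{n-1})=1\Lambda_{n}'$ (it is here that ``ends in $1$'' is used, to strip a trailing letter), so $\psi(\Lambda_{n})=(1\Lambda_{n-1}')(1\Lambda_{n}')=1\Lambda_{n-1}\Lambda_{n}'$ and hence $\psi(\Lambda_{n})\cdot 1=1\Lambda_{n-1}\Lambda_{n}=1\Lambda_{n+1}$, the last step being the recursion $\Lambda_{n+1}=\Lambda_{n-1}\Lambda_{n}$.

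From the identity, $\psi(\Lambda_{n})=1\Lambda_{n+1}'$ for every $n$, and so
$$\psi(W)=\psi(\Lambda_{0})\psi(\Lambda_{1})\psi(\Lambda_{2})\cdots=(1\Lambda_{1}')(1\Lambda_{2}')(1\Lambda_{3}')\cdots.$$
Regrouping so that each trailing $1$ joins the preceding block (using $\Lambda_{k}'1=\Lambda_{k}$ and $\Lambda_{0}=1$), the right-hand side equals $1\,(\Lambda_{1}'1)(\Lambda_{2}'1)(\Lambda_{3}'1)\cdots=1\,\Lambda_{1}\Lambda_{2}\Lambda_{3}\cdots=\Lambda_{0}\Lambda_{1}\Lambda_{2}\cdots=W$. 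Hence $\psi(W)=W$, which by the uniqueness established above identifies $W$ with the Fibonacci word with $0$ and $1$ interchanged, completing the proof.

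The main obstacle, I expect, is locating the right statement to induct on: the naive guesses that $\psi(\Lambda_{n})$ equals $\Lambda_{n+1}$, or equals the mirror image of $\Lambda_{n+1}$, both fail, and one has to observe that the discrepancy is confined to a single boundary letter --- which is what the formulation $\psi(\Lambda_{n})\cdot 1=1\cdot\Lambda_{n+1}$ captures. A related point worth an explicit sentence: one should not try to match $\Lambda_{n}$ directly against the finite Fibonacci words $\varphi^{n}(0)$, because the recursion $\Lambda_{n}=\Lambda_{n-2}\Lambda_{n-1}$ concatenates its factors in the \emph{opposite} order to the usual one, so $\Lambda_{n}$ is in fact --- up to interchanging the letters --- the reversal of $\varphi^{n}(0)$, and carrying those reversals through is messier than the morphism argument above. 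Finally, the two standard facts used in passing (that $\psi$ has a unique fixed point beginning with $1$, and that it equals $\varphi^{\omega}(0)$ with the letters swapped) should be stated but are routine.
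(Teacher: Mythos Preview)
Your proof is correct and follows essentially the same route as the paper: both identify the word as the fixed point of the morphism $1\mapsto 10$, $0\mapsto 1$ via the key identity $\psi(\Lambda_n)\cdot 1 = 1\cdot\Lambda_{n+1}$ (the paper writes it as $1\Lambda_{n+1}=\varphi'(\Lambda_n)1$), proved by induction on $n$. Your final regrouping of $\psi(W)$ is just a reformulation of the paper's second induction $\varphi'(\Lambda_0\cdots\Lambda_n)1=\Lambda_0\cdots\Lambda_{n+1}$, and your treatment is arguably tidier in checking both base cases $n=0,1$ and in recording explicitly that each $\Lambda_n$ ends in $1$.
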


\begin{proof}
\noindent It is sufficient to prove that the infinite word $\Lambda_0\Lambda_1\Lambda_2\dots$ is a fixed point of the morphism $\varphi' : \{0, 1\}^\star \rightarrow \{0, 1\}^\star$ such that
$$\varphi'(0) = 1, \quad \varphi'(1) = 10.$$

\noindent We start with the proof that we have $1\Lambda_{n+1} = \varphi'(\Lambda_n)1$ for all $n \in \mathbb{N}$. We use induction on $n$. For $n = 0$, we have $1\Lambda_1 = 101 = \varphi'(\Lambda_0)1$. By the definition of the sequence $(\Lambda_n)_{n \in \mathbb{N}}$, we have
$$1\Lambda_{n+2} = 1\Lambda_n\Lambda_{n+1} = \varphi'(\Lambda_{n-1})1\Lambda_{n+1} = \varphi'(\Lambda_{n-1})\varphi'(\Lambda_n)1 = \varphi'(\Lambda_{n+1})1,$$

\noindent and the desired equality is proved. As a consequence, using another simple induction on $n$, we get that
\begin{equation} \label{lambda_eq}
\varphi'(\Lambda_0\Lambda_1 \dots \Lambda_n)1 = \Lambda_0\Lambda_1 \dots \Lambda_{n+1}
\end{equation}

\vspace{0.2 cm}
\noindent Since all the words in the sequence $(\Lambda_n)_{n \in \mathbb{N}}$ end with $1$, equation (\ref{lambda_eq}) implies that $\varphi'(\Lambda_0\Lambda_1\dots \Lambda_{n+1})$ contains the word $\Lambda_0\Lambda_1 \dots \Lambda_n$ as a subword for all $n \in \mathbb{N}$. This implies that $\Lambda_0\Lambda_1\Lambda_2 \dots$ is indeed the fixed point of $\varphi'$.
\end{proof}

\noindent We are now ready to prove the following result.

\begin{thm} \label{ln_mod2_eq} We have the following equality:
$$l_n \!\!\!\! \mod 2 = \left\{ \begin{array}{ll}
0 & \text{ if } \, n = 0, \\
1 & \text{ if } \, n = 1, \\
1 - \varphi_{n-2} & \text{ if } \, n \geq 2.
\end{array} \right.$$
\end{thm}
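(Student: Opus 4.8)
The plan is to describe the set $\{m:b_m=1\}$ block by block between consecutive powers of $2$ and to recognise the resulting sequence of parities as the fixed point $\Lambda_0\Lambda_1\Lambda_2\cdots$ supplied by Theorem~\ref{fib_word}.

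\emph{Step 1 (the dyadic blocks).} For $k\ge0$ let $T_k$ be the set of integers $r$ with $0\le r<2^k$ whose length-$k$ binary word (padded with leading zeros) has all maximal blocks of $0$'s of even length. If $0\le r<2^k$, the binary expansion of $2^k+r$ is the digit $1$ followed by the length-$k$ word of $r$, and this leading $1$ changes neither which maximal $0$-blocks occur nor their lengths; hence $b_{2^k+r}=1$ if and only if $r\in T_k$. Since the intervals $[2^k,2^{k+1})$ are consecutive and $b_0=1$, listing $\{m:b_m=1\}$ in increasing order produces first $0$, then $2^0+T_0=\{1\}$, then $2^1+T_1$ in increasing order, then $2^2+T_2$, and so on. Thus $l_0=0$, $l_1=1$, and, as $2^k$ is even for $k\ge1$, the sequence $(l_n\bmod 2)_{n\ge2}$ is the concatenation $\mu_1\mu_2\mu_3\cdots$, where $\mu_k$ denotes the word formed by the parities of the elements of $T_k$ written in increasing order.

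\emph{Step 2 (identifying $\mu_k$).} I claim $\mu_k=\Lambda_{k-1}$ for all $k\ge1$. For $k\ge2$, split $T_k$ by the leading digit of the length-$k$ word: the strings starting with $1$ are exactly $\{2^{k-1}+r:r\in T_{k-1}\}$ (prepending a $1$ neither creates nor shortens a $0$-block), while the strings starting with $0$ must start with $00$ and, after deleting this prefix, are exactly $T_{k-2}$. The first family lies in $[2^{k-1},2^k)$ and the second in $[0,2^{k-2})$, so in increasing order every element of the second precedes every element of the first; moreover deleting a leading $00$ does not change the integer, and for $k\ge2$ adding $2^{k-1}$ does not change a parity. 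Hence $\mu_k=\mu_{k-2}\mu_{k-1}$ for all $k\ge2$. A direct check gives $\mu_1=1=\Lambda_0$ and $\mu_2=01=\Lambda_1$, so by induction $\mu_k=\mu_{k-2}\mu_{k-1}=\Lambda_{k-3}\Lambda_{k-2}=\Lambda_{k-1}$ for every $k\ge3$, and the claim follows.

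\emph{Step 3 (conclusion).} By Steps~1 and~2, $(l_n\bmod 2)_{n\ge2}=\mu_1\mu_2\mu_3\cdots=\Lambda_0\Lambda_1\Lambda_2\cdots$, which by Theorem~\ref{fib_word} is the Fibonacci word with $0$ and $1$ interchanged; its entry in position $n-2$ is $1-\varphi_{n-2}$, so $l_n\bmod 2=1-\varphi_{n-2}$ for $n\ge2$. Together with $l_0\bmod 2=0$ and $l_1\bmod 2=1$, this is exactly the asserted formula.

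The delicate point is Step~2. Through the two embeddings $T_{k-1}\hookrightarrow T_k$ and $T_{k-2}\hookrightarrow T_k$ one has to control simultaneously the increasing order of the images — so that the parity words \emph{concatenate} instead of interleaving, which is precisely why one cuts on the leading digit and not on the trailing digit (there $2T_{k-1}+1$ and $4T_{k-2}$ would be interleaved) — and the parities, which survive only because prepending $00$ fixes the value while $2^{k-1}$ is even for $k\ge2$. The remaining ingredients — the description of $b_{2^k+r}$ and the base cases $\mu_1=\Lambda_0$, $\mu_2=\Lambda_1$ — are routine.
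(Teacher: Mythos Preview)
Your proof is correct and follows essentially the same approach as the paper: the paper works with the blocks $L_k=\{m\in[2^k,2^{k+1}):b_m=1\}=2^k+T_k$, derives the recurrence $L_k=(L_{k-2}+(2^k-2^{k-2}))\cup(L_{k-1}+2^k)$ by splitting on the two leading digits (exactly your Step~2 argument in shifted notation), and concludes $\mathbf{l}=01\Lambda_0\Lambda_1\Lambda_2\cdots$ before invoking Theorem~\ref{fib_word}. Your write-up is in fact more explicit about why the concatenation order and the parities are preserved under the two embeddings, points the paper leaves implicit.
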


\begin{proof} We define the family of sets $(L_n)_{n \in \mathbb{N}}$ in the following way:
$$L_n = \{ m \in \mathbb{N} : 2^n \leq m < 2^{n+1} \text{  and  } b_m = 1\}.$$

\noindent It is clear that for $k \in \mathbb{N}$, all the numbers in $L_k$ has exactly $k+1$ digits in their binary expansion. Let $k \geq 2$ and $m \in L_k$. The leading digits of $m$ are either $10$ or $11$. If they are equal to $11$, then $m$ has the form $m = 2^k + m'$ where $m' \in L_{k-1}$, and if they are equal to $10$, then $m = 2^k + (m'' - 2^{k-2})$ where $m'' \in L_{k-2}$. Therefore we have the following relation:
\begin{equation} \label{lk_eq}
L_k = (L_{k-2} + (2^k - 2^{k-2})) \cup (L_{k-1} + 2^k), \qquad k \geq 2.
\end{equation}

\vspace{0.2 cm}
\noindent Denote by ${\bf l}$ the infinite word such that the $n$-th letter of ${\bf l}$ is equal to $l_n \!\! \mod 2$. Then by equation (\ref{lk_eq}) and the definition of the sequence $(\Lambda_n)_{n \in \mathbb{N}}$ we have that
$${\bf l} = 01\Lambda_0\Lambda_1\Lambda_2 \dots$$

\noindent and by Theorem \ref{fib_word} we get the desired relation.
\end{proof}

\begin{thm} \label{baum_reg} The sequence $(l_n)_{n \in \mathbb{N}}$ is not $k$-regular for any $k \in \mathbb{N}$. \end{thm}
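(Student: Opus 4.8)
The plan is to argue by contradiction, exploiting the reduction-modulo-$m$ property of regular sequences together with the description of $(l_n \bmod 2)$ obtained in Theorem \ref{ln_mod2_eq}. Suppose $(l_n)_{n \in \mathbb{N}}$ were $k$-regular for some $k$. Then, by the closure property recalled in the introduction (a $k$-regular sequence reduced modulo any $m \geq 1$ is $k$-automatic; see \cite{REG}), taking $m = 2$ we would conclude that the sequence $(l_n \bmod 2)_{n \in \mathbb{N}}$ is $k$-automatic.

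Next I would transfer this automaticity to the Fibonacci word. By Theorem \ref{ln_mod2_eq}, for $n \geq 2$ we have $l_n \bmod 2 = 1 - \varphi_{n-2}$, so discarding the two initial terms gives $(l_{n+2} \bmod 2)_{n \in \mathbb{N}} = (1 - \varphi_n)_{n \in \mathbb{N}}$. Since $k$-automaticity is preserved both under passing from $(a_n)_{n \in \mathbb{N}}$ to $(a_{n+2})_{n \in \mathbb{N}}$ and under applying a fixed coding on the finite alphabet $\{0,1\}$ — here the involution $x \mapsto 1 - x$ — it would follow that $(\varphi_n)_{n \in \mathbb{N}}$ is $k$-automatic. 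This contradicts Lemma \ref{phi_naut}, according to which the Fibonacci word is not $k$-automatic for any $k$. Hence no such $k$ exists and $(l_n)_{n \in \mathbb{N}}$ is not $k$-regular.

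The argument is short because the substantial work has already been carried out: the combinatorial identification of $(l_n \bmod 2)$ with a coded shift of the Fibonacci word (Theorems \ref{fib_word} and \ref{ln_mod2_eq}) and the non-automaticity of the Fibonacci word via its irrational letter frequency (Lemma \ref{phi_naut}). The only point needing a little care is to justify that the two operations used in the transfer step — a constant shift of the index and a coding of the alphabet — genuinely preserve $k$-automaticity; both are standard facts (cf.\ \cite{AS}). If one prefers to avoid the shift altogether, one may instead observe that automaticity is unaffected by altering finitely many terms, so that $(l_n \bmod 2)$ being $k$-automatic and eventually equal to the letter-swapped Fibonacci word already forces the latter to be $k$-automatic.
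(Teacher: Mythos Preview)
Your argument is correct and is essentially the same as the paper's own proof: assume $k$-regularity, reduce mod $2$ to get $k$-automaticity, and then use Theorem \ref{ln_mod2_eq} together with the shift-invariance of automaticity to contradict Lemma \ref{phi_naut}. The only difference is that you spell out the coding $x\mapsto 1-x$ explicitly, whereas the paper leaves it implicit.
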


\begin{proof} Suppose that the sequence $(l_n)_{n \in \mathbb{N}}$ is regular for some $k \in \mathbb{N}$. Then the sequence $(l_n \!\! \mod 2)$ is $k$-automatic. However, this is in contradiction with Lemma \ref{phi_naut} and Theorem \ref{ln_mod2_eq}, since a shift on an automatic sequence is automatic.
\end{proof} 

\noindent In the next result, we consider the formal power series
$$\overline{B} = \sum_{n=0}^{\infty}b_nX^n \in \mathbb{C}[\![X]\!].$$

\noindent We start by finding an algebraic relation for the series $\overline{B}$. From the recurrence relations for the sequence $(b_n)_{n \in \mathbb{N}}$, we have
$$\begin{array}{c}
\displaystyle \overline{B}(X) = \sum_{n=0}^{\infty}b_nX^n = \sum_{n=0}^{\infty}b_{4n}X^{4n} + \sum_{n=0}^{\infty}b_{4n+2}X^{4n+2} + \sum_{n=0}^{\infty}b_{2n+1}X^{2n+1} = \\
\displaystyle = \sum_{n=0}^{\infty}b_nX^{4n} + \sum_{n=0}^{\infty}b_nX^{2n+1} = \overline{B}(X^4) + X\overline{B}(X^2).
\end{array} $$

\noindent Hence the algebraic relation has the form
\begin{equation} \label{b_complex_eq}
\overline{B}(X^4) + X\overline{B}(X^2) - \overline{B}(X) = 0.
\end{equation}

\noindent We have the following result:

\begin{thm} \label{b_transc} 
The series $\overline{B}$ is transcendental over $\mathbb{C}(X)$. 
\end{thm}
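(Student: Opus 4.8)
The plan is to show that algebraicity of $\overline{B}$ over $\mathbb{C}(X)$ would force $\overline{B}$ to be a rational function, and then to contradict that using Theorem~\ref{baum_01}. The key external input is the classical theorem of Fatou.

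I would begin by collecting elementary facts. Since $b_n \in \{0,1\}$ for every $n$, we have $\overline{B} \in \mathbb{Z}[\![X]\!]$, the coefficients are bounded so the radius of convergence is at least $1$, and $\overline{B}$ is not a polynomial: indeed $b_{4^k} = 1$ for all $k \ge 0$, because the binary expansion of $4^k = 2^{2k}$ is a single $1$ followed by $2k$ zeros, a block of even length. I would also record the standard descent step: a power series with rational coefficients that is algebraic over $\mathbb{C}(X)$ is already algebraic over $\mathbb{Q}(X)$, because requiring a polynomial $P(X,Y)$ of bounded bidegree to satisfy $P(X,\overline{B}(X))=0$ amounts to a homogeneous linear system in the coefficients of $P$ with rational (indeed integer) coefficients, and such a system has a nontrivial solution over $\mathbb{C}$ only if it has one over $\mathbb{Q}$.

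Next I would invoke Fatou's theorem: an element of $\mathbb{Z}[\![X]\!]$ with radius of convergence at least $1$ that is algebraic over $\mathbb{Q}(X)$ is necessarily a rational function. So if $\overline{B}$ were algebraic, then $\overline{B}=P/Q$ for some $P,Q\in\mathbb{Z}[X]$ with $Q(0)\neq 0$, and hence $(b_n)_{n\in\mathbb{N}}$ would satisfy a linear recurrence with constant coefficients of order $d:=\deg Q$. Since $(b_n)_{n\in\mathbb{N}}$ takes only two values, only finitely many windows $(b_n,b_{n+1},\dots,b_{n+d-1})$ could occur, and since each such window determines the next, the sequence would be eventually periodic: $b_{n+p}=b_n$ for all $n\ge N$, for some $p\ge1$ and $N$. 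By the first part of Theorem~\ref{baum_01}, $(b_n)_{n\in\mathbb{N}}$ contains a block of consecutive zeros of length $L>p$ starting at an index $m\ge N$ (the blocks produced there start at $5\cdot 2^k$ and have length $2^k$, so this holds for $k$ large). Then $\{m,m+1,\dots,m+L-1\}$ meets every residue class modulo $p$, so eventual periodicity forces $b_n=0$ for all $n\ge N$, contradicting $b_{4^k}=1$ for large $k$. Thus $\overline{B}$ is not rational, hence not algebraic, i.e.\ it is transcendental over $\mathbb{C}(X)$.

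The whole difficulty is concentrated in the appeal to Fatou's theorem, which is exactly what excludes the remaining possibility of $\overline{B}$ being algebraic without being rational; everything else is routine bookkeeping about linear recurrences and the combinatorics of the Baum--Sweet sequence. If a self-contained argument were preferred, in place of the eventual-periodicity step one could substitute a hypothetical $\overline{B}=P/Q$ into the Mahler-type functional equation~(\ref{b_complex_eq}) and reach a contradiction by comparing the orders of the poles at roots of unity; this works but involves more casework, so the argument sketched above is the one I would write up.
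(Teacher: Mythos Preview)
Your proof is correct and shares the paper's overall strategy: invoke Fatou's dichotomy to reduce transcendence to non-rationality, then rule out rationality. The difference lies in how non-rationality is established. The paper substitutes a putative representation $\overline{B}=f/g$ directly into the functional equation~(\ref{b_complex_eq}) and obtains a contradiction by comparing degrees (first forcing $\deg g=0$, then $\deg f=0$). You instead argue that rationality plus finitely many coefficient values forces eventual periodicity, and then use Theorem~\ref{baum_01} (arbitrarily long runs of zeros) together with $b_{4^k}=1$ to reach a contradiction. Your route has the pleasant feature of reusing the combinatorial Theorem~\ref{baum_01} and avoiding any computation with the Mahler equation; the paper's route is shorter, entirely self-contained from~(\ref{b_complex_eq}), and does not need the auxiliary descent from $\mathbb{C}(X)$ to $\mathbb{Q}(X)$ because the version of Fatou it quotes (finitely many coefficient values $\Rightarrow$ rational or transcendental over $\mathbb{C}(X)$) applies directly. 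You yourself note this alternative in your final paragraph; in fact the degree comparison is clean enough that no casework on roots of unity is needed.
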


\begin{proof} To prove the transcendence of $\overline{B}$, we can use the following theorem by Fatou (see \cite{FATOU}, for another proof see \cite{BC}): a power series whose coefficients take only finitely many values is either rational of transcendental. Hence, it is sufficient to prove that $\overline{B}$ is not a rational function.

\vspace{0.3 cm}
\noindent Assume that there are polynomials $f, g \in \mathbb{C}[X]$ such that $\overline{B}(X) = f(X)/g(X)$ and $g(X) \neq 0$. We may assume that $f$ and $g$ are coprime. From equation (\ref{b_complex_eq}), we obtain
$$f(X^4)g(X^2)g(X) + Xf(X^2)g(X^4)g(X) - f(X)g(X^4)g(X^2) = 0.$$

\noindent We thus have $g(X^4) {\mid} f(X^4)g(X^2)g(X)$. Since $f$ and $g$ are coprime, and therefore so are $f(X^4)$ and $g(X^4)$, we have that $g(X^4) {\mid} g(X^2)g(X)$. This implies that $4\deg g \leq 3\deg g$ and therefore $\deg g = 0$. Hence we have
$$f(X^4) = f(X) - Xf(X^2).$$

\noindent However, this implies that $4 \deg f \leq \max\{\deg f, \, 1 + 2\deg f\}$, which is true only if $\deg f = 0$. This is a contradiction, since $\overline{B}$ is not a constant power series.
\end{proof} 

\noindent Consider the formal power series
$$B = \sum_{n=0}^{\infty}b_nX^n \in \mathbb{F}_2[\![X]\!].$$

\noindent From equation (\ref{b_complex_eq}) we can obtain the algebraic relation for the series $B$. It has the form
\begin{equation} \label{b_eq}
B^4 + XB^2 + B = 0.
\end{equation}

\noindent Since $b_0 = 1$, the composition inverse of $B$ does not exist. Therefore, in order to proceed, we have to slightly change the sequence $(b_n)_{n \in \mathbb{N}}$. There are two natural ways to do it.

\section{Formal inverse of the sequence $(b_n')_{n \in \mathbb{N}}$}

\noindent The first possibility is to change the value of $b_0$ to $0$. We define a new sequence $(b_n')_{n \in \mathbb{N}}$ in the following way:
$$b_n' = \left\{\begin{array}{ll}
0 & \text{ if } n = 0,\\
b_n & \text{ if } n \geq 1.
\end{array} \right.$$

\vspace{0.2 cm}
\noindent Let $C = \sum_{n=0}^{\infty}b_n'X^n$. It is clear that $C = B + 1$. From equality (\ref{b_eq}), we get
\begin{equation} \label{c_eq}
X(C^2 + 1) + (C^4 + C) = 0.
\end{equation}

\noindent Let $P$ be the formal inverse of $C$. Then, after left composing equation (\ref{c_eq}) with $P$ and dividing the obtained equation by $(X + 1)$, we get
\begin{equation} \label{p_eq}
(X + 1)P + X^3 + X^2 + X = 0.
\end{equation}
 
\noindent Let $(p_n)_{n \in \mathbb{N}}$ be the sequence of coefficients of $P$. From equation (\ref{p_eq}), we have
$$\sum_{n=0}^{\infty}p_nX^{n+1} + \sum_{n=0}^{\infty}p_nX^n + X^3 + X^2 + X = 0.$$

\noindent Hence we have $p_0 = 0$, $p_1 = 1$, $p_2 = 0$, $p_3 = 1$ and $p_{n-1} + p_n = 0$ for $n \geq 4$. Therefore the sequence $(p_n)_{n \in \mathbb{N}}$ is ultimately constant and
$$p_n = \left\{\begin{array}{ll}
0 & \text{ if } n = 0 \text{ or } n = 2, \\
1 & \text{ otherwise.}
\end{array} \right.$$

\noindent Note that the obtained sequence has practically nothing in common with the original Baum--Sweet sequence. By Theorem \ref{baum_01} we know that the Baum-Sweet sequence is not ultimately periodic. The sequence $(p_n)_{n \in \mathbb{N}}$ has only two zero terms and an infinite string of consecutive $1$'s, which is in strong contrast with Theorem \ref{baum_01}. Furthermore, the characteristic sequence of $1$'s in the sequence $(p_n)_{n \in \mathbb{N}}$ is obviously $k$-regular for all $k \in \mathbb{N}$. 

\vspace{0.3 cm}
\noindent The sequence $(p_n)_{n \in \mathbb{N}}$ can be considered as a sequence with values in $\mathbb{C}$. We can then define the formal series
$$\overline{P} = \sum_{n=0}^{\infty}p_nX^n \in \mathbb{C}[\![X]\!].$$ 

\noindent Since the sequence $(p_n)_{n \in \mathbb{N}}$ is ultimately constant, we know that $\overline{P}$ is a rational function of the form
$$\overline{P} = \frac{X^3 - X^2 + X}{1 - X}.$$

\vspace{0.2 cm}
\noindent This should again be contrasted with a the similar result for the Baum--Sweet sequence stated in Theorem \ref{b_transc}.
 
\section{Formal inverse of the sequence $(b_n'')_{n \in \mathbb{N}}$} \label{sec_inv2}

\noindent In this subsection, we consider a different way to alter the Baum-Sweet sequence so as to obtain a sequence which has a formal inverse. Instead of changing the first term, we can simply shift all the terms of the sequence by one and add $0$ at the beginning. Formally speaking, we define a new sequence $(b_n'')_{n \in \mathbb{N}}$ which satisfies the following equality:
$$b_n'' = \left\{\begin{array}{ll}
0 & \text{ if } n = 0,\\
b_{n-1} & \text{ if } n \geq 1.
\end{array} \right.$$

\noindent Let $D = \sum_{n=0}^{\infty}b_n''X^n \in \mathbb{F}_2[\![X]\!]$. Then it is clear that $D = XB$. Computing an algebraic equation for $D$ using equation (\ref{b_eq}) in the same way as before, we obtain
\begin{equation} \label{d_eq}
X^3(D^2 + D) + D^4 = 0.
\end{equation}

\noindent Denote by $Q$ the composition inverse of $D$. As before, in order to obtain an algebraic equation for $Q$, we left compose equation (\ref{d_eq}) with $Q$. After dividing the obtained equation by $X$ and multiplying by $Q$, we get the equation
\begin{equation} \label{q_eq}
(X + 1)Q^4 + X^3Q = 0.
\end{equation}

\noindent Denote by $(q_n)_{n \in \mathbb{N}}$ the sequence of coefficients of $Q$. We can use equation (\ref{q_eq}) to find recurrence relations for the sequence $(q_n)_{n \in \mathbb{N}}$. We have the equality
$$\sum_{n=0}^{\infty}q_nX^{4n+1} + \sum_{n=0}^{\infty}q_nX^{4n} + \sum_{n=0}^{\infty}q_nX^{n+3} = 0.$$

\vspace{0.15 cm}
\noindent Hence, after comparing the coefficients, we obtain for $n \in \mathbb{N}$ the relations
\begin{equation} \label{q_recur_eq}
q_{4n} = q_{4n+3} = 0, \qquad q_{4n+1} = q_{4n+2} = q_{n+1}.
\end{equation}

\noindent Using equations (\ref{q_recur_eq}), we can compute all the terms of the sequence $(q_n)_{n \in \mathbb{N}}$ in terms of $q_1$. However, we know that $q_1 = 1$ since the formal series $Q$ is invertible. 

\vspace{0.3 cm}
\noindent From equations (\ref{q_recur_eq}) we can easily deduce that the $2$-kernel of the sequence $(q_n)_{n \in \mathbb{N}}$ has $5$ elements and the sequence is generated by the following automaton:

\vspace{0.3 cm}
\begin{figure}[ht!] 
\begin{center}
\begin{tikzpicture}[->, shorten >= 1pt, node distance=2.5 cm, on grid, auto]
	\node[state, inner sep=1pt, line width = 1.5pt] (c_1) {$0$};
	\node[state, inner sep=1pt] (c_2) [above right=of c_1] {$0$};
	\node[state, inner sep=1pt] (c_3) [below right=of c_1] {$1$};
	\node[state, inner sep=1pt] (c_4) [right=of c_2] {$0$};
	\node[state, inner sep=1pt] (c_5) [right=of c_3] {$1$};
	 \path[->]
	 (c_1) edge node {0} (c_2)
	       edge node [swap] {1} (c_3)
	 (c_2) edge node {0} (c_4)
	       edge [bend right] node {1} (c_5)
	 (c_3) edge node {0} (c_5)
	 	   edge [bend left] node {1} (c_4)
	 (c_4) edge [loop right] node {0, 1} (c_4)
	 (c_5) edge [bend left] node  {0, 1} (c_3);
\end{tikzpicture}
\vspace{-0.2 cm}
\caption{The automaton generating the sequence $(q_n)_{n \in \mathbb{N}}$. \label{qn_auto} } 
\end{center}
\end{figure}
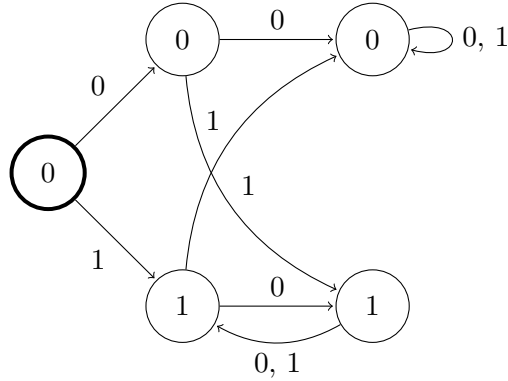

\noindent We now focus on the properties of the sequence $(q_n)_{n \in \mathbb{N}}$ and its characteristic sequence of $1$'s (i.e.\ the sequence of indices at which the sequence $(q_n)_{n \in \mathbb{N}}$ takes value $1$). First, note that the sequence $(q_n)_{n \in \mathbb{N}}$ has arbitrarily long series of consecutive $0$'s, since if $q_{n+1}, q_{n+2}, \dots, q_{n+k}$ is a string of $0$'s of length $k$, then by equations (\ref{q_recur_eq}), $q_{4n+1}, q_{4n+2}, \dots, q_{4n+4k}$ is a string of $0$'s of length $4k$. Furthermore, all the blocks of consecutive $1$'s have length $2$. These properties are similar to the analogous property of the Baum--Sweet sequence shown in Theorem \ref{baum_01}. 

\vspace{0.3 cm}
\noindent As before, we can consider a formal power series
$$\overline{Q} = \sum_{n = 0}^{\infty}q_nX^n \in \mathbb{C}[\![X]\!].$$

\noindent Using the recurrence relations (\ref{q_recur_eq}), we can easily obtain an algebraic relation for the series $\overline{Q}$. We have
\begin{equation} \label{q_complex_eq}
\begin{array}{c}
\displaystyle \overline{Q} = \sum_{i=0}^3 \sum_{n=0}^{\infty}q_{4n+i}X^{4n+i} = \sum_{n=0}^{\infty}q_{n+1}X^{4n+1} + \sum_{n=0}^{\infty}q_{n+1}X^{4n+2} = \vspace{0.1 cm}\\
\displaystyle = \frac{\overline{Q}(X^4)}{X^3} + \frac{\overline{Q}(X^4)}{X^2}
\end{array}
\end{equation}

\noindent where in the last equality we used the fact that $q_0 = 0$. Therefore, the equation above takes the form
$$\overline{Q}(X) = \frac{1 + X}{X^3}\overline{Q}(X^4).$$

\vspace{0.2 cm}
\noindent We have the following theorem.
\begin{thm} \label{q_transc}
The series $\overline{Q}$ is transcendental over $\mathbb{C}(X)$. \end{thm}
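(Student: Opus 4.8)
The plan is to mimic the proof of Theorem \ref{b_transc}: invoke Fatou's theorem, so that transcendence of $\overline{Q}$ over $\mathbb{C}(X)$ follows once we know $\overline{Q}$ is not a rational function. Since the coefficients $q_n$ take only the values $0$ and $1$, Fatou's dichotomy applies, and the whole argument reduces to ruling out rationality.

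\medskip

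So suppose for contradiction that $\overline{Q} = f/g$ with $f, g \in \mathbb{C}[X]$ coprime and $g \neq 0$. Substituting into the functional equation $X^3\,\overline{Q}(X) = (1 + X)\,\overline{Q}(X^4)$ and clearing denominators gives
\begin{equation*}
X^3 f(X) g(X^4) = (1 + X) f(X^4) g(X).
\end{equation*}
From here the strategy is a divisibility/degree analysis in the style of Theorem \ref{b_transc}. Comparing degrees: the left side has degree $3 + \deg f + 4\deg g$ and the right side has degree $1 + 4\deg f + \deg g$, so $3\deg g = 4\deg f - 2 - \ldots$; more precisely $3 + \deg f + 4\deg g = 1 + 4\deg f + \deg g$, i.e. $3\deg g + 2 = 3\deg f$, which forces $\deg f = \deg g + \tfrac{2}{3}$ — impossible unless I have miscounted, so the degree bookkeeping alone may already yield the contradiction. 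If it does not quite close (e.g. because one must be careful about the exact value of $q_1$ and the leading behaviour near $X = 0$, since $\overline{Q}$ has a zero of order $1$ at the origin), I would fall back on the coprimality argument: $g(X^4) \mid (1+X) f(X^4) g(X)$ together with $\gcd(f(X^4), g(X^4)) = 1$ forces $g(X^4) \mid (1+X) g(X)$, giving $4\deg g \leq 1 + \deg g$, hence $\deg g = 0$; and then $X^3 f(X) = (1+X) f(X^4)$ forces $3 + \deg f = 1 + 4\deg f$, i.e. $3\deg f = 2$, again impossible, and in any case incompatible with $f$ being a nonzero polynomial with $f(0) = 0$.

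\medskip

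The one genuine subtlety — and the step I expect to require the most care — is handling the factor $X^3$ in the denominator of the functional equation $\overline{Q}(X) = \frac{1+X}{X^3}\overline{Q}(X^4)$, since this is not a polynomial identity but an identity of formal power series (the apparent pole at $X = 0$ is cancelled because $\overline{Q}(X^4)$ vanishes to order $4$ there). After clearing denominators to get the polynomial identity $X^3 f(X) g(X^4) = (1+X) f(X^4) g(X)$ this issue disappears, but one should first record that $\overline{Q}$ is divisible by $X$ (indeed $q_0 = 0$, $q_1 = 1$), so that $f$ is divisible by $X$ and in particular $f \neq 0$, which is what makes the final degree inequality a genuine contradiction rather than the vacuous case $f = 0$. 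Everything else is the same routine degree/divisibility manipulation already carried out for $\overline{B}$, so I would keep the write-up short and point back to the proof of Theorem \ref{b_transc} for the shape of the argument.
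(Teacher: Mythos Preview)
Your proposal is correct and follows essentially the same approach as the paper: invoke Fatou's dichotomy (implicitly, via the reference to Theorem~\ref{b_transc}), write $\overline{Q}=f/g$ with $f,g$ coprime, clear denominators in the functional equation to get $X^3 f(X)g(X^4) = (1+X)f(X^4)g(X)$, and compare degrees to obtain $3\deg f - 3\deg g = 2$, an immediate contradiction. The paper's proof stops there---your fallback divisibility argument is unnecessary (the degree count alone already closes), and your discussion of the $X^3$ subtlety, while not wrong, is more caution than the paper exercises.
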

 
\begin{proof} We use the same method as in the proof of Theorem \ref{b_transc}. Assume that $\overline{Q}$ is a rational function. Therefore there exist coprime $f, g \in \mathbb{C}[X]$ with $g(X) \neq 0$ and $\overline{Q}(X) = f(X)/g(X)$. From equation (\ref{q_complex_eq}), we have
$$\frac{f(X)}{g(X)} = \frac{(1+X)f(X^4)}{X^3g(X^4)}.$$

\noindent Hence $X^3f(X)g(X^4) = (1+X)f(X^4)g(X)$. Comparing the degrees on both sides of this equation, we get
$$3 + 3\deg g = 1 + 3\deg f,$$ 

\noindent which gives a contradiction.
\end{proof}

\noindent Before we proceed, we recall the definition of the Moser--de Bruijn sequence $(m_n)_{n \in \mathbb{N}}$. It is defined as an increasing sequence of natural numbers $n$ such that $n$ can be expressed as a sum of distinct powers of $4$ (the latter property is equivalent to the fact that $n$ has only digits $0$ and $1$ in its base-$4$ expansion).

\vspace{0.3 cm}
\noindent It is clear that the sequence $(m_n)_{n \in \mathbb{N}}$ satisfies the following recurrence relations:
\begin{equation} \label{mn_recur_eq}
m_0 = 0, \qquad m_{2n} = 4m_n, \qquad m_{2n+1} = 4m_n + 1
\end{equation}

\noindent for all $n \in \mathbb{N}$. Hence the sequence $(m_n)_{n \in \mathbb{N}}$ is $2$-regular. It turns out that the characteristic sequence of $1$'s in the sequence $(q_n)_{n \in \mathbb{N}}$ is related with the Moser--de Bruijn sequence. We are going to show this relation in the following result.

\vspace{0.3 cm}
\noindent Let $(u_n)_{n \in \mathbb{N}}$ be the characteristic sequence of $1$'s in the sequence $(q_n)_{n \in \mathbb{N}}$, that is an increasing sequence satisfying the equality
$$\{u_n : n \in \mathbb{N}\} = \{m \in \mathbb{N} : q_m = 1\}.$$

\noindent The elements of the set $\{u_n : n \in \mathbb{N}\}$ can be characterized in terms of their base-$4$ expansion. It becomes clearly visible, when we change the automaton shown in Figure \ref{qn_auto} so that the input is represented in base $4$. The obtained automaton is shown in Figure \ref{qn_auto_2}. 

\vspace{0.2 cm}
\begin{figure}[ht!] 
\begin{center}
\begin{tikzpicture}[->, shorten >= 1pt, node distance=2.5 cm, on grid, auto]
	\node[state, inner sep=1pt, line width = 1.5pt] (c_1) {$0$};
	\node[state, inner sep=1pt] (c_2) [above right=of c_1] {$0$};
	\node[state, inner sep=1pt] (c_3) [below right=of c_1] {$1$};
	\path[->]
	(c_1) edge node {0, 3} (c_2)
	      edge node [swap] {1, 2} (c_3)
	(c_2) edge [loop right] node {0, 1, 2, 3} (c_2)
	(c_3) edge [loop right] node {0, 1} (c_3)
	      edge [bend right] node [swap] {2, 3} (c_2);
\end{tikzpicture}
\vspace{-0.2 cm}
\caption{Another automaton generating the sequence $(q_n)_{n \in \mathbb{N}}$. \label{qn_auto_2} } 
\end{center}
\end{figure}
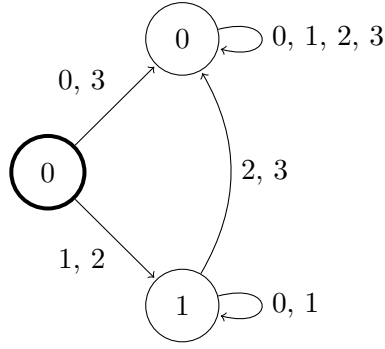

\noindent We have the following result.
\begin{thm} Let $n \in \mathbb{N}$. Then $q_n = 1$ if and only if the base-$4$ expansion of $n$ contains only digits $0$ and $1$, except for the last digit, which can be either $1$ or $2$. 
\end{thm}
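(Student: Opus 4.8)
The plan is to prove the characterization by analyzing the recurrence relations \eqref{q_recur_eq} directly, reading them as a statement about base-$4$ expansions. First I would observe that the recurrences naturally group indices into residue classes modulo $4$: for $i \in \{0,3\}$ we have $q_{4n+i}=0$, while for $i\in\{1,2\}$ we have $q_{4n+i}=q_{n+1}$. So appending a digit $d\in\{0,3\}$ to the base-$4$ expansion of some number kills the value, and appending $d\in\{1,2\}$ replaces $q$ at the new index by $q$ evaluated at (old number)$+1$. The appearance of the shift $n\mapsto n+1$ is the slightly awkward feature, so I would introduce the reindexed sequence and track how "add one then read off digits" behaves; concretely, writing $n$ in base $4$ as $n=(\,d_k d_{k-1}\cdots d_1 d_0\,)_4$, I want to peel off digits $d_0,d_1,\dots$ using \eqref{q_recur_eq} until I reach the base case.

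The key steps, in order, would be: (1) Establish by induction on $n$ that $q_n=1$ forces $d_0\in\{1,2\}$ (since $d_0\in\{0,3\}$ gives $q_n=0$ outright), so the last digit is $1$ or $2$ as claimed, and then $q_n=q_{n'+1}$ where $n'=\lfloor n/4\rfloor$. (2) Show that $q_m=1$ implies, by the same recurrences run forward, that every digit of $m$ except the last lies in $\{0,1\}$: here the cleanest route is to use the base-$4$ automaton in Figure~\ref{qn_auto_2}, reading a run through it. Starting from the initial state $0$, the first digit read is the \emph{most significant} one; to stay out of the absorbing state $0$ (the "reject" state in the middle of the picture — the one with the self-loop on all of $0,1,2,3$), the path must eventually sit in the bottom state labeled $1$, and the only way to reach and remain at that state reading the trailing digits is to read digits in $\{0,1\}$, with the final transition into it allowed via $\{1,2\}$ from the start state or via $\{2,3\}$ from the top state. (3) Conversely, verify that any $n$ whose base-$4$ word is of the form $(\,\text{string over }\{0,1\}\,)\,e$ with $e\in\{1,2\}$ is accepted, i.e.\ that $q_n=1$: this is the "if" direction and follows by tracing the same automaton, or by a direct induction peeling digits with \eqref{q_recur_eq} and using $q_1=1$ as the base case. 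Finally, (4) reconcile the two descriptions — the automaton has been asserted in the excerpt to generate $(q_n)$, but I would include a short justification that Figure~\ref{qn_auto_2} really is the base-$4$ rewriting of Figure~\ref{qn_auto}, since that equivalence is exactly what makes the digit reading legitimate.

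I expect the main obstacle to be the bookkeeping around the shift $n\mapsto n+1$ appearing in $q_{4n+1}=q_{4n+2}=q_{n+1}$, which means the naive statement "$q_{(d_k\cdots d_0)_4}$ depends only on the digits" is false as stated for the shifted index, and one must be careful that the characterization is about $n$ itself, not $n+1$; the carries produced by "$+1$" are precisely what is encoded by the two-state behavior (states $0$ and $1$ at the bottom, which track whether a carry is still "pending" in the sense of the digit being $0$ versus $1$). The automaton in Figure~\ref{qn_auto_2} is really the slick way to handle this, because it absorbs all the carry bookkeeping into its transition structure; so the honest work is the inductive argument that the recurrences \eqref{q_recur_eq} are faithfully implemented by that automaton, after which both directions of the \emph{iff} are immediate from the shape of the accepting paths (every accepting path ends in the bottom-right state, reachable only by reading a final digit in $\{1,2\}$ from the start or $\{2,3\}$ from the top state, and sustained only by digits in $\{0,1\}$).
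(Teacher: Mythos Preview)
Your overall strategy matches the paper's: the theorem is stated there as an immediate reading of the base-$4$ automaton in Figure~\ref{qn_auto_2}, with no further argument given. So reducing everything to that automaton is exactly right.

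However, two concrete errors in your description would derail the execution. First, the automaton in Figure~\ref{qn_auto_2} reads the \emph{least} significant digit first, not the most significant. This is forced by how it arises from the recurrences \eqref{q_recur_eq}: the initial state encodes $(q_n)$ itself, and on input $d\in\{0,1,2,3\}$ one passes to the state encoding $(q_{4n+d})$. Test this on $n=6=(12)_4$: reading MSB first would give $c_1\xrightarrow{1}c_3\xrightarrow{2}c_2$ with output $0$, but $q_6=q_2=q_1=1$; reading LSB first gives $c_1\xrightarrow{2}c_3\xrightarrow{1}c_3$ with output $1$, as it should. So the ``last digit'' in the statement (the units digit) is the \emph{first} digit processed, and the edge $c_1\to c_3$ on $\{1,2\}$ is what singles it out.

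Second, there is no transition ``via $\{2,3\}$ from the top state'' into the accepting state: the top state $c_2$ is absorbing, with a self-loop on all of $\{0,1,2,3\}$. The arrow labelled $\{2,3\}$ runs from $c_3$ \emph{to} $c_2$, not the reverse. Once you correct the reading direction and the automaton structure, the characterization is a one-line observation: the accepting runs are exactly those that take $c_1\to c_3$ on the first (least significant) digit in $\{1,2\}$ and then remain in $c_3$ via the self-loop on $\{0,1\}$. Your steps (1) and (3) collapse to this, and the shift-by-one bookkeeping you flagged in step (4) is precisely what the state $c_3$ (which tracks $q_{n+1}$ in terms of the remaining higher digits) already encodes --- no separate carry analysis is needed.
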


\noindent As a consequence, we get the following result.
\begin{cor} \label{un_mn_eq} For all $n \in \mathbb{N}$, we have $u_n = m_n + 1$.
\end{cor}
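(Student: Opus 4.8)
The plan is to combine the characterization of $\{m \in \mathbb{N} : q_m = 1\}$ given by the preceding theorem with an explicit description of the Moser--de Bruijn sequence in terms of base-$4$ expansions, and then to verify that $n \mapsto m_n + 1$ is an order-preserving bijection onto $\{m \in \mathbb{N} : q_m = 1\}$.

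First I would recall from the recurrences (\ref{mn_recur_eq}) that if $n$ has binary expansion $n = (\varepsilon_k \varepsilon_{k-1} \cdots \varepsilon_0)_2$ with $\varepsilon_i \in \{0,1\}$, then $m_n = (\varepsilon_k \varepsilon_{k-1} \cdots \varepsilon_0)_4$; that is, $m_n$ is obtained by reading the binary digits of $n$ as base-$4$ digits. In particular $m_n$ uses only the digits $0$ and $1$ in base $4$, and $n \mapsto m_n$ is a strictly increasing bijection from $\mathbb{N}$ onto the set $S$ of non-negative integers whose base-$4$ expansion uses only the digits $0$ and $1$.

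Next I would add $1$. Since the last base-$4$ digit of $m_n$ is $0$ or $1$, adding $1$ produces no carry, so the last digit of $m_n+1$ is $1$ or $2$ respectively, while all other digits, being those of $m_n$, remain in $\{0,1\}$. By the preceding theorem this is exactly the shape of an element of $\{m : q_m = 1\}$, hence $q_{m_n+1} = 1$ for every $n$. Conversely, if $q_m = 1$ then by the same theorem $m = (\delta_k \cdots \delta_1 \delta_0)_4$ with $\delta_i \in \{0,1\}$ for $i \geq 1$ and $\delta_0 \in \{1,2\}$; subtracting $1$ only lowers the last digit (to $0$ or $1$) and thus yields an element of $S$, i.e.\ $m - 1 = m_n$ for some $n$. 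Therefore $n \mapsto m_n + 1$ is a bijection from $\mathbb{N}$ onto $\{m : q_m = 1\}$, and it is strictly increasing because $(m_n)_{n \in \mathbb{N}}$ is. Since $(u_n)_{n \in \mathbb{N}}$ is by definition the unique strictly increasing enumeration of the same set, we conclude $u_n = m_n + 1$.

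Because everything reduces to elementary manipulations of base-$4$ digit strings, there is no real obstacle here; the only point demanding a little care is the "no carry" observation, which is precisely where the clause "the last digit can be either $1$ or $2$" in the theorem dovetails with the fact that the last base-$4$ digit of a Moser--de Bruijn number is $0$ or $1$. (Alternatively, one could derive the identity directly by induction from (\ref{q_recur_eq}) and (\ref{mn_recur_eq}), but the digit-based bijection argument seems cleaner.)
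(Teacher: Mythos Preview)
Your argument is correct and is exactly the reasoning the paper has in mind: the corollary is stated without proof, as an immediate consequence of the preceding base-$4$ digit characterization of $\{m:q_m=1\}$ together with the definition of the Moser--de~Bruijn sequence. You have simply made explicit the ``no carry'' bijection that the paper leaves to the reader.
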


\noindent We can now easily find a recurrence relations for the sequence $(u_n)_{n \in \mathbb{N}}$ using the above corollary and relations (\ref{mn_recur_eq}).

\begin{thm} \label{un_recur_eq} The sequence $(u_n)_{n \in \mathbb{N}}$ satisfies the following recurrence relations:
$$u_0 = 1, \qquad u_{2n} = 4u_n - 3, \qquad u_{2n+1} = 4u_n - 2.$$
\end{thm}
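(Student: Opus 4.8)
The plan is to derive the recurrence relations for $(u_n)_{n \in \mathbb{N}}$ directly from Corollary \ref{un_mn_eq} together with the recurrence relations (\ref{mn_recur_eq}) for the Moser--de Bruijn sequence. Since $u_n = m_n + 1$ for all $n \in \mathbb{N}$, the base case is immediate: $u_0 = m_0 + 1 = 0 + 1 = 1$. For the inductive clauses, I would simply substitute.

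First I would handle the even case. By Corollary \ref{un_mn_eq} and the middle relation in (\ref{mn_recur_eq}), for every $n \in \mathbb{N}$ we have
$$u_{2n} = m_{2n} + 1 = 4m_n + 1 = 4(u_n - 1) + 1 = 4u_n - 3.$$
Then for the odd case, using the last relation in (\ref{mn_recur_eq}),
$$u_{2n+1} = m_{2n+1} + 1 = 4m_n + 1 + 1 = 4(u_n - 1) + 2 = 4u_n - 2.$$
This establishes all three asserted identities.

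There is essentially no obstacle here; the statement is a routine translation of the Moser--de Bruijn recurrences through the affine substitution $m_n = u_n - 1$. The only point that requires the earlier machinery is Corollary \ref{un_mn_eq} itself (which in turn rests on the base-$4$ characterization of the indices with $q_n = 1$ read off from the automaton in Figure \ref{qn_auto_2}); once that is in hand, the computation above is the entire argument. One could alternatively avoid citing the corollary and instead verify the recurrences directly from the automaton of Figure \ref{qn_auto_2}, tracking how appending a base-$4$ digit to $n$ affects membership in $\{m : q_m = 1\}$, but going through $(m_n)_{n\in\mathbb{N}}$ is cleaner and I would present it that way. As an immediate remark, these relations confirm that $(u_n)_{n \in \mathbb{N}}$ is $2$-regular, since it is an affine shift of the $2$-regular sequence $(m_n)_{n \in \mathbb{N}}$.
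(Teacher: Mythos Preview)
Your proof is correct and follows exactly the approach indicated in the paper, which simply states that the recurrences are obtained from Corollary \ref{un_mn_eq} together with relations (\ref{mn_recur_eq}); you have merely written out the substitution explicitly.
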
 

\begin{cor} \label{un_2reg} The sequence $(u_n)_{n \in \mathbb{N}}$ is $2$-regular. \end{cor}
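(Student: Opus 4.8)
The plan is to derive the result directly from the recurrence relations in Theorem~\ref{un_recur_eq} together with the closure properties of regular sequences recalled in the introduction. Two routes are available, and I would present the self-contained one first.

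\emph{Route via the recurrences.} Let $M$ be the $\mathbb{Z}$-module of integer-valued sequences generated by $(u_n)_{n \in \mathbb{N}}$ and the constant sequence $(1)_{n \in \mathbb{N}}$. I claim that every element of the $2$-kernel $\mathcal{K}_2((u_n)_{n \in \mathbb{N}})$ lies in $M$, which immediately gives $2$-regularity since $M$ is generated by two sequences. The claim is proved by induction on $i$ in the typical kernel element $(u_{2^i n + j})_{n \in \mathbb{N}}$. For $i = 0$ the only such element is $(u_n)_{n \in \mathbb{N}}$ itself, which is in $M$ by definition. For the inductive step, assume $(u_{2^i n + j})_{n \in \mathbb{N}} = a(u_n)_{n \in \mathbb{N}} + b(1)_{n \in \mathbb{N}}$ for some $a, b \in \mathbb{Z}$. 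Splitting the index $2^{i+1} n + j'$ according to the new lowest bit and using $u_{2m} = 4u_m - 3$ and $u_{2m+1} = 4u_m - 2$ from Theorem~\ref{un_recur_eq}, one obtains $(u_{2^{i+1} n + j})_{n \in \mathbb{N}} = 4a(u_n)_{n \in \mathbb{N}} + (b - 3a)(1)_{n \in \mathbb{N}}$ and $(u_{2^{i+1} n + j + 2^i})_{n \in \mathbb{N}} = 4a(u_n)_{n \in \mathbb{N}} + (b - 2a)(1)_{n \in \mathbb{N}}$, and both right-hand sides lie in $M$. This closes the induction.

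\emph{Route via Corollary~\ref{un_mn_eq}.} Alternatively, one can argue even more briefly: by Corollary~\ref{un_mn_eq} we have $u_n = m_n + 1$, the Moser--de Bruijn sequence $(m_n)_{n \in \mathbb{N}}$ is $2$-regular by the recurrence relations (\ref{mn_recur_eq}) (as already noted), the constant sequence $(1)_{n \in \mathbb{N}}$ is trivially $2$-regular, and the class of $2$-regular sequences is closed under addition; hence $(u_n)_{n \in \mathbb{N}}$ is $2$-regular.

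As for the main obstacle: there is essentially none of substance here. The only point that deserves a moment's attention is checking that the coefficients appearing in the kernel recursion remain in $\mathbb{Z}$ (they do, being $4$, $-3$, $-2$), so that one genuinely obtains $2$-regularity over $\mathbb{Z}$ rather than only a weaker statement over $\mathbb{Q}$. I would keep the written proof to a couple of lines, most likely just invoking Corollary~\ref{un_mn_eq}, the $2$-regularity of the Moser--de Bruijn sequence, and the closure of regular sequences under addition.
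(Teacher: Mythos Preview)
Your proposal is correct and follows the paper's approach: the corollary is stated without proof as an immediate consequence of the recurrences in Theorem~\ref{un_recur_eq} (equivalently, of Corollary~\ref{un_mn_eq} and the already-noted $2$-regularity of $(m_n)$), and both of your routes simply spell out this inference. Your inductive verification that the $2$-kernel lies in the $\mathbb{Z}$-module spanned by $(u_n)$ and $(1)$ is accurate, including the coefficients $4a$, $b-3a$, $b-2a$.
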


\noindent In the following part, we will introduce and prove some properties of the sequences $(m_n)_{n \in \mathbb{N}}$ and $(u_n)_{n \in \mathbb{N}}$. We start with the following theorem, which describes the values of the sequence $(u_{n+1} - u_n)_{n \in \mathbb{N}}$.

\begin{thm} \label{un_differ_eq} We have the equality
$$\{u_{n+1} - u_n : n \in \mathbb{N}\} = \left\{ \frac{1}{3}(1 + 2 \cdot 4^k) : k \in \mathbb{N} \right\}.$$

\noindent Moreover, for each $k \in \mathbb{N}$ we have $u_{n+1} - u_n = \frac{1}{3}(1 + 2\cdot 4^k)$ if and only if $n$ has the form $n = (2m+1)2^k - 1$, $m \in \mathbb{N}$. 
\end{thm}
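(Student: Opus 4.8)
The plan is to reduce the statement to the behaviour of the Moser--de Bruijn sequence. By Corollary~\ref{un_mn_eq} we have $u_n = m_n + 1$, so $u_{n+1} - u_n = m_{n+1} - m_n$ for every $n$, and it suffices to describe the gaps of $(m_n)_{n \in \mathbb{N}}$ together with the set of indices at which each gap value occurs.

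First I would fix $n \in \mathbb{N}$ and let $k = k(n)$ denote the number of trailing $1$'s in the binary expansion of $n$ (so $k(n) = 0$ when $n$ is even). Since $m_n$ is just the binary expansion of $n$ read in base $4$, and incrementing $n$ turns the binary digits in positions $0, \dots, k-1$ from $1$ to $0$, turns the digit in position $k$ from $0$ to $1$, and leaves all higher digits fixed, applying the map that sends a binary string to its base-$4$ value gives
$$m_{n+1} - m_n \;=\; 4^k - \sum_{i=0}^{k-1} 4^i \;=\; 4^k - \frac{4^k-1}{3} \;=\; \frac{1}{3}\bigl(2\cdot 4^k + 1\bigr).$$
Thus $u_{n+1} - u_n = \tfrac13(2\cdot 4^{k(n)}+1)$. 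The ``moreover'' part is then a short congruence computation: $k(n) = k$ is equivalent to $n \equiv 2^k - 1 \pmod{2^{k+1}}$, i.e.\ to $n+1 = 2^k(2m+1)$ for some $m \in \mathbb{N}$, i.e.\ to $n = (2m+1)2^k - 1$. Finally, since $k(n)$ attains every value in $\mathbb{N}$ (for instance $k(2^k-1) = k$), the set of gaps is exactly $\{\tfrac13(1 + 2\cdot 4^k) : k \in \mathbb{N}\}$.

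An alternative route, if one prefers to avoid explicit base-$4$ expansions, is to prove the identity $u_{n+1} - u_n = \tfrac13(2\cdot 4^{k(n)}+1)$ by induction using Theorem~\ref{un_recur_eq}: for an even index, $u_{2s+1} - u_{2s} = (4u_s - 2) - (4u_s - 3) = 1$; for an odd index, $u_{2s+2} - u_{2s+1} = 4(u_{s+1}-u_s) - 1$, which sends the gap $\tfrac13(2\cdot 4^{k(s)}+1)$ to $\tfrac13(2\cdot 4^{k(s)+1}+1)$; and these match $k(2s)=0$ and $k(2s+1) = k(s)+1$ respectively, after which the set equality and the description of the indices follow as above.

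I do not anticipate a genuine obstacle: the argument is elementary once the reduction to $(m_n)_{n \in \mathbb{N}}$ is made. The only place requiring care is the bookkeeping — ensuring that the gap at index $n$ is governed by $k(n)$, the trailing-ones count of the \emph{smaller} index, rather than by a neighbouring index, and, in the inductive variant, correctly verifying the recursion $k(2s+1) = k(s)+1$.
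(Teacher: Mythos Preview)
Your proposal is correct. Your \emph{alternative} route---induction on the recurrences of Theorem~\ref{un_recur_eq}, splitting into $n$ even and $n$ odd---is exactly what the paper does (the paper phrases it as induction on $k$ after writing $n=(2m+1)2^k-1$, but this is the same computation). Your \emph{primary} route is a little different and arguably cleaner: rather than inducting, you exploit directly that $m_n$ is the binary string of $n$ reinterpreted in base~$4$, so incrementing $n$ in binary translates into an explicit base-$4$ change whose value you read off. This buys you a one-line closed-form derivation of the gap without any induction; the paper's route, by contrast, never needs to invoke the digit interpretation of $(m_n)$ and works purely from the recurrences. Either way the bookkeeping you flag (that $k(n)$ counts trailing $1$'s of the smaller index) is handled correctly.
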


\begin{proof} Since every $n \in \mathbb{N}$ has a unique representation of the form
$$n = (2m+1)2^k - 1 \qquad m, k \in \mathbb{N},$$ 

\noindent it is sufficient to prove that if $n = (2m + 1)2^k - 1$ for some $m, k \in \mathbb{N}$, then we have $u_{n+1} - u_n = \frac{1}{3}(1 + 2 \cdot 4^k)$. This statement can be proved, using induction on $k$ and the equations in Theorem \ref{un_recur_eq}. \end{proof}

\begin{rem} \label{allseq_eq} From the recurrence relations for the sequences $(t_n)_{n \in \mathbb{N}}$, $(b_n)_{n \in \mathbb{N}}$, $(m_n)_{n \in \mathbb{N}}$ and $(u_n)_{n \in \mathbb{N}}$, we can easily prove that for $n \in \mathbb{N}$, the following equations hold:

\vspace{-0.5 cm}
\begin{align*}
t_{m_{n}} &= t_n, \\
t_{u_{2n}} &= t_{u_{2n+1}} = 1 - t_n, \\
b_{m_{n}} &= \left\{ \begin{array}{ll} 1 & \text{ if } n = 0 \text{ or } n = 2^k, \, k \in \mathbb{N}, \\ 0 & \text{ otherwise,} \end{array} \right. \\
b_{u_{n}} &= \left\{ \begin{array}{ll} 1 & \text{ if } n = 0, \\ 0 & \text{ otherwise.}
\end{array} \right. 
\end{align*}
\end{rem}

\vspace{0.3 cm}
\noindent The properties of the sequence $(u_n)_{n \in \mathbb{N}}$ shown in Corollary \ref{un_2reg}, Theorem \ref{un_differ_eq} and Theorem \ref{allseq_eq} are quite similar to the analogous properties of the characteristic sequence of $1$'s in the sequence $(c_n)_{n \in \mathbb{N}}$, defined as a formal inverse of the Thue--Morse sequence. This suggests that these two sequences are somehow related. In the next part we are going to show such a relation.

\vspace{0.3 cm}
\noindent Denote by $(a_n)_{n \in \mathbb{N}}$ the characteristic sequence of $1$'s in the sequence $(c_n)_{n \in \mathbb{N}}$, with additional $0$ as the $0$-th term. More precisely, the sequence $(a_n)_{n \in \mathbb{N}}$ is an increasing sequence satisfying
$$\{a_n : n \in \mathbb{N}\} = \{m \in \mathbb{N} : c_m = 1\} \cup \{0\}.$$

\vspace{0.2 cm}
\noindent We have $a_0 = 0$, $a_1 = 1$, $a_2 = 2$ and $a_3 = 7$. It was shown in \cite{PTM} that the sequence $(a_n)_{n \in \mathbb{N}}$ is $2$-regular and it satisfies the following recurrence relations:
\begin{align} \label{an_recur_eq}
a_{4n} &= a_{4n-1} + 1, \vspace{0.1 cm} \nonumber \\
a_{4n+1} &= a_{4n-1} + 2, \vspace{0.1 cm} \nonumber \\
a_{4n+2} &= a_{4n-1} + 3, \vspace{0.1 cm} \\
a_{8n+3} &= a_{8n} + 7, \vspace{0.1 cm} \nonumber \\
a_{8n+7} &= 4a_{4n+3} + 3 \nonumber
\end{align}

\noindent for all $n \in \mathbb{N}$. It turns out that this sequence is also related to the Moser--de Bruijn sequence. More precisely, we have the following result.

\begin{thm} \label{an_mn_eq} For all $n \in \mathbb{N}$ we have the following relations:
$$a_{2n} = 2m_n, \qquad a_{2n+1} = 2m_{n+1} - 1.$$
\end{thm}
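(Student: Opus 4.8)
The plan is to prove Theorem~\ref{an_mn_eq} by induction, using the recurrence relations \eqref{an_recur_eq} for $(a_n)_{n \in \mathbb{N}}$ together with the recurrence relations \eqref{mn_recur_eq} for the Moser--de Bruijn sequence $(m_n)_{n \in \mathbb{N}}$. Both claimed identities, $a_{2n} = 2m_n$ and $a_{2n+1} = 2m_{n+1} - 1$, should be established simultaneously, since the recurrences for $(a_n)_{n \in \mathbb{N}}$ mix even- and odd-indexed terms at different scales. First I would verify the base cases from the known initial values $a_0 = 0$, $a_1 = 1$, $a_2 = 2$, $a_3 = 7$ and $m_0 = 0$, $m_1 = 1$, $m_2 = 4$: indeed $a_0 = 2m_0 = 0$, $a_1 = 2m_1 - 1 = 1$, $a_2 = 2m_2 = 2 \cdot \ldots$ — wait, $m_2 = 4$ so $2m_2 = 8 \neq 2$; so one must be careful and rather check $a_2 = 2m_1 = 2$ (using the indexing $a_{2n}=2m_n$ with $n=1$), and $a_3 = 2m_2 - 1 = 7$. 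These confirm the statement for small $n$.

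The inductive step splits according to the residue of the index modulo small powers of $2$. To compute $a_{2n}$ and $a_{2n+1}$ I would express $2n$ and $2n+1$ in the forms appearing on the left-hand sides of \eqref{an_recur_eq} (namely $4k$, $4k+1$, $4k+2$, $8k+3$, $8k+7$) by writing $n$ itself as $2k$ or $2k+1$ and then $k$ as $2j$ or $2j+1$ as needed, so that $2n$ and $2n+1$ fall into one of the five cases. For instance, if $n = 2k$ then $2n = 4k$ and $2n+1 = 4k+1$, so \eqref{an_recur_eq} gives $a_{2n} = a_{4k-1} + 1$ and $a_{2n+1} = a_{4k-1} + 2$; here $a_{4k-1} = a_{2(2k-1)+1} = 2m_{2k} - 1$ by the induction hypothesis, and $m_{2k} = 4m_k$ by \eqref{mn_recur_eq}, so $a_{2n} = 8m_k - 1 + 1 = 8m_k$, which must be shown equal to $2m_n = 2m_{2k} = 8m_k$ — consistent. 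Similarly $a_{2n+1} = 8m_k$, and this must equal $2m_{n+1} - 1 = 2m_{2k+1} - 1 = 2(4m_k+1) - 1 = 8m_k + 1$; this does not match, so in fact the case analysis must be finer — one needs to use the cases $8k+3$ and $8k+7$ as well, which means splitting $n$ modulo $4$ rather than modulo $2$, and possibly modulo higher powers. Concretely I would split on $n \bmod 4$ (giving $2n+1 \in \{8j+1, 8j+3, 8j+5, 8j+7\}$ and $2n \in \{8j, 8j+2, 8j+4, 8j+6\}$), route each $8j+1$ and $8j+5$ back through the $4k+1$ rule, each $8j+3$ through the fourth rule, each $8j+7$ through the fifth rule, and the even ones through the first three rules, reducing each time to $a$ at a smaller index and then applying the induction hypothesis and \eqref{mn_recur_eq}.

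The main obstacle I anticipate is bookkeeping: the fifth relation $a_{8n+7} = 4a_{4n+3} + 3$ scales the index by $4$, which matches the factor $4$ in $m_{2n} = 4m_n$, so the induction can be made to close, but the ``offset'' constants ($+1, +2, +3, +7, +3$) must be tracked precisely, and one has to make sure the reduction is to a strictly smaller index so the induction is well-founded (the $4a_{4n+3}+3$ rule reduces index $8n+7$ to $4n+3 < 8n+7$ for $n \geq 1$, which is fine). It may be cleanest to prove an equivalent pair of auxiliary identities first — for example, translate the claim into statements about the base-$4$ digits of $a_{2n}$ and $a_{2n+1}$ (using that $m_n$ has only digits $0,1$ in base $4$ and encodes the binary digits of $n$), and observe that $2m_n$ and $2m_{n+1}-1$ have transparent base-$4$ descriptions; then the recurrences \eqref{an_recur_eq} become digit-manipulation identities. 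Either way, once the case split is organized by $n \bmod 4$ (or $n \bmod 8$ where the $8n+3$, $8n+7$ rules force it), each case is a short computation combining one line of \eqref{an_recur_eq}, one line of the induction hypothesis, and one line of \eqref{mn_recur_eq}, and the verification is routine. I would close by remarking that this theorem, combined with Corollary~\ref{un_mn_eq} ($u_n = m_n + 1$), makes the promised connection between $(u_n)_{n \in \mathbb{N}}$ and $(a_n)_{n \in \mathbb{N}}$ explicit: both are simple affine images of the Moser--de Bruijn sequence.
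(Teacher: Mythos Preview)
Your approach is essentially the paper's: induction on $n$, a case split on the residue class of the index, and in each case one line from \eqref{an_recur_eq}, the induction hypothesis, and one line from \eqref{mn_recur_eq}. The mid-proposal alarm is a false one caused by an arithmetic slip: with $n=2k$ you have $a_{2n+1}=a_{4k+1}=a_{4k-1}+2=(2m_{2k}-1)+2=8m_k+1$, not $8m_k$, and this already matches $2m_{n+1}-1=2m_{2k+1}-1=8m_k+1$; so no finer split than the five patterns $4n,4n+1,4n+2,8n+3,8n+7$ in \eqref{an_recur_eq} is needed, and the paper runs through exactly those five cases.
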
 

\begin{proof} We use induction on $n$. The equations in the statement are true for $n = 0, 1, 2, 3$. From equations (\ref{an_recur_eq}) we get
\begin{align*}
a_{4n} &= a_{4n-1} + 1 = (2m_{2n} - 1) + 1 = 2m_{2n}, \vspace{0.1 cm} \\
a_{4n+1} &= a_{4n} + 1 = 2m_{2n} + 1 = 8m_n + 1 = 2(4m_n + 1) - 1 = 2m_{2n+1} - 1, \vspace{0.1 cm} \\
a_{4n+2} &= a_{4n+1} + 1 = 2m_{2n+1}, \vspace{0.1 cm} \\
a_{8n+3} &= a_{8n} + 7 = 2m_{4n} + 7 = 32m_n + 7 = 8(4m_n + 1) - 1 = 2m_{4n+2} - 1, \vspace{0.1 cm} \\
a_{8n+7} &= 4a_{4n+3} + 3 = 8m_{2n+2} - 4 + 3 = 2m_{4n+4} - 1, 
\end{align*}

\noindent and the desired equations are proved.
\end{proof}

\vspace{0.3 cm}
\noindent By the above theorem and Corollary \ref{un_mn_eq}, we get a relation between the sequences $(a_n)_{n \in \mathbb{N}}$ and $(u_n)_{n \in \mathbb{N}}$.

\begin{cor} \label{an_un_eq} For all $n \in \mathbb{N}$ we have that
$$a_{2n} = 2u_n - 2, \qquad a_{2n+1} = 2u_{n+1} - 3.$$
\end{cor}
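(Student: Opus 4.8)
The plan is to derive Corollary \ref{an_un_eq} directly by combining the two results it is advertised as following from, namely Theorem \ref{an_mn_eq} and Corollary \ref{un_mn_eq}. Both of these are already available at this point in the paper, so the argument is a short substitution and needs no new ideas.

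First I would recall Theorem \ref{an_mn_eq}, which gives $a_{2n} = 2m_n$ and $a_{2n+1} = 2m_{n+1} - 1$ for all $n \in \mathbb{N}$. Then I would invoke Corollary \ref{un_mn_eq}, which states $u_n = m_n + 1$, equivalently $m_n = u_n - 1$, for all $n \in \mathbb{N}$. Substituting $m_n = u_n - 1$ into the first relation gives $a_{2n} = 2(u_n - 1) = 2u_n - 2$, and substituting $m_{n+1} = u_{n+1} - 1$ into the second gives $a_{2n+1} = 2(u_{n+1} - 1) - 1 = 2u_{n+1} - 3$. This is exactly the claimed pair of identities, so the proof is complete.

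There is essentially no obstacle here: the statement is a formal corollary, and the only thing to be careful about is matching the index shifts correctly (the $m_{n+1}$ appearing in the odd case must be paired with $u_{n+1}$, not $u_n$). Since the paper presents this as a corollary, I would keep the write-up to one or two sentences rather than a displayed computation, along the following lines.

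\begin{proof}
By Corollary \ref{un_mn_eq}, we have $m_n = u_n - 1$ for all $n \in \mathbb{N}$. Substituting this into the relations $a_{2n} = 2m_n$ and $a_{2n+1} = 2m_{n+1} - 1$ from Theorem \ref{an_mn_eq}, we obtain $a_{2n} = 2u_n - 2$ and $a_{2n+1} = 2u_{n+1} - 3$, as claimed.
\end{proof}
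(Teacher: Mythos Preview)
Your proposal is correct and matches the paper's approach exactly: the paper also presents this corollary as an immediate consequence of Theorem \ref{an_mn_eq} and Corollary \ref{un_mn_eq}, with no additional argument given. Your one-line substitution is precisely what is intended.
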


\noindent This in an unexpected result. The sequences $(u_n)_{n \in \mathbb{N}}$ and $(a_n)_{n \in \mathbb{N}}$ are closely related despite the fact that they are characteristic sequences of $1$'s in formal inverses of two completely different sequences. 

\vspace{0.3 cm}
\noindent At first glance the Thue--Morse sequence and the Baum--Sweet sequence have nothing in common. In the Thue--Morse sequence, the frequency of $0$'s and $1$'s exists and is equal to $\frac{1}{2}$, we cannot have more than two consecutive $0$'s and two consecutive $1$'s and the characteristic sequences of $0$'s and $1$'s are both $2$-regular. By Theorem \ref{baum_01} and Theorem \ref{baum_reg}, we know that similar properties do not hold for the Baum--Sweet sequence.

\begin{rem} From the equations in Theorem \ref{an_mn_eq}, we can find another set of recurrence relations for the sequence $(a_n)_{n \in \mathbb{N}}$, based on equations (\ref{mn_recur_eq}). The obtained relations have the form

\vspace{-0.9 cm}
\begin{align*}
a_{4n} &= 4a_{2n}, \vspace{0.1 cm} \\
a_{4n+1} &= 4a_{2n} + 1, \vspace{0.1 cm} \\
a_{4n+2} &= 4a_{2n} + 2, \vspace{0.1 cm} \\
a_{4n+3} &= 4a_{2n+1} + 3. 
\end{align*}

\end{rem}
 
\vspace{0.3 cm}
\noindent Since we have a relation between the sequences $(a_n)_{n \in \mathbb{N}}$ and $(u_n)_{n \in \mathbb{N}}$, we can expect that they share even more properties. For example, it was shown in \cite{PTM} that we have
$$\liminf_{n \to \infty}\frac{a_n}{n^2} = \frac{1}{6}, \qquad \limsup_{n \to \infty}\frac{a_n}{n^2} = \frac{1}{2},$$

\noindent and that the set $\{a_n/n^2 : n \in \mathbb{N}\}$ is dense in $[\frac{1}{6}, \frac{1}{2}]$. From Corollary \ref{an_un_eq}, we can immediately get analogous property for the sequence $(u_n)_{n \in \mathbb{N}}$. \

\begin{thm} The following equalities hold:
$$\liminf_{n \to \infty}\frac{u_n}{n^2} = \frac{1}{3}, \qquad \limsup_{n \to \infty}\frac{u_n}{n^2} = 1.$$

\noindent Moreover, the set $\{u_n/n^2 : n \in \mathbb{N}\}$ is dense in $[\frac{1}{3}, 1]$.
\end{thm}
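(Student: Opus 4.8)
The plan is to transport the known asymptotic results for $(a_n)_{n \in \mathbb{N}}$ across the identities in Corollary \ref{an_un_eq}. Since $a_{2n} = 2u_n - 2$, we have $u_n = \tfrac12 a_{2n} + 1$, so
$$\frac{u_n}{n^2} = \frac{\tfrac12 a_{2n} + 1}{n^2} = 2 \cdot \frac{a_{2n}}{(2n)^2} + \frac{1}{n^2}.$$
The correction term $1/n^2$ tends to $0$, so along the even-index subsequence $(2n)$ of the $a$-sequence, $u_n/n^2$ behaves like $2 \cdot a_{2n}/(2n)^2$. Similarly, from $a_{2n+1} = 2u_{n+1} - 3$ we get $u_{n+1} = \tfrac12 a_{2n+1} + \tfrac32$, hence
$$\frac{u_{n+1}}{(n+1)^2} = 2 \cdot \frac{a_{2n+1}}{(2n+1)^2} \cdot \frac{(2n+1)^2}{4(n+1)^2} + \frac{3}{2(n+1)^2},$$
and since $(2n+1)^2/(4(n+1)^2) \to 1$ and $3/(2(n+1)^2) \to 0$, the odd-index values $u_{n+1}/(n+1)^2$ also behave asymptotically like $2 \cdot a_{2n+1}/(2n+1)^2$.

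First I would record these two asymptotic reductions precisely. The key point is that $\{a_n/n^2 : n \in \mathbb{N}\}$ decomposes according to the parity of the index, and the even- and odd-index pieces together exhaust the set, so every value $u_n/n^2$ (for $n$ large) is, up to a vanishing error and the factor $2$, a value of $a_m/m^2$ for $m$ of the corresponding parity. Conversely, every $a_m/m^2$ with $m$ large arises this way from some $u_n/n^2$. Hence $\liminf_{n\to\infty} u_n/n^2 = 2 \liminf_{m\to\infty} a_m/m^2 = 2 \cdot \tfrac16 = \tfrac13$ and $\limsup_{n\to\infty} u_n/n^2 = 2 \limsup_{m\to\infty} a_m/m^2 = 2 \cdot \tfrac12 = 1$, using the values quoted from \cite{PTM}.

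For the density statement, I would argue that the map $x \mapsto 2x$ carries the set of limit points of $\{a_n/n^2\}$ onto the set of limit points of $\{u_n/n^2\}$. Since $\{a_n/n^2 : n \in \mathbb{N}\}$ is dense in $[\tfrac16, \tfrac12]$, its closure is exactly $[\tfrac16, \tfrac12]$; multiplying by $2$, the closure of $\{u_n/n^2 : n \in \mathbb{N}\}$ contains $[\tfrac13, 1]$. The reverse inclusion follows from the $\liminf$ and $\limsup$ computation, which shows all values eventually lie arbitrarily close to $[\tfrac13, 1]$. One must be slightly careful that the finitely many small-index terms of $(u_n)$ do not produce values outside $[\tfrac13, 1]$ or, if they do, they are isolated and do not affect density of the set in that interval; a direct check of the first few terms (using $u_0 = 1$, $u_1 = 2$, etc.) handles this.

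I do not anticipate a serious obstacle here: the whole argument is a transfer of results already established in \cite{PTM} through the exact linear relations of Corollary \ref{an_un_eq}, and the only technical care needed is the bookkeeping of the lower-order error terms $1/n^2$, $3/(2(n+1)^2)$ and the ratio $(2n+1)^2/(4(n+1)^2) \to 1$, all of which are routine. The mild subtlety — and the one place I would slow down — is ensuring that the even-index and odd-index subfamilies of $(a_n)$ individually have the same $\liminf$, $\limsup$, and density behavior as the full sequence, so that nothing is lost by restricting to them; this follows from the structural recurrences (\ref{an_recur_eq}) or equivalently from Theorem \ref{an_mn_eq}, which expresses both parities in terms of the single sequence $(m_n)$.
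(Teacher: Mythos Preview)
Your proposal is correct and follows exactly the route the paper takes: the theorem is stated as an immediate consequence of Corollary~\ref{an_un_eq} together with the results on $(a_n)$ from \cite{PTM}, and your write-up simply fills in the asymptotic bookkeeping the paper leaves implicit. Your one stated worry --- whether the even- and odd-index subfamilies of $(a_n)$ separately inherit the full $\liminf$/$\limsup$/density behavior --- is in fact unnecessary, since the two identities in Corollary~\ref{an_un_eq} together set up an asymptotic bijection between \emph{all} indices $m$ of $(a_m)$ and indices of $(u_n)$, so no restriction to a parity class is ever required.
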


\noindent It is natural to expect that we have a similar relation between the characteristic sequences of $0$'s in the sequences $(q_n)_{n \in \mathbb{N}}$ and $(c_n)_{n \in \mathbb{N}}$ as well. We denote these sequences by $(v_n)_{n \in \mathbb{N}}$ and $(d_n)_{n \in \mathbb{N}}$, respectively. Since we added $0$ to the sequence $(a_n)_{n \in \mathbb{N}}$, we exclude it from the sequence $(d_n)_{n \in \mathbb{N}}$. The sequences $(v_n)_{n \in \mathbb{N}}$ and $(d_n)_{n \in \mathbb{N}}$ are both increasing and they satisfy the following equalities:
$$\begin{array}{c}
\{ v_n : n \in \mathbb{N}\} = \{m \in \mathbb{N} : q_m = 0\}, \vspace{0.1 cm} \\
\{ d_n : n \in \mathbb{N}\} = \{m \geq 1 : c_m = 0\}.
\end{array}.$$

\noindent By definition, we have $\mathbb{N} = \{a_n : n \in \mathbb{N}\} \cup \{d_n : n \in \mathbb{N}\}$ and these sets are disjoint. Moreover, we have
$$\begin{array}{c}
\mathbb{N} = \{2u_n - 2 : n \in \mathbb{N}\} \cup \{2u_n - 3 : n \geq 1\} \, \cup \vspace{0.1 cm} \\
\cup \, \{2v_n - 2 : n \geq 1\} \cup \{2v_n - 3 : n \geq 1\}, \end{array}$$

\noindent and these sets are pairwise disjoint. By the relations from Corollary \ref{an_un_eq}, we know that the elements from the first two sets are the terms of $(a_n)_{n \in \mathbb{N}}$. Therefore the remaining sets contain the terms of $(d_n)_{n \in \mathbb{N}}$. We thus have the following result:

\begin{lem} \label{dn_vn_eq} For all $n \in \mathbb{N}$ we have
$$d_{2n} = 2v_{n+1} - 3, \qquad d_{2n+1} = 2v_{n+1} - 2.$$
\end{lem}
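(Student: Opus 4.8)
The plan is to exploit the four-fold decomposition of $\mathbb{N}$ that has already been established in the paragraph preceding the statement, and then merely to sort. Recall that we have displayed the partition
$$\mathbb{N} = \{2u_n - 2 : n \in \mathbb{N}\} \cup \{2u_n - 3 : n \geq 1\} \cup \{2v_n - 2 : n \geq 1\} \cup \{2v_n - 3 : n \geq 1\}$$
into four pairwise disjoint blocks, that by Corollary \ref{an_un_eq} the first two blocks together form exactly the range $\{a_n : n \in \mathbb{N}\}$, and that $\{a_n : n \in \mathbb{N}\}$ and $\{d_n : n \in \mathbb{N}\}$ partition $\mathbb{N}$. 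First I would combine these three facts to conclude that
$$\{d_n : n \in \mathbb{N}\} = \{2v_n - 2 : n \geq 1\} \cup \{2v_n - 3 : n \geq 1\},$$
again a disjoint union; after that the only remaining task is to order this set and read off the two subsequences.

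To order it, I would use that $(v_n)_{n \in \mathbb{N}}$ is strictly increasing, so that $v_{n+1} \geq v_n + 1$ for every $n$. This gives, for each $n \geq 1$, the chain
$$2v_n - 3 < 2v_n - 2 \leq 2v_{n+1} - 4 < 2v_{n+1} - 3 < 2v_{n+1} - 2,$$
which shows that the increasing enumeration of $\{2v_n - 3 : n \geq 1\} \cup \{2v_n - 2 : n \geq 1\}$ is
$$2v_1 - 3, \; 2v_1 - 2, \; 2v_2 - 3, \; 2v_2 - 2, \; 2v_3 - 3, \; 2v_3 - 2, \; \ldots,$$
i.e.\ the element in position $2n$ equals $2v_{n+1} - 3$ and the element in position $2n+1$ equals $2v_{n+1} - 2$. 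Since $(d_n)_{n \in \mathbb{N}}$ is by definition this increasing enumeration, the claimed formulas $d_{2n} = 2v_{n+1} - 3$ and $d_{2n+1} = 2v_{n+1} - 2$ follow at once.

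I do not expect any serious difficulty here: the substantive work — producing the four-block partition of $\mathbb{N}$ and recognising two of its blocks as the range of $(a_n)_{n \in \mathbb{N}}$ — has been carried out already. The only step that needs a moment's attention is checking that adjacent blocks do not interleave, that is, the inequality $2v_n - 2 < 2v_{n+1} - 3$; this is precisely where strict monotonicity of $(v_n)_{n \in \mathbb{N}}$, equivalently $v_{n+1} - v_n \geq 1$, enters, and it is immediate.
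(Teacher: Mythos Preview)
Your proposal is correct and follows exactly the approach the paper takes: the paper derives the lemma from the same four-block partition of $\mathbb{N}$ displayed just before the statement, together with Corollary \ref{an_un_eq}, and then simply announces the result. Your write-up is in fact more complete than the paper's, since you spell out the ordering argument (the chain $2v_n - 3 < 2v_n - 2 < 2v_{n+1} - 3$ from strict monotonicity of $(v_n)$) that the paper leaves implicit.
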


\noindent It was shown in \cite{PTM} that the sequence $(d_n)_{n \in \mathbb{N}}$ is not $k$-regular for any $k$. Hence, from the above lemma, we can get analogous result for the sequence $(v_n)_{n \in \mathbb{N}}$.

\begin{thm} The sequence $(v_n)_{n \in \mathbb{N}}$ is not $k$-regular for any $k$.
\end{thm}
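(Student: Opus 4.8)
The plan is to reduce non-regularity of $(v_n)_{n \in \mathbb{N}}$ to the known non-regularity of $(d_n)_{n \in \mathbb{N}}$ (established in \cite{PTM}) using the relations in Lemma \ref{dn_vn_eq}. The key point is that the operations relating the two sequences — passing to the subsequences of even and odd indices, and applying affine transformations $x \mapsto 2x - 3$, $x \mapsto 2x - 2$ — all preserve (non-)regularity, so the implication goes in both directions.

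Concretely, I would argue as follows. Suppose for contradiction that $(v_n)_{n \in \mathbb{N}}$ is $k$-regular for some $k \geq 2$. The class of $k$-regular sequences is a $\mathbb{Z}$-module and contains all constant sequences, so the sequences $(2v_{n+1} - 3)_{n \in \mathbb{N}}$ and $(2v_{n+1} - 2)_{n \in \mathbb{N}}$ are $k$-regular (here I first use that a shift $(v_{n+1})_{n \in \mathbb{N}}$ of a $k$-regular sequence is $k$-regular, a property recalled in the introduction, and then that $\mathbb{Z}$-linear combinations with constant sequences stay regular). By Lemma \ref{dn_vn_eq}, these are exactly the subsequences $(d_{2n})_{n \in \mathbb{N}}$ and $(d_{2n+1})_{n \in \mathbb{N}}$. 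But the introduction records the converse closure property: if $(d_{pn+q})_{n \in \mathbb{N}}$ is $k$-regular for $p \geq 2$ and all $q \in \{0, 1, \dots, p-1\}$, then $(d_n)_{n \in \mathbb{N}}$ is $k$-regular. Applying this with $p = 2$ yields that $(d_n)_{n \in \mathbb{N}}$ is $k$-regular, contradicting the result from \cite{PTM}. Hence $(v_n)_{n \in \mathbb{N}}$ is not $k$-regular for any $k \geq 2$, and for $k < 2$ the notion is not defined, so the claim holds for all $k$.

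There is essentially no serious obstacle here; the proof is a short formal deduction. The only point requiring a little care is making sure every closure property invoked is exactly one of those listed in the introduction (shift $(a_{pn+q})$, the module structure, and the "gluing" converse), and that constant sequences are regular — which follows since they lie in the module generated by the single sequence $(1)_{n \in \mathbb{N}}$, whose $k$-kernel is a single element. One should also note explicitly that the indices match up: Lemma \ref{dn_vn_eq} covers $d_{2n}$ and $d_{2n+1}$ for all $n \in \mathbb{N}$, which is precisely the full set of residues mod $2$, so the converse closure property applies without gaps.
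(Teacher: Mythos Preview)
Your proposal is correct and follows exactly the same approach as the paper, which simply writes ``Follows immediately from Lemma~\ref{dn_vn_eq}.'' You have spelled out in detail the closure properties of $k$-regular sequences that make this immediate, but the underlying argument---deducing regularity of $(d_n)$ from hypothetical regularity of $(v_n)$ via Lemma~\ref{dn_vn_eq} and then invoking the non-regularity result from \cite{PTM}---is identical.
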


\begin{proof} Follows immediately from Lemma \ref{dn_vn_eq}. \end{proof}

\section{Generalizations}
\noindent For a given natural number $r \geqslant 2$, we consider a sequence $(b_n^{(r)})_{n \in \mathbb{N}}$ defined by the following rule: we have $b_n^{(r)} = 1$ if all the blocks of $0$'s in the binary expansion of $n$ have length divisible by $r$ and $b_n^{(r)} = 0$ otherwise, with $b_0^{(r)} = 1$. For $r = 2$, we recover the original Baum--Sweet sequence.

\vspace{0.3 cm}
\noindent By definition, it is clear that the sequence $(b_n^{(r)})_{n \in \mathbb{N}}$ is $2$-automatic and satisfies the following recurrence relations:
\begin{equation} \label{bnr_recur_eq}\begin{array}{c}
b_0^{(r)} = 1, \qquad b_{2^rn}^{(r)} = b_{2n+1}^{(r)} = b_n^{(r)}, \vspace{0.1 cm} \\
b_{2^in + 2^{i-1}}^{(r)} = 0, \quad i = 2, 3, \dots, r. \end{array}
\end{equation}

\noindent Moreover, it can be shown that the $2$-kernel of the sequence $(b_n^{(r)})$ consists of $r+1$ subsequences, namely
$$(b_n^{(r)})_{n \in \mathbb{N}},\, (b_{2n}^{(r)})_{n \in \mathbb{N}}, \dots, (b_{2^{r-1}n}^{(r)})_{n \in \mathbb{N}},\, (b_{4n+2}^{(r)})_{n \in \mathbb{N}}.$$

\noindent We can expect that properties of the sequence $(b_n^{(r)})_{n \in \mathbb{N}}$ for a given $r \geq 2$ are similar to properties of the original Baum--Sweet sequence. In this section, we will show that this is indeed the case. We start with the following result.

\begin{thm}
The sequence $(b_n^{(r)})_{n \in \mathbb{N}}$ contains arbitrarily long sequences of consecutive $0$'s and the maximal number of consecutive $1$'s is equal to $2$.
\end{thm}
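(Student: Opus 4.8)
The structure should mirror the proof of Theorem \ref{baum_01}, which handles the case $r=2$, with only minor adjustments to the arithmetic. For the first part (arbitrarily long runs of $0$'s), I would exhibit an explicit family of indices. Take $n = (2^{r-1}+1)2^k = 2^{r-1+k} + 2^k$ for $k \in \mathbb{N}$; its binary expansion is $1\underbrace{0\cdots 0}_{r-2}1\underbrace{0\cdots 0}_{k}$. The leading part $1\underbrace{0\cdots 0}_{r-2}1$ already contains a block of $0$'s of length $r-2$, which is not divisible by $r$ (for $r \geq 3$; for $r=2$ this reduces to the argument already given with the block $101\underbrace{0\cdots0}{}$ in the original proof, so one may simply state the generalized construction and note the $r=2$ case is Theorem \ref{baum_01}). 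Hence $b_n^{(r)} = b_{n+1}^{(r)} = \dots = b_{n+2^k-1}^{(r)} = 0$, since prepending this fixed prefix $1\underbrace{0\cdots0}_{r-2}1$ to any suffix of length $k$ still leaves that bad block present. Letting $k \to \infty$ gives the first claim. (A cleaner choice avoiding the $r=2$ edge case: use the prefix $1\underbrace{0\cdots0}_{r-1}1\underbrace{0\cdots0}_{r-1}1$ which contains two blocks of length $r-1$; but the first construction is simplest.)

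For the second part — that one cannot have three consecutive $1$'s, while two do occur — I would again isolate the key parity lemma: \emph{if $b_n^{(r)} = b_{n+1}^{(r)} = 1$ and $n > 0$, then $n$ is odd}. Granting this, three consecutive $1$'s at positions $n, n+1, n+2$ would force both $n$ and $n+1$ odd, which is impossible; and the existence of exactly two consecutive $1$'s follows by exhibiting, say, $n = 2^r - 1$ (binary $\underbrace{1\cdots1}_{r}$) and $n+1 = 2^r$ (binary $1\underbrace{0\cdots0}_{r}$), for which both values are $1$. To prove the lemma, suppose $n > 0$ is even and $b_n^{(r)} = 1$. Write $n = m \cdot 2^j$ with $m$ odd and $j \geq 1$; since $b_n^{(r)} = 1$, the trailing block of $j$ zeros must satisfy $r \mid j$, so in particular $j \geq r \geq 2$. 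Then $n+1$ has binary expansion $(m)_2\underbrace{0\cdots0}_{j-1}1$, and since $r \nmid j-1$ (because $r \mid j$ and $r \geq 2$), this trailing block of $j-1$ zeros is bad, forcing $b_{n+1}^{(r)} = 0$, a contradiction. This is a direct transcription of the $r=2$ argument using the recurrence relations (\ref{bnr_recur_eq}).

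**Anticipated obstacle.** The only genuinely delicate point is the boundary behavior in the first part when $r = 2$ (the prefix $1\underbrace{0\cdots0}_{r-2}1$ degenerates to $11$, which contains no zero block), so I would either handle $r=2$ by appeal to Theorem \ref{baum_01} and treat $r \geq 3$ with the construction above, or — more uniformly — replace the construction by $n = (2^{2r-1} + 2^r + 1)2^k$ whose binary expansion $1\underbrace{0\cdots0}_{r-1}1\underbrace{0\cdots0}_{r-1}1\underbrace{0\cdots0}_{k}$ contains a $0$-block of length $r-1$ (not divisible by $r$ since $r \geq 2$) regardless of $r$. Beyond that, everything is a routine unwinding of the definition of $(b_n^{(r)})_{n \in \mathbb{N}}$ and the recurrences (\ref{bnr_recur_eq}); no new idea is needed over the $r=2$ case.
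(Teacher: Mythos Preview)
Your proof is correct and follows exactly the template the paper intends (the paper gives no separate proof, merely stating that the argument is analogous to Theorem~\ref{baum_01}). Two small remarks: first, the alternative uniform construction has a typo --- the number with binary expansion $1\underbrace{0\cdots0}_{r-1}1\underbrace{0\cdots0}_{r-1}1$ is $2^{2r}+2^r+1$, not $2^{2r-1}+2^r+1$; second, you can avoid the $r=2$ edge case entirely by observing that the \emph{original} choice $n=5\cdot 2^k$ with prefix $101$ already works for every $r\geq 2$, since a single zero is never divisible by $r$.
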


\noindent The proof of this result is analogous to the proof of Theorem \ref{baum_01}.

\vspace{0.3 cm}
\noindent Let $(l_n^{(r)})_{n \in \mathbb{N}}$ be the characteristic sequence of $1$'s in the sequence $(b_n^{(r)})_{n \in \mathbb{N}}$, that is an increasing sequence satisfying the equality
$$\left\{l_n^{(r)} : n \in \mathbb{N} \right\} = \left\{m \in \mathbb{N} : b_m^{(r)} = 1\right\}.$$

\begin{thm}
The sequence $(l_n^{(r)})_{n \in \mathbb{N}}$ is not $k$-regular for any $k \geq 2$.
\end{thm}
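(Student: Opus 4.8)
The plan is to mimic the argument used for the classical case $r=2$ (Theorem \ref{baum_reg}): if $(l_n^{(r)})_{n \in \mathbb{N}}$ were $k$-regular, then $(l_n^{(r)} \!\!\mod 2)_{n \in \mathbb{N}}$ would be $k$-automatic, and we derive a contradiction by exhibiting a letter in this $\pm 1$-valued sequence whose frequency is irrational. The natural candidate frequency comes from a Fibonacci-type substitution: the blocks of $1$'s in the binary expansions of consecutive valid indices obey a linear recurrence of order $r$, which should force the density of $0$'s (equivalently of $1$'s) in the parity sequence to be an algebraic irrational — specifically the reciprocal of the dominant root of $x^r = x^{r-1} + 1$ (for $r=2$ this is the golden ratio). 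Since automatic sequences with a well-defined letter frequency must have rational frequency \cite[Theorem 8.4.5(b)]{AS}, this yields the claim.

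First I would set up the analogue of the sets $L_k$: let $L_k^{(r)} = \{m : 2^k \le m < 2^{k+1},\ b_m^{(r)} = 1\}$, i.e.\ the valid indices with exactly $k+1$ binary digits. Analyzing the leading digits of such an $m$ via the recurrences (\ref{bnr_recur_eq}), every valid $m$ with leading block $1^j0\cdots$ (for $j \ge 1$) must have its first $0$-block of length a multiple of $r$; peeling off the leading $1^j$ and the following $0^r$ (or just the leading $1$) gives a decomposition of $L_k^{(r)}$ in terms of $L_{k-1}^{(r)}$ and $L_{k-r-1}^{(r)}$ (roughly: either the leading digit is $1$ followed by another valid number of one fewer digit, or the number begins $10^r$ and then continues as a valid number of $k-r$ fewer digits, after appropriate index shifts). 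Reading off the parities and concatenating over $k$, the parity word $\mathbf{l}^{(r)}$ becomes (up to a fixed prefix) the fixed point of a substitution $\varphi^{(r)}$ on $\{0,1\}$ whose incidence matrix has characteristic polynomial $x^r - x^{r-1} - 1$; for $r=2$ this recovers the interchanged Fibonacci word of Theorem \ref{fib_word}. The frequency of each letter in this fixed point exists (the substitution is primitive) and equals a coordinate of the normalized Perron eigenvector, which is irrational because the Perron root is an algebraic integer of degree $r$ that is not an integer (it lies strictly between $1$ and $2$).

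The main obstacle is the combinatorial bookkeeping in the $L_k^{(r)}$ decomposition: unlike the $r=2$ case, where the two leading-digit cases ($11$ and $10$) map cleanly onto $L_{k-1}$ and $L_{k-2}$, for general $r$ one has to be careful that a number beginning $10^j$ with $1 \le j < r$ is automatically invalid unless the block extends to length exactly a multiple of $r$, so the recursion really does skip from $L_{k-1}$ to $L_{k-r-1}$ with no intermediate terms, and the index shifts (the additive constants $2^k - 2^{k-1}$ etc.) must be tracked to confirm that concatenation respects the substitution structure. Once the substitution and its primitivity are in hand, the frequency computation and the appeal to \cite[Theorem 8.4.5(b)]{AS} are routine, exactly paralleling Lemma \ref{phi_naut}, Theorem \ref{ln_mod2_eq} and Theorem \ref{baum_reg}. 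I would therefore organize the proof as: (i) state and prove the $L_k^{(r)}$ recursion; (ii) identify the resulting substitution $\varphi^{(r)}$ and show $\mathbf{l}^{(r)}$ is its fixed point; (iii) compute the letter frequency and show it is irrational; (iv) conclude via the rationality-of-frequencies theorem for automatic sequences that $(l_n^{(r)} \!\!\mod 2)$ is not automatic, hence $(l_n^{(r)})$ is not $k$-regular for any $k$.
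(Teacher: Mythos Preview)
Your overall strategy --- show that the parity sequence $(l_n^{(r)} \bmod 2)$ has an irrational letter frequency, hence cannot be $k$-automatic --- is exactly the paper's strategy, and your claimed frequency (the reciprocal of the Perron root of $x^r-x^{r-1}-1$, equivalently the unique root of $x^r+x-1$ in $(0,1)$) agrees with the paper's. There are, however, two problems with your implementation.

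The minor one is an indexing slip in the $L_k^{(r)}$ recursion. Your description ``begins $10^r$ and then continues as a valid number'' is not a bijection onto $L_{k-r-1}^{(r)}$, because the tail after $10^r$ can itself begin with further zeros (e.g.\ $m=10^{2r}1\cdots$). The correct bijection --- matching the $r=2$ case you cite --- replaces the leading $10^r$ by a single $1$, mapping $L_k^{(r)}$ (leading bits $10$) onto $L_{k-r}^{(r)}$, not $L_{k-r-1}^{(r)}$. Since you do write down the polynomial $x^r-x^{r-1}-1$, which corresponds to this corrected recursion, this may just be a typo.

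The substantive gap is step (ii). You assert that $\mathbf{l}^{(r)}$ is (up to a prefix) the fixed point of a substitution $\varphi^{(r)}$ on the two-letter alphabet $\{0,1\}$ whose incidence matrix has characteristic polynomial $x^r-x^{r-1}-1$. For $r\ge 3$ this is impossible on the face of it: a substitution on a two-letter alphabet has a $2\times 2$ incidence matrix and hence a quadratic characteristic polynomial. Concretely, for $r=3$ the block parity words are $\Lambda_0=\Lambda_1=1$ but $\Lambda_2=01$, so no morphism on $\{0,1\}$ can send $\Lambda_n$ to $\Lambda_{n+1}$. The paper circumvents this by passing to the $r$-letter alphabet $\{x_0,\dots,x_{r-1}\}$: it defines words $\Delta_n$ with $\Delta_n=\Delta_{n-r}\Delta_{n-1}$ and a morphism $\varphi(x_i)=\Delta_i$, so that $\varphi(\Delta_n)=\Delta_{n+1}$; the $r$-th iterate $\mu=\varphi^r$ is primitive, which is what guarantees \emph{existence} of the letter frequencies (via \cite[Theorem 8.4.7]{AS}). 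Only after this does one apply the coding $\psi\colon x_0\mapsto 0,\ x_i\mapsto 1\ (i>0)$ to recover the parity word and compute the frequency as a root of $x^r+x-1$. Without this enlarged alphabet (or some substitute argument), you have neither a substitution fixed-point description nor a proof that the frequency of $1$'s in $\mathbf{l}^{(r)}$ exists, and the appeal to \cite[Theorem 8.4.5(b)]{AS} is premature.
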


\begin{proof} Let $\Sigma = \{x_0, x_1, \dots, x_{r-1}\}$ be an alphabet with $r$ letters. Define $(\Delta_n)_{n \in \mathbb{N}}$ as the sequence of words such that $\Delta_i = x_{i+1}$ for $0 \leq i < r-1$, $\Delta_{r-1} = x_0x_{r-1}$ and $\Delta_n = \Delta_{n-r}\Delta_{n-1}$ for $n \geq r$. Let $\varphi : \Sigma^\star \rightarrow \Sigma^\star$ be a morphism of monoids such that $\varphi(x_i) = \Delta_i$. Then it is easy to verify that we have $\varphi(\Delta_n) = \Delta_{n+1}$ for all $n \in \mathbb{N}$. Moreover, we have the equality
$$\Delta_0\Delta_1\dots\Delta_{n+1} = x_1\varphi(\Delta_0\Delta_1\dots\Delta_n).$$

\noindent As a consequence, we get that
\begin{equation} \label{delta_eq}
\Delta_0\Delta_1\Delta_2\hdots = x_1\varphi(\Delta_0\Delta_1\Delta_2\dots).
\end{equation}

\noindent The morphism $\varphi$ does not have a fixed point. Consider the $r$-th iterate $\mu = \varphi^r$ of $\varphi$. We have that

\vspace{-1 cm}
\begin{align*}
\mu(x_0) &= x_0x_{r-1}, \\
\mu(x_1) &= x_1x_0x_{r-1}, \\
& \;\; \vdots \\
\mu(x_{r-1}) &= x_{r-1}\dots x_1x_0x_{r-1}.
\end{align*}

\noindent Hence the morphism $\mu$ has exactly $r$ fixed points. We are interested in frequencies of letters in the fixed points of $\mu$. Applying $\varphi$ to equation (\ref{delta_eq}), we get that
$$\Delta_0\Delta_1\Delta_2\hdots = x_1x_2\dots x_{r-1}x_0x_{r-1}\mu(\Delta_0\Delta_1\Delta_2\dots),$$

\noindent which implies that the infinite word $\Delta_0\Delta_1\Delta_2\hdots$ can be written in the form
$$\Delta_0\Delta_1\Delta_2 = w\mu(w)\mu^2(w)\mu^3(w)\dots,$$

\noindent where $w = x_1x_2\dots x_{r-1}x_0x_{r-1}$. The morphism $\mu$ is primitive (there exists $k \geq 1$ such that for all $i, j \in \{0, 1, \dots, r-1\}$, $x_i$ occurs in $\mu^k(x_j)$) and therefore in every fixed point of $\mu$ the frequencies of all letters exist and are nonzero and they do not depend on the choice of a fixed point (\hspace{-0.2pt}{\cite[Theorem 8.4.7]{AS}}). Hence, in the infinite word $\Delta_0\Delta_1\Delta_2\hdots$, the frequencies of all letters exist and are nonzero.

\vspace{0.3 cm}
\noindent Denote by ${\bf l}_r$ the infinite word such that the $n$-th letter of ${\bf l}_r$ is equal to $l_n^{(r)} \!\!\! \mod 2$. Using the same method as in the proof of Theorem \ref{ln_mod2_eq}, we get that
$${\bf l}_r = 01\psi(\Delta_0\Delta_1\Delta_2\dots),$$

\noindent where $\psi : \Sigma^\star \rightarrow \{0,1\}^\star$ is a coding such that $\psi(x_0) = 0$ and $\psi(x_i) = 1$ for $0 < i < r$. In order to prove that the sequence $(l_n^{(r)})_{n \in \mathbb{N}}$ is not $k$-regular for any $k \geq 2$, it is sufficient to prove that the frequency of letters in the above infinite word is irrational (\hspace{-0.2pt}{\cite[Theorem 8.4.5]{AS}}). Using an easy induction, we get that
$$\psi(\Delta_n\Delta_{n-1}\dots\Delta_0)01 = \psi(\Delta_{n+r})$$ 

\noindent for all $n \in \mathbb{N}$ and therefore the word $01\psi(\Delta_0\Delta_1\dots\Delta_n)$ can be created from $\psi(\Delta_{n+r})$ after rearranging the letters. Since the frequency of $1$'s in the word $01\psi(\Delta0\Delta1\dots)$ exists, so does the limit
$$g = \lim_{n\to\infty}\frac{|\psi(\Delta_n)|_1}{|\psi(\Delta_n)|}.$$

\vspace{0.2 cm}
\noindent It is easy to check that we have $|\psi(\Delta_n)| = |\psi(\Delta_{n+1})|_1$ for all $n \in \mathbb{N}$. Hence, from the recurrence relations for the sequence $(\Delta_n)_{n \in \mathbb{N}}$, we get that $g$ is a root of the polynomial $x^r + x - 1$. This polynomial has only one root in $(0,1)$ and it is an irrational number, which completes the proof.
\end{proof}

\noindent In the following part, we consider a formal power series
$$\overline{B_r} = \sum_{n=0}^{\infty}b_n^{(r)}X^n \in \mathbb{C}[\![X]\!],$$

\noindent From equations (\ref{bnr_recur_eq}), we obtain the equality
\begin{equation} \label{br_complex_eq}
\overline{B_r}(X^{2^r}) + X\overline{B_r}(X^2) - \overline{B_r}(X) = 0.
\end{equation}

\noindent
We can now use the equation (\ref{br_complex_eq}) and the method used in the proof of Theorem \ref{b_transc} to prove the following result.

\begin{thm} The series $\overline{B_r}$ is transcendental over $\mathbb{C}(X)$.
\end{thm}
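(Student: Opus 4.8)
The plan is to follow the template of Theorem \ref{b_transc} almost verbatim. By Fatou's theorem, since $\overline{B_r}$ has coefficients in the finite set $\{0,1\}$, it is either rational or transcendental over $\mathbb{C}(X)$; so it suffices to rule out rationality. First I would assume $\overline{B_r} = f/g$ with $f,g \in \mathbb{C}[X]$ coprime and $g \neq 0$, and clear denominators in equation (\ref{br_complex_eq}) to obtain
$$f(X^{2^r})g(X^2)g(X) + Xf(X^2)g(X^{2^r})g(X) - f(X)g(X^{2^r})g(X^2) = 0.$$

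Next I would extract the divisibility information. From the displayed identity, $g(X^{2^r})$ divides $f(X^{2^r})g(X^2)g(X)$; since $f$ and $g$ are coprime, so are $f(X^{2^r})$ and $g(X^{2^r})$, hence $g(X^{2^r}) \mid g(X^2)g(X)$. Comparing degrees gives $2^r \deg g \le 3 \deg g$, which forces $\deg g = 0$ because $r \ge 2$ means $2^r \ge 4 > 3$. So $\overline{B_r}$ is a polynomial, say $\overline{B_r} = f$, satisfying $f(X^{2^r}) = f(X) - Xf(X^2)$. Comparing degrees one more time yields $2^r \deg f \le \max\{\deg f,\, 1 + 2\deg f\}$; for $r \ge 2$ this is impossible unless $\deg f = 0$, contradicting the fact that $\overline{B_r}$ is non-constant (indeed $b_2^{(r)} = 0 \ne 1 = b_0^{(r)}$). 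This contradiction finishes the proof.

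There is essentially no obstacle here beyond bookkeeping: the only point requiring the slightest care is confirming that the inequalities $2^r \le 3$ and $2^r \le \max\{1, 1+2\}$ both fail for every $r \ge 2$, which is immediate since $2^r \ge 4$. The argument is uniform in $r$ and needs no case analysis, so I would present it compactly, citing \cite{FATOU} (or \cite{BC}) for Fatou's dichotomy and pointing to the proof of Theorem \ref{b_transc} as the model to avoid repeating the routine degree computations in full detail.
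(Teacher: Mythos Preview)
Your proposal is correct and matches the paper's approach exactly: the paper simply says to use equation (\ref{br_complex_eq}) together with the method from the proof of Theorem \ref{b_transc}, and you have spelled out precisely that argument with the appropriate constants replaced by $2^r$. The degree bookkeeping and the appeal to Fatou's dichotomy are handled correctly, and noting $2^r \ge 4$ for $r \ge 2$ is all that is needed to make the inequalities go through uniformly.
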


\noindent In the following part, we will focus on the formal inverse of the sequence $(b_n^{(r)})_{n \in \mathbb{N}}$. We consider a formal power series
$$B_r = \sum_{n=0}^{\infty}b_n^{(r)}X^n \in \mathbb{F}_2[\![X]\!].$$

\noindent Using the recurrence relations for the sequence $(b_n^{(r)})_{n \in \mathbb{N}}$, we easily obtain the following equality:
\begin{equation} \label{br_eq}
B_r^{2^r} + XB_r^2 + B_r = 0.
\end{equation}

\vspace{0.2 cm}
\noindent We have $b_0^{(r)} = 1$, hence the series $B_r$ does not have a formal inverse. As before, we consider the series $C_r = B_r + 1$ and $D_r = XB_r$. These series are both invertible. Denote by $P_r$ and $Q_r$ the formal inverses of the series $C_r$ and $D_r$, respectively. Let $(p_n^{(r)})_{n \in \mathbb{N}}$ be the sequence of coefficients of $P_r$ and let $(q_n^{(r)})_{n \in \mathbb{N}}$ be the sequence of coefficients of $Q_r$. 

\vspace{0.3 cm} 
\noindent We start with the series $C_r$. From equation (\ref{br_eq}), we get
$$C_r^{2^r} + XC_r^2 + C_r + X = 0.$$

\noindent Hence, after left composing the above equation with $P_r$, we obtain the algebraic relation for the series $P_r$:
\begin{equation} \label{pr_eq}
(X^2 + 1)P_r + X^{2^r} + X = 0.
\end{equation}

\vspace{0.15 cm}
\noindent From equation (\ref{pr_eq}), we can determine all terms of the sequence $(p_n^{(r)})_{n \in \mathbb{N}}$. More precisely, we have the following theorem.

\begin{thm} We have the equality
$$p_n^{(r)} = \left\{\begin{array}{ll}
0 & \text{ if } 2 {\mid} n \text{ and } n < 2^r, \\
1 & \text{ otherwise.}
\end{array} \right.$$
\end{thm}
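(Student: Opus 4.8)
The plan is to read off a simple two-term recurrence for the sequence $(p_n^{(r)})_{n \in \mathbb{N}}$ directly from the algebraic relation (\ref{pr_eq}) and then to unwind it. Since all the computations take place in $\mathbb{F}_2[\![X]\!]$, equation (\ref{pr_eq}) is equivalent to $(X^2+1)P_r = X^{2^r} + X$. Writing $P_r = \sum_{n=0}^{\infty} p_n^{(r)} X^n$ and expanding the left-hand side as $\sum_{n} p_n^{(r)} X^{n+2} + \sum_{n} p_n^{(r)} X^n$, I would compare the coefficient of $X^n$ on both sides. With the convention $p_{-2}^{(r)} = p_{-1}^{(r)} = 0$, this yields, for every $n \in \mathbb{N}$,
$$
p_n^{(r)} + p_{n-2}^{(r)} = \begin{cases} 1 & \text{if } n = 1 \text{ or } n = 2^r, \\ 0 & \text{otherwise.} \end{cases}
$$

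Next I would solve this recurrence separately along the odd and the even indices. Because $r \geq 2$, the exceptional index $2^r$ is even, so for odd $n$ the right-hand side above is nonzero only when $n = 1$; hence $p_1^{(r)} = 1$ and $p_n^{(r)} = p_{n-2}^{(r)}$ for all odd $n \geq 3$, which forces $p_n^{(r)} = 1$ for every odd $n$. For even $n$ one starts from $p_0^{(r)} = 0$; as long as $n < 2^r$ the right-hand side vanishes, so $p_n^{(r)} = p_{n-2}^{(r)} = \dots = p_0^{(r)} = 0$. At $n = 2^r$ the recurrence gives $p_{2^r}^{(r)} = p_{2^r-2}^{(r)} + 1 = 0 + 1 = 1$, and for even $n > 2^r$ the right-hand side again vanishes, so $p_n^{(r)} = p_{n-2}^{(r)} = \dots = p_{2^r}^{(r)} = 1$. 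Combining the two cases gives precisely the claimed formula: $p_n^{(r)} = 0$ exactly when $n$ is even and $n < 2^r$, and $p_n^{(r)} = 1$ otherwise.

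There is no serious obstacle here; the argument is an elementary coefficient comparison followed by a finite induction. The one point that requires a little care is the bookkeeping at the boundary index $n = 2^r$: one must check that when descending by steps of $2$ from an even $n > 2^r$, the exceptional index $2^r$ is hit exactly once, at the very end, and never earlier, which holds because $n$ and $2^r$ have the same parity. (One can also note that $p_1^{(r)} = 1$ is forced independently by the fact that $P_r$ is a compositional inverse of a series with linear coefficient $1$, but the recurrence already delivers this.)
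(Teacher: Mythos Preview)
Your proof is correct and follows exactly the approach the paper intends: the paper does not spell out a proof for general $r$, but it states that the formula follows directly from equation (\ref{pr_eq}), and in the case $r=2$ it carries out precisely the coefficient comparison you perform (there with a one-step rather than two-step recurrence, since the factor is $X+1$ instead of $X^2+1$). Your separation into odd and even indices and the handling of the single exceptional coefficient at $n=2^r$ are the natural way to finish the computation.
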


\begin{rem} Consider the power series $\overline{P_r} = \sum_{n=0}^{\infty}p_n^{(r)}X^n \in \mathbb{C}[\![X]\!].$ Since the sequence $(p_n^{(r)})_{n \in \mathbb{N}}$ is ultimately constant, the series $\overline{P_r}$ is a rational function. Using the above theorem, we can compute that
$$\overline{P_r} = \frac{1}{1-X}\sum_{k=1}^{2^r-1}(-1)^{k-1}X^k.$$
\end{rem}

\noindent We now consider the series $D_r$. Again, from equation (\ref{br_eq}) we have that
$$XD_r^{2^r} + X^{2^r}D_r^2 + x^{2^r}D_r = 0.$$

\noindent Left composing the above equation with $Q_r$, we obtain
$$(X+1)Q_r^{2^r} + X^{2^r-1}Q_r = 0.$$

\noindent After comparing the coefficients at $X^{2^r(n+1)+i}$, $i = -1, 0, 1, \dots, 2^r-2$, we get that the sequence $(q_n^{(r)})_{n \in \mathbb{N}}$ satisfies the following recurrence relations:
$$q_{2^rn+1}^{(r)} = q_{2^rn+2}^{(r)} = q_n^{(r)}, \qquad q_{2^rn + i}^{(r)} = 0, \;\; i = 0, 3, 4, \dots, 2^r-1.$$

\noindent Hence the sequence $(q_n^{(r)})_{n \in \mathbb{N}}$ is generated by the automaton shown in Figure \ref{qnr_auto}. The input is represented in base $2^r$.
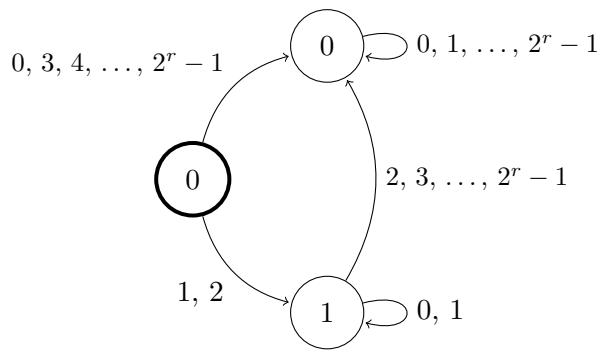
\begin{figure}[ht!] 
\begin{center}
\begin{tikzpicture}[->, shorten >= 1pt, node distance=2.5 cm, on grid, auto]
	\node[state, inner sep=1pt, line width = 1.5pt] (c_1) {$0$};
	\node[state, inner sep=1pt] (c_2) [above right=of c_1] {$0$};
	\node[state, inner sep=1pt] (c_3) [below right=of c_1] {$1$};
	\path[->]
	(c_1) edge [bend left] node {\small 0, 3, 4, $\dots$, $2^r-1$} (c_2)
	      edge [bend right] node [swap] {1, 2} (c_3)
	(c_2) edge [loop right] node {\small 0, 1, $\dots$, $2^r-1$} (c_2)
	(c_3) edge [loop right] node {0, 1} (c_3)
	      edge [bend right] node [swap] {\small 2, 3, $\dots$, $2^r-1$} (c_2);
\end{tikzpicture}
\vspace{-0.2 cm}
\caption{The automaton generating the sequence $(q_n^{(r)})_{n \in \mathbb{N}}$. \label{qnr_auto} } 
\end{center}
\end{figure}

\begin{rem} Consider a sequence $\overline{Q_r} = \sum_{n=0}^{\infty}q_n^{(r)}X^n \in \mathbb{C}[\![X]\!]$. From recurrence relations for the sequence $(q_n^{(r)})_{n \in \mathbb{N}}$ we get that 
$$\overline{Q_r}(X) = \frac{1+X}{X^{2^r-2}}\overline{Q_r}(X^{2^r}).$$

\noindent Moreover, by using the same method as in the proof of Theorem \ref{q_transc}, we can prove that $\overline{Q_r}$ is transcendental over $\mathbb{C}(X)$.
\end{rem}

\vspace{0.2 cm}
\noindent Let $(u_n^{(r)})_{n \in \mathbb{N}}$ be the characteristic sequence of $1$'s in the sequence $(q_n^{(r)})_{n \in \mathbb{N}}$. In the following part, we are going to introduce some properties of this sequence. Let us recall that in the case $r = 2$, the characteristic sequence of $1$'s is closely related to the Moser--de Bruijn sequence. It turns out that we can find such a relation in the general case as well. 

\vspace{0.3 cm}
\noindent We define a sequence $(m_n^{(r)})_{n \in \mathbb{N}}$ as an increasing sequence of natural numbers $n$ such that $n$ can be written as a sum of distinct powers of $2^r$ (in other words, $n$ has only digits $0$ and $1$ in its base-$2^r$ expansion). It is clear that in the case $r = 2$, this sequence is the Moser--de Bruijn sequence.

\vspace{0.3 cm}
\noindent The sequence $(m_n^{(r)})_{n \in \mathbb{N}}$ satisfies the following recurrence relations:
\begin{equation} \label{mnr_recur_eq}
m_0^{(r)} = 0, \qquad m_{2n}^{(r)} = 2^rm_n^{(r)}, \qquad m_{2n+1}^{(r)} = 2^rm_n^{(r)} + 1.
\end{equation}

\noindent From the automaton shown in Figure \ref{qnr_auto}, we get that $q_n^{(r)} = 1$ if and only if $n$ has only digits $0$ and $1$ in its base-$2^r$ expansion, except for the last digit, which can be either $1$ or $2$. As a consequence, we obtain the following result.

\begin{thm} \label{unr_mnr_eq} We have $u_n^{(r)} = m_n^{(r)} + 1$ for $n \in \mathbb{N}$.
\end{thm}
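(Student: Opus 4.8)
The plan is to prove the two statements $q_n^{(r)}=1 \iff$ the base-$2^r$ expansion of $n$ has all digits in $\{0,1\}$ except possibly the last which lies in $\{1,2\}$, and the identity $u_n^{(r)} = m_n^{(r)}+1$, simultaneously by relating both sides to the base-$2^r$ characterization. The cleanest route is to first establish the characterization of $\{n : q_n^{(r)}=1\}$ from the automaton in Figure \ref{qnr_auto} (or directly from the recurrence relations $q_{2^r n+1}^{(r)} = q_{2^r n+2}^{(r)} = q_n^{(r)}$ and $q_{2^r n+i}^{(r)}=0$ for $i \in \{0,3,4,\dots,2^r-1\}$), and then count.

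First I would make the base-$2^r$ characterization precise. Reading a number $n>0$ with base-$2^r$ expansion $(d_{s}d_{s-1}\cdots d_1 d_0)_{2^r}$, the recurrence says: peeling off the last digit $d_0$, we have $q_n^{(r)} = q_{\lfloor n/2^r\rfloor}^{(r)}$ when $d_0 \in \{1,2\}$, and $q_n^{(r)} = 0$ when $d_0 \in \{0,3,4,\dots,2^r-1\}$; while for $n'=\lfloor n/2^r\rfloor > 0$, the condition $q_{n'}^{(r)}=1$ propagates only through digits equal to $1$ (since $q_{2^r m + 2}^{(r)} = q_m^{(r)}$ would require the next-to-last digit to be $2$, but applying the recurrence once more to $q_{2^r m+2}^{(r)}$... — here one must be slightly careful). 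Actually the clean statement, which I would verify by a short induction on the number of base-$2^r$ digits of $n$, is: $q_n^{(r)}=1$ iff $d_0 \in\{1,2\}$ and $d_i \in \{0,1\}$ for all $i \geq 1$. This is exactly what the automaton encodes: state $c_3$ (output $1$) is entered from $c_1$ on digit $1$ or $2$, stays on digits $0,1$, and on digits $\geq 2$ moves to the absorbing reject state $c_2$; starting state $c_1$ reads the most significant digit. One subtlety: the automaton in Figure \ref{qnr_auto} reads digits most-significant-first, so I would either argue directly with that automaton or carefully reconcile the direction with the least-significant-first recurrence; I expect \emph{this bookkeeping about digit order} to be the main place where care is needed, though it is not deep.

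Next, with the characterization in hand, I would count. The set $S_n = \{k : q_k^{(r)}=1,\ k \le \text{the } n\text{-th such value}\}$ — equivalently, I claim $u_n^{(r)}$ is obtained as follows. Numbers $k$ with $q_k^{(r)}=1$ are precisely numbers of the form $k = m + \varepsilon$ where $m$ is a sum of distinct powers of $2^r$ with the units power $2^0=1$ \emph{included} (so $d_0=1$) and $\varepsilon \in \{0,1\}$ toggles $d_0$ between $1$ and $2$. Wait — more simply: write $k = 2^r \cdot m' + d_0$ with $d_0\in\{1,2\}$ and $m'$ having all base-$2^r$ digits in $\{0,1\}$, i.e.\ $m'$ of the form $m_j^{(r)}$. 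Ordering these: for fixed $m' = m_j^{(r)}$ the two values $2^r m_j^{(r)}+1 < 2^r m_j^{(r)}+2$ are consecutive, and $2^r m_{j}^{(r)} + 2 < 2^r m_{j+1}^{(r)}+1$. So the $n$-th value ($n\ge 1$, zero-indexed with $u_0^{(r)}$ being the smallest) is: if $n = 2j$ then $2^r m_j^{(r)} + 1$, if $n = 2j+1$ then $2^r m_j^{(r)}+2$. By the recurrence (\ref{mnr_recur_eq}), $2^r m_j^{(r)}+1 = m_{2j+1}^{(r)} = m_n^{(r)}$ when we... hmm, $m_{2j+1}^{(r)} = 2^r m_j^{(r)} + 1$, and here $n=2j$, so this equals $m_{n+1}^{(r)}$, not $m_n^{(r)}$. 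Let me instead just assert the indexing works out to $u_n^{(r)} = m_n^{(r)}+1$: indeed $m_n^{(r)}+1$ for $n=2j$ gives $2^r m_j^{(r)} + 1$ and for $n = 2j+1$ gives $2^r m_j^{(r)} + 2$, which matches the enumeration above, and $m_0^{(r)}+1 = 1 = u_0^{(r)} = q_1$'s index. So the final step is just this matching of two enumerations, carried out via (\ref{mnr_recur_eq}) — a one-line induction or direct digit argument. The whole proof is short; the only genuine (minor) obstacle is getting the digit-order/off-by-one conventions consistent between the automaton, the recurrence, and the enumeration of $m_n^{(r)}$.
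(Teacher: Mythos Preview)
Your approach is correct and follows the same line as the paper: read off from the automaton (or the recurrence) that $q_n^{(r)}=1$ exactly when the base-$2^r$ digits of $n$ lie in $\{0,1\}$ except the last, which lies in $\{1,2\}$, and then match this set with $\{m_n^{(r)}+1\}$. The paper states the theorem as an immediate consequence of the digit characterization, with no further argument.

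Two small remarks. First, the automaton in Figure~\ref{qnr_auto} reads the input \emph{least}-significant digit first (this is forced by the recurrence $q_{2^r n+i}^{(r)}$, which peels off the low digit), so there is no direction mismatch to reconcile; your worry there is unfounded, and your stated characterization is the right one. Second, your enumeration via $n=2j,\,2j+1$ and the recurrence~(\ref{mnr_recur_eq}) works, but the paper's implicit step is slightly cleaner: since $2^r\ge 4$, adding $1$ to a number whose base-$2^r$ digits all lie in $\{0,1\}$ never produces a carry, so it changes only the last digit from $0$ or $1$ to $1$ or $2$. Thus $m\mapsto m+1$ is an order-preserving bijection from $\{m_n^{(r)}:n\in\mathbb N\}$ onto $\{k:q_k^{(r)}=1\}$, and $u_n^{(r)}=m_n^{(r)}+1$ follows at once.
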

\begin{cor} \label{unr_recur_eq} The sequence $(u_n^{(r)})_{n \in \mathbb{N}}$ satisfies the following recurrence relations:
$$u_0^{(r)} = 1, \qquad u_{2n}^{(r)} = u_n^{(r)} - 2^r + 1, \qquad u_{2n+1}^{(r)} = u_n^{(r)} - 2^r + 2.$$
\end{cor}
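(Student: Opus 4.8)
The plan is to obtain the recurrences for $(u_n^{(r)})_{n\in\mathbb{N}}$ directly from Theorem~\ref{unr_mnr_eq}, which identifies $u_n^{(r)}$ with $m_n^{(r)}+1$, together with the recurrence relations~(\ref{mnr_recur_eq}) for the sequence $(m_n^{(r)})_{n\in\mathbb{N}}$. This is the exact analogue of the derivation of Theorem~\ref{un_recur_eq} from Corollary~\ref{un_mn_eq} and~(\ref{mn_recur_eq}) in the special case $r=2$, so no new idea is needed.

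First I would record the base case: since $m_0^{(r)}=0$, Theorem~\ref{unr_mnr_eq} gives $u_0^{(r)}=m_0^{(r)}+1=1$. Next, substituting $m_n^{(r)}=u_n^{(r)}-1$ into $m_{2n}^{(r)}=2^rm_n^{(r)}$ and adding $1$ yields
\[
u_{2n}^{(r)}=m_{2n}^{(r)}+1=2^r m_n^{(r)}+1=2^r\bigl(u_n^{(r)}-1\bigr)+1=2^r u_n^{(r)}-2^r+1,
\]
and doing the same with $m_{2n+1}^{(r)}=2^rm_n^{(r)}+1$ gives
\[
u_{2n+1}^{(r)}=m_{2n+1}^{(r)}+1=2^r m_n^{(r)}+2=2^r\bigl(u_n^{(r)}-1\bigr)+2=2^r u_n^{(r)}-2^r+2.
\]
Note that the derivation naturally produces the factor $2^r$ multiplying $u_n^{(r)}$; specialising to $r=2$ this reads $u_{2n}=4u_n-3$ and $u_{2n+1}=4u_n-2$, in agreement with Theorem~\ref{un_recur_eq} (so the displayed formula in the corollary should carry that coefficient as well).

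Since the whole argument is essentially a one-line substitution, there is no real obstacle; the only points to watch are not dropping the coefficient $2^r$ and handling $n=0$ separately via the base case. As an immediate byproduct, the $2$-kernel subsequences $(u_{2n}^{(r)})_{n\in\mathbb{N}}$ and $(u_{2n+1}^{(r)})_{n\in\mathbb{N}}$ are expressed as $\mathbb{Z}$-linear combinations of $(u_n^{(r)})_{n\in\mathbb{N}}$ and the constant sequence $1$, so $(u_n^{(r)})_{n\in\mathbb{N}}$ is $2$-regular, exactly as in Corollary~\ref{un_2reg}.
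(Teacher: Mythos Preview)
Your derivation is exactly the one the paper intends: the corollary is placed immediately after Theorem~\ref{unr_mnr_eq} and the recurrences~(\ref{mnr_recur_eq}), and the substitution $m_n^{(r)}=u_n^{(r)}-1$ is the only step required. You are also right that the displayed formula in the corollary is missing the factor $2^r$ in front of $u_n^{(r)}$; your computation and the specialisation to $r=2$ (matching Theorem~\ref{un_recur_eq}) confirm that the intended relations are $u_{2n}^{(r)}=2^r u_n^{(r)}-2^r+1$ and $u_{2n+1}^{(r)}=2^r u_n^{(r)}-2^r+2$.
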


\begin{cor} The sequence $(u_n^{(r)})_{n \in \mathbb{N}}$ is $2$-regular.
\end{cor}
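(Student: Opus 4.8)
The plan is to verify the definition of $2$-regularity directly from the recurrence relations in Corollary \ref{unr_recur_eq}. Note first that the values $u_n^{(r)}$ grow without bound, so the $2$-kernel $\mathcal{K}_2((u_n^{(r)})_{n \in \mathbb{N}})$ is itself infinite; hence we cannot argue via automaticity and must instead exhibit a finite family of sequences whose $\mathbb{Z}$-span contains the whole $2$-kernel. The natural candidates are $(u_n^{(r)})_{n \in \mathbb{N}}$ itself together with the constant sequence $(1)_{n \in \mathbb{N}}$.

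The key step is the claim that the $\mathbb{Z}$-module $M$ generated by all subsequences $(u_{2^i n + j}^{(r)})_{n \in \mathbb{N}}$ with $i \geq 0$ and $0 \leq j < 2^i$ is already generated by $(u_n^{(r)})_{n \in \mathbb{N}}$ and $(1)_{n \in \mathbb{N}}$. I would prove this by induction on $i$. The case $i = 0$ is immediate. For the inductive step, write $j = 2j' + \varepsilon$ with $\varepsilon \in \{0, 1\}$ and $0 \leq j' < 2^{i-1}$, so that $2^i n + j = 2(2^{i-1} n + j') + \varepsilon$; applying the relations of Corollary \ref{unr_recur_eq} gives
$$u_{2^i n + j}^{(r)} = u_{2^{i-1} n + j'}^{(r)} - 2^r + 1 + \varepsilon$$
for every $n \in \mathbb{N}$. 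By the inductive hypothesis the sequence $(u_{2^{i-1} n + j'}^{(r)})_{n \in \mathbb{N}}$ is a $\mathbb{Z}$-linear combination of $(u_n^{(r)})_{n \in \mathbb{N}}$ and $(1)_{n \in \mathbb{N}}$, and adding the constant sequence $(-2^r + 1 + \varepsilon)(1)_{n \in \mathbb{N}}$ keeps the result in that span. Hence $M$ is contained in, and therefore equal to, the span of $(u_n^{(r)})_{n \in \mathbb{N}}$ and $(1)_{n \in \mathbb{N}}$.

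Consequently every element of $\mathcal{K}_2((u_n^{(r)})_{n \in \mathbb{N}})$ is of the form $a_1 (u_n^{(r)})_{n \in \mathbb{N}} + a_2 (1)_{n \in \mathbb{N}}$ with $a_1, a_2 \in \mathbb{Z}$, which is precisely the definition of $2$-regularity given in the introduction, with generating sequences $(u_n^{(r)})_{n \in \mathbb{N}}$ and $(1)_{n \in \mathbb{N}}$. This also mirrors the argument behind Corollary \ref{un_2reg} in the case $r = 2$. I expect no genuine obstacle here; the only point that requires a little care is remembering to include the constant sequence among the generators, since the $2$-kernel of $(u_n^{(r)})_{n \in \mathbb{N}}$ alone is not finitely generated.
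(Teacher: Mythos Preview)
Your proof is correct and follows exactly the route the paper intends: the corollary is stated without proof because the recurrence relations of Corollary~\ref{unr_recur_eq} immediately show that every element of the $2$-kernel is a $\mathbb{Z}$-linear combination of $(u_n^{(r)})_{n\in\mathbb{N}}$ and the constant sequence $(1)_{n\in\mathbb{N}}$, and you have simply written out this verification explicitly. One cosmetic remark: the recurrences in Corollary~\ref{unr_recur_eq} as printed omit the factor $2^r$ in front of $u_n^{(r)}$ (compare with Theorem~\ref{un_recur_eq} for $r=2$), and your displayed formula inherits this typo; however, your inductive argument is unaffected, since a scalar multiple of a sequence in the span of $(u_n^{(r)})$ and $(1)$ remains in that span.
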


\noindent In the following part, we will focus on the properties of the sequence $(u_n^{(r)})_{n \in \mathbb{N}}$. As before, we start with the following theorem, which characterizes the values of the sequence $(u_{n+1}^{(r)} - u_n^{(r)})_{n \in \mathbb{N}}$. This result can be proved using the same method as in the proof of Theorem \ref{un_differ_eq}.

\begin{thm} \label{unr_differ_eq} We have the equality
$$\left\{u_{n+1}^{(r)} - u_n^{(r)} : n \in \mathbb{N}\right\} = \left\{\frac{1}{2^r-1}\left(1 + (2^r-2)\cdot 2^{rk}\right) : k \in \mathbb{N}\right\}.$$

\noindent Moreover, for each $k \in \mathbb{N}$ we have $u_{n+1}^{(r)} - u_n^{(r)} = \frac{1}{2^r-1}(1 + (2^r-2)\cdot2^{rk})$ if and only if $n$ has the form $n = (2m+1)2^k - 1$, $m \in \mathbb{N}$.
\end{thm}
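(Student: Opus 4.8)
The plan is to imitate the proof of Theorem~\ref{un_differ_eq} line by line, with the constant $4$ replaced by $2^r$. The starting observation is that every $n \in \mathbb{N}$ has a unique representation
$$n = (2m+1)2^k - 1, \qquad m, k \in \mathbb{N},$$
obtained by writing $n+1$ uniquely as an odd number times a power of $2$. Since for each fixed $k$ the choice $m = 0$ already realizes such an $n$ (namely $n = 2^k-1$), it suffices to prove that $n = (2m+1)2^k - 1$ forces $u_{n+1}^{(r)} - u_n^{(r)} = \frac{1}{2^r-1}\bigl(1 + (2^r-2)2^{rk}\bigr)$; this one statement yields both the displayed set equality and the ``moreover'' clause simultaneously.

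Set $D_n := u_{n+1}^{(r)} - u_n^{(r)}$. I would argue by induction on $k$, using the recurrences $u_{2n}^{(r)} = 2^r u_n^{(r)} - 2^r + 1$ and $u_{2n+1}^{(r)} = 2^r u_n^{(r)} - 2^r + 2$ (which follow from Corollary~\ref{unr_recur_eq}, equivalently from $u_n^{(r)} = m_n^{(r)} + 1$ together with (\ref{mnr_recur_eq})). For $k = 0$ one has $n = 2m$, so $D_{2m} = u_{2m+1}^{(r)} - u_{2m}^{(r)} = 1$, matching the claimed value $\frac{1}{2^r-1}(1 + (2^r-2))$. For $k \geq 1$, the number $n = (2m+1)2^k - 1$ is odd, so write $n = 2n'+1$ with $n' = (2m+1)2^{k-1} - 1$; then $n+1 = 2(n'+1)$ and
$$D_n = u_{2(n'+1)}^{(r)} - u_{2n'+1}^{(r)} = \bigl(2^r u_{n'+1}^{(r)} - 2^r + 1\bigr) - \bigl(2^r u_{n'}^{(r)} - 2^r + 2\bigr) = 2^r D_{n'} - 1 .$$
Since $n'$ again has the form $(2m+1)2^{k-1} - 1$, the induction hypothesis gives $D_{n'} = \frac{1}{2^r-1}\bigl(1 + (2^r-2)2^{r(k-1)}\bigr)$, and substituting and simplifying (using $2^r\cdot 2^{r(k-1)} = 2^{rk}$, and noting that the constant terms collapse because $2^r - (2^r-1) = 1$) yields precisely $D_n = \frac{1}{2^r-1}\bigl(1 + (2^r-2)2^{rk}\bigr)$.

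I do not expect a genuine obstacle here: the whole argument reduces to the clean one-step recursion $D_n = 2^r D_{n'} - 1$, which is exactly what forces the stated closed form, so the proof is mechanical. The only points demanding a little care are getting the correct $2^r$-coefficient in the $u^{(r)}$-recurrences (the $r=2$ specialization must reproduce Theorem~\ref{un_recur_eq}) and the short algebraic simplification at the end; as a sanity check one may verify $D_0 = 1$ and $D_1 = \frac{1}{2^r-1}\bigl(1+(2^r-2)2^r\bigr)$ directly from Corollary~\ref{unr_recur_eq} and compare with the $r=2$ values in Theorem~\ref{un_differ_eq}.
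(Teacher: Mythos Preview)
Your proposal is correct and follows exactly the approach indicated in the paper, which merely states that the result ``can be proved using the same method as in the proof of Theorem~\ref{un_differ_eq}'' --- namely the unique factorization $n = (2m+1)2^k - 1$ together with induction on $k$ via the recurrences for $(u_n^{(r)})$. Your write-up actually supplies the details the paper omits, including the key one-step relation $D_n = 2^r D_{n'} - 1$, and your version of the recurrences $u_{2n}^{(r)} = 2^r u_n^{(r)} - 2^r + 1$ (with the $2^r$ factor present) is the correct one derived from Theorem~\ref{unr_mnr_eq} and~(\ref{mnr_recur_eq}).
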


\noindent In section \ref{sec_inv2}, we proved the equalities
$$\liminf_{n\to\infty}\frac{u_n}{n^2} = \frac{1}{3}, \qquad \limsup_{n\to\infty}\frac{u_n}{n^2} = 1.$$

\noindent The proof relies on the connection between the sequence $(u_n)_{n \in \mathbb{N}}$ and the characteristic sequence of $1$'s in the formal inverse of the Thue--Morse sequence. In the general case, we do not have such a connection. However, it is still possible to find the analogous equalities for the sequence $(u_n^{(r)})_{n \in \mathbb{N}}$.

\vspace{0.3 cm}
\noindent We start with the following lemma. It can be proved using induction and equalities in Corollary \ref{unr_recur_eq}.

\begin{lem} For all $n \in \mathbb{N}$ we have that
$$\frac{(n+1)^r + 2^r - 2}{2^r - 1} \leq u_n^{(r)} \leq n^r + 1.$$

\noindent Moreover, for all $m \in \mathbb{N}$ we have
$$u_{2^m-1}^{(r)} = \frac{2^{mr} + 2^r - 2}{2^r - 1}, \qquad u_{2^m}^{(r)} = 2^{mr} + 1.$$
\end{lem}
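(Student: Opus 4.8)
The statement consists of two parts: the two-sided bound
$$\frac{(n+1)^r + 2^r - 2}{2^r-1} \leq u_n^{(r)} \leq n^r + 1$$
valid for all $n$, and the exact evaluation of $u_n^{(r)}$ at the special indices $n = 2^m - 1$ and $n = 2^m$. The plan is to prove the exact evaluations first by a direct induction on $m$, and then to prove the general inequality by strong induction on $n$, using the recurrences from Corollary \ref{unr_recur_eq}, namely $u_{2n}^{(r)} = u_n^{(r)} - 2^r + 1$ and $u_{2n+1}^{(r)} = u_n^{(r)} - 2^r + 2$, with base case $u_0^{(r)} = 1$. Throughout, it is convenient to keep Theorem \ref{unr_mnr_eq} in mind, $u_n^{(r)} = m_n^{(r)} + 1$, since $m_n^{(r)}$ is simply "read $n$ in binary, reinterpret in base $2^r$"; many of the estimates are cleanest when phrased for $m_n^{(r)}$.

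For the exact values: write $n = 2^m - 1$, whose binary expansion is $m$ consecutive $1$'s, so $m_{2^m-1}^{(r)} = \sum_{j=0}^{m-1} 2^{rj} = \frac{2^{rm}-1}{2^r-1}$, hence $u_{2^m-1}^{(r)} = \frac{2^{rm}-1}{2^r-1} + 1 = \frac{2^{rm} + 2^r - 2}{2^r-1}$, as claimed. Similarly $n = 2^m$ has binary expansion $1\underbrace{0\cdots0}_{m}$, so $m_{2^m}^{(r)} = 2^{rm}$ and $u_{2^m}^{(r)} = 2^{rm}+1$. If one prefers to stay purely inductive: from $u_{2n+1}^{(r)} = u_n^{(r)} - 2^r + 2$ applied with $n = 2^{m-1}-1$ one gets $u_{2^m-1}^{(r)} = u_{2^{m-1}-1}^{(r)} - 2^r + 2$, and the closed form follows by solving this linear recurrence with $u_0^{(r)} = 1$; likewise $u_{2^m}^{(r)} = u_{2^{m-1}}^{(r)} - 2^r + 1$ is solved with $u_1^{(r)} = u_0^{(r)} - 2^r + 2 = 4 - 2^r$... — wait, one must be careful here: actually $u_1^{(r)} = 2$ since $q_1^{(r)} = 1$ and $1$ is the first index, so the recurrence must be checked against $u_0^{(r)} = 1, u_1^{(r)} = 2$. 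This bookkeeping (getting the base cases of the two subsequences right) is the one place small errors creep in, so I would double-check it via the $m_n^{(r)}$ description.

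For the inequality, I proceed by strong induction on $n$, splitting into the cases $n = 2k$ and $n = 2k+1$. In the even case, $u_{2k}^{(r)} = u_k^{(r)} - 2^r + 1$; applying the inductive upper bound $u_k^{(r)} \leq k^r + 1$ gives $u_{2k}^{(r)} \leq k^r + 2 - 2^r$, and one must check $k^r + 2 - 2^r \leq (2k)^r + 1 = 2^r k^r + 1$, i.e. $1 \leq (2^r-1)k^r + 2^r - 1$, which is clear. For the lower bound in the even case, apply $u_k^{(r)} \geq \frac{(k+1)^r + 2^r-2}{2^r-1}$ to get $u_{2k}^{(r)} \geq \frac{(k+1)^r + 2^r - 2}{2^r-1} - 2^r + 1 = \frac{(k+1)^r + 2^r - 2 - (2^r-1)^2}{2^r-1}$, and this must be shown to be $\geq \frac{(2k+1)^r + 2^r - 2}{2^r-1}$, i.e. $(k+1)^r - (2^r-1)^2 \geq (2k+1)^r$ — which is false as stated, so the lower-bound argument needs the sharper input. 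The correct move is to not lose information: use $u_k^{(r)} = m_k^{(r)} + 1$ and the exact relation $m_{2k}^{(r)} = 2^r m_k^{(r)}$, so $u_{2k}^{(r)} = 2^r(u_k^{(r)} - 1) + 1$, and then feed in $u_k^{(r)} - 1 = m_k^{(r)} \geq \frac{(k+1)^r - 1}{2^r - 1}$ (the lower bound rewritten), giving $u_{2k}^{(r)} \geq \frac{2^r(k+1)^r - 2^r}{2^r-1} + 1 = \frac{2^r(k+1)^r + 2^r - 2 - 2^r \cdot 2^r + 2^r}{2^r-1}$... I would instead just verify $2^r(k+1)^r \geq (2k+1)^r + 2^r - 1$, which holds since $(2(k+1))^r \geq (2k+1)^r + (2^r-1)$ for $r \geq 1$ by convexity/binomial expansion. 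The odd case $n = 2k+1$ is handled identically using $m_{2k+1}^{(r)} = 2^r m_k^{(r)} + 1$, so $u_{2k+1}^{(r)} = 2^r(u_k^{(r)}-1) + 2$.

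**The main obstacle.** The genuine difficulty is not the induction mechanics but choosing the right induction hypothesis so that it is self-propagating: the naive two-sided bound does not directly reproduce itself under the halving step (as the failed attempt above shows), and the clean fix is to carry the reformulation in terms of $m_n^{(r)} = u_n^{(r)} - 1$, where $m_{2k}^{(r)} = 2^r m_k^{(r)}$ and $m_{2k+1}^{(r)} = 2^r m_k^{(r)} + 1$ make the recursion homogeneous and the required polynomial inequalities reduce to elementary binomial estimates of the form $(2t)^r \geq (2t \pm 1)^r + (2^r - 1)$. Equivalently — and perhaps most transparently — one can bypass induction on the recurrence entirely: from $m_n^{(r)} = \sum_{j} \varepsilon_j 2^{rj}$ where $n = \sum_j \varepsilon_j 2^j$ has $s$ binary digits (so $2^{s-1} \leq n \leq 2^s - 1$), sandwich $m_n^{(r)}$ between its values at $n = 2^{s-1}$ and $n = 2^s - 1$, which are exactly the special values just computed, and then translate the resulting bounds on $s$ back into bounds in terms of $n$ using $2^{s-1} \leq n+1$ on one side and $n \leq 2^s - 1$ on the other; this route makes the appearance of $(n+1)^r$ on the left and $n^r$ on the right completely natural and keeps all estimates monotone.
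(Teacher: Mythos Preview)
Your core approach---strong induction via the recurrence $u_{2k}^{(r)}=2^r(u_k^{(r)}-1)+1$, $u_{2k+1}^{(r)}=2^r(u_k^{(r)}-1)+2$---is exactly what the paper has in mind (it says only ``induction and equalities in Corollary~\ref{unr_recur_eq}''), and your verification of the key inequality $(2k+2)^r\geq(2k+1)^r+2^r-1$ by convexity is the right step. The direct computation of the special values $u_{2^m-1}^{(r)}$ and $u_{2^m}^{(r)}$ via the base-$2^r$ interpretation of $m_n^{(r)}$ is also fine and indeed cleaner than chasing the recurrence.

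Two points to fix, however. First, the recurrence as printed in Corollary~\ref{unr_recur_eq} is missing the factor $2^r$ in front of $u_n^{(r)}$ (compare Theorem~\ref{un_recur_eq} for $r=2$); your initial failed attempt stems from taking that typo at face value, and you should simply work from $m_{2n}^{(r)}=2^r m_n^{(r)}$ throughout. Your summary inequality ``$(2t)^r\geq(2t\pm1)^r+(2^r-1)$'' is also wrong with the plus sign; the four cases actually reduce to $(2k+2)^r-(2k+1)^r\geq 2^r-1$ (even lower bound), an exact identity (odd lower bound), a trivial bound (even upper bound), and $(2k+1)^r-(2k)^r\geq 1$ (odd upper bound).

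Second, the alternative ``sandwich'' route you sketch at the end does \emph{not} work as stated. If $2^{s-1}\leq n\leq 2^s-1$, monotonicity gives only $2^{r(s-1)}\leq m_n^{(r)}\leq\frac{2^{rs}-1}{2^r-1}$, and neither endpoint can be converted into the claimed bound in terms of $n$: for instance at $n=2^{s-1}$ the sandwich upper bound $\frac{2^{rs}-1}{2^r-1}$ exceeds $n^r=2^{r(s-1)}$, and at $n=2^s-1$ the sandwich lower bound $2^{r(s-1)}$ is strictly smaller than $\frac{(n+1)^r-1}{2^r-1}=\frac{2^{rs}-1}{2^r-1}$. So that paragraph should be dropped; the induction is the proof.
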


\vspace{0.1 cm}
\noindent As a consequence, we get the following result.
\begin{thm} We have the equalities
$$\liminf_{n\to\infty}\frac{u_n^{(r)}}{n^r} = \frac{1}{2^r-1}, \qquad \limsup_{n\to\infty}\frac{u_n^{(r)}}{n^r} = 1.$$
\end{thm}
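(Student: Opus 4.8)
The plan is to derive both limits directly from the preceding lemma. The lemma provides the two-sided bound $\frac{(n+1)^r + 2^r - 2}{2^r-1} \leq u_n^{(r)} \leq n^r + 1$ for all $n \in \mathbb{N}$, together with the exact values $u_{2^m-1}^{(r)} = \frac{2^{mr}+2^r-2}{2^r-1}$ and $u_{2^m}^{(r)} = 2^{mr}+1$. Dividing the general bound by $n^r$ gives
$$\frac{1}{2^r-1}\cdot\frac{(n+1)^r + 2^r-2}{n^r} \leq \frac{u_n^{(r)}}{n^r} \leq 1 + \frac{1}{n^r},$$
so that for every $n$ the quotient $u_n^{(r)}/n^r$ lies between a quantity tending to $\frac{1}{2^r-1}$ and a quantity tending to $1$. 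This immediately yields $\liminf_{n\to\infty} u_n^{(r)}/n^r \geq \frac{1}{2^r-1}$ and $\limsup_{n\to\infty} u_n^{(r)}/n^r \leq 1$.

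For the matching bounds I would use the two explicit subsequences. Evaluating along $n = 2^m - 1$, one gets
$$\frac{u_{2^m-1}^{(r)}}{(2^m-1)^r} = \frac{1}{2^r-1}\cdot\frac{2^{mr}+2^r-2}{(2^m-1)^r},$$
and since $(2^m-1)^r = 2^{mr}(1 - 2^{-m})^r$, the ratio $\frac{2^{mr}+2^r-2}{(2^m-1)^r} \to 1$ as $m \to \infty$; hence this subsequence converges to $\frac{1}{2^r-1}$, giving $\liminf_{n\to\infty} u_n^{(r)}/n^r \leq \frac{1}{2^r-1}$. Similarly, along $n = 2^m$ we have
$$\frac{u_{2^m}^{(r)}}{(2^m)^r} = \frac{2^{mr}+1}{2^{mr}} = 1 + 2^{-mr} \to 1,$$
so $\limsup_{n\to\infty} u_n^{(r)}/n^r \geq 1$. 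Combining with the previous paragraph settles both equalities.

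There is really no serious obstacle here: the whole argument is a squeeze using the lemma, and the only mild care needed is checking that the explicit subsequences $n = 2^m-1$ and $n = 2^m$ actually attain (asymptotically) the extreme values of the two-sided bound — which they do, since the lemma records equality for exactly those indices. One could also note, as a remark, that combined with Theorem \ref{unr_differ_eq} and an intermediate-value style argument (as in the $r=2$ case treated earlier), the set $\{u_n^{(r)}/n^r : n \in \mathbb{N}\}$ is in fact dense in $\left[\frac{1}{2^r-1}, 1\right]$, but for the stated theorem only the $\liminf$ and $\limsup$ are required, so I would keep the proof to the short squeeze argument above.
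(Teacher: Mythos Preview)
Your proposal is correct and matches the paper's own approach: the paper states the theorem simply as a consequence of the preceding lemma, and your squeeze argument using the general two-sided bound together with the explicit subsequences $n=2^m-1$ and $n=2^m$ is precisely the intended derivation.
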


\noindent Let us recall that in the case $r = 2$ we have that the set $\{u_n/n^2 : n \in \mathbb{N}\}$ is dense in the interval $\left[\frac{1}{3}, 1\right]$. In the general case, the analogous statement is also true.

\begin{thm} The set $\{u_n^{(r)}/n^r : n \in \mathbb{N}\}$ is dense in the interval $\left[\frac{1}{2^r-1}, 1\right]$.
\end{thm}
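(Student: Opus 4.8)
The plan is to reduce the claim to a surjectivity statement for an explicit (discontinuous) function and then to use that all of its discontinuities point in the same direction. First, by Theorem~\ref{unr_mnr_eq} one has $u_n^{(r)}/n^r = m_n^{(r)}/n^r + n^{-r}$, and the bounds established just above give $\tfrac{1}{2^r-1} < u_n^{(r)}/n^r \le 1 + n^{-r}$, so all accumulation points of $\{u_n^{(r)}/n^r\}$ already lie in $[\tfrac{1}{2^r-1},1]$; hence it suffices to prove that every $x\in[\tfrac{1}{2^r-1},1]$ is an accumulation point of $\{m_n^{(r)}/n^r : n\ge 1\}$. To this end I would take, for a real $\gamma\in[0,1]$ with binary expansion $\gamma=\sum_{t\ge1}d_t 2^{-t}$ (the terminating one when $\gamma$ is dyadic), the numbers $n_k=2^k+\sum_{t=1}^{k}d_t 2^{k-t}$, i.e.\ those whose binary word is $1$ followed by the first $k$ digits of $\gamma$. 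Iterating the recurrences (\ref{mnr_recur_eq}) gives $m_{n_k}^{(r)}=2^{rk}+\sum_{t=1}^{k}d_t 2^{r(k-t)}$, so
\[
\frac{m_{n_k}^{(r)}}{n_k^{\,r}}=\frac{1+\sum_{t=1}^{k}d_t 2^{-rt}}{\bigl(1+\sum_{t=1}^{k}d_t 2^{-t}\bigr)^{r}}\ \xrightarrow[k\to\infty]{}\ K(\gamma):=\frac{1+\gamma^{[r]}}{(1+\gamma)^{r}},\qquad \gamma^{[r]}:=\sum_{t\ge1}d_t 2^{-rt}\in\Bigl[0,\tfrac{1}{2^r-1}\Bigr].
\]
Thus every value of $K$ on $[0,1]$ is an accumulation point of $\{m_n^{(r)}/n^r\}$; since $K(0)=1$ and $K(1)=\tfrac{1}{2^r-1}$ (with $1=\sum_{t\ge1}2^{-t}$), the theorem reduces to showing that $K$ attains every value in $[\tfrac{1}{2^r-1},1]$.

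The hard part will be that $K$ is genuinely discontinuous, so one cannot simply invoke the intermediate value theorem: the map $\gamma\mapsto\gamma^{[r]}$ merely rewrites base-$2$ digits as base-$2^r$ digits, its image is a Cantor set, and it jumps at every dyadic rational. What saves the argument is that $\gamma\mapsto\gamma^{[r]}$ is strictly increasing and right-continuous (a short computation with binary and base-$2^r$ digits), whence $K$ is a ratio of an increasing, right-continuous function and a continuous positive one, so $K$ is right-continuous, has left limits everywhere, and all of its jumps are upward.

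Given these properties I would finish as follows: for $x\in(\tfrac{1}{2^r-1},1)$ put $\gamma^{*}=\inf\{\gamma\in[0,1]:K(\gamma)<x\}$. Right-continuity of $K$ at $0$ together with $K(0)=1>x$ forces $\gamma^{*}>0$. For every $\gamma<\gamma^{*}$ we have $K(\gamma)\ge x$, so the left limit of $K$ at $\gamma^{*}$ is $\ge x$; choosing $\gamma_n\downarrow\gamma^{*}$ with $K(\gamma_n)<x$ and using right-continuity at $\gamma^{*}$ gives $K(\gamma^{*})\le x$. Since $K$ has no downward jump, these inequalities force $K(\gamma^{*})=x$, and feeding $\gamma=\gamma^{*}$ into the construction above yields $n_k\to\infty$ with $u_{n_k}^{(r)}/n_k^{\,r}\to x$; the endpoints $x=1$ and $x=\tfrac{1}{2^r-1}$ come from $\gamma=0$ and $\gamma=1$. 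The only points needing care in a full write-up are the right-continuity and upward-jump property of $\gamma\mapsto\gamma^{[r]}$ and the (immediate) passage to the limit in the displayed identity.
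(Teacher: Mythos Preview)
Your argument is correct. The paper itself does not give a detailed proof of this theorem; it merely states that one can use the same method as in \cite[Theorem~3.8]{PTM}. Your construction---taking $n_k$ whose binary word is $1$ followed by the first $k$ binary digits of a parameter $\gamma\in[0,1]$ and computing the limit $K(\gamma)=(1+\gamma^{[r]})/(1+\gamma)^r$---is precisely the natural generalization of that method to arbitrary $r$, so in spirit your approach and the paper's intended approach coincide.

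Where your write-up adds something is in the treatment of surjectivity of $K$. Rather than exhibiting, for each target value $x$, an explicit $\gamma$ (or an explicit sequence of $n$'s) as is done in \cite{PTM}, you observe once and for all that $\gamma\mapsto\gamma^{[r]}$ is increasing and right-continuous, hence $K$ is right-continuous with left limits and only upward jumps, and then run a clean infimum argument. This is a tidy abstraction: it avoids case analysis on the binary expansion of $x$ and makes transparent why the discontinuities of $K$ cannot cause any value in $[\tfrac{1}{2^r-1},1]$ to be missed. The two verifications you flag as needing care (right-continuity and the sign of the jumps of $\gamma\mapsto\gamma^{[r]}$, and the passage to the limit in the displayed identity) are indeed routine, so the proposal is complete as it stands.
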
 

\noindent To prove this statement, we can use the same method as in the proof of the analogous result for the formal inverse of the Thue--Morse sequence (see \hspace{-0.2pt}{\cite[Theorem 3.8]{PTM}} for details).

\vspace{0.2 cm}
\noindent Denote by $(v_n^{(r)})_{n \in \mathbb{N}}$ the characteristic sequence of $0$'s in the sequence $(q_n^{(r)})_{n \in \mathbb{N}}$. In the following, we are going to prove that the sequence $(v_n^{(r)})_{n \in \mathbb{N}}$ is not $k$-regular for any $k \geq 2$. 

\vspace{0.3 cm}
\noindent First, we consider the sequence $(w_n^{(r)})_{n \in \mathbb{N}}$ where $w_n^{(r)} = v_{n+1}^{(r)} - 1$. Note that by Theorem \ref{unr_mnr_eq} the sequence $(w_n^{(r)})_{n \in \mathbb{N}}$ consists of those natural numbers which do not appear in the sequence $(m_n^{(r)})_{n \in \mathbb{N}}$. Furthermore, we consider a sequence $(s_n^{(r)})_{n \in \mathbb{N}}$ given by
$$s_n^{(r)} = \frac{w_n^{(r)} - n}{2} \!\!\! \mod 2.$$

\noindent We have the following result.

\begin{lem} \label{lemma_sn_1}
We have $s_n^{(r)} = 1$ if and only if $n$ can be written in the form $n = \sum_{i=0}^s(2^{rn_i} - 2^{n_i}) + j$, where $1 < n_0 < \hdots < n_s$ and $j \in \{0, 1, \dots, 2^r - 3\}.$
\end{lem}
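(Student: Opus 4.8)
The plan is to recast the statement in terms of the set $M = \{m_n^{(r)} : n \in \mathbb{N}\}$ of natural numbers whose base-$2^r$ expansion uses only the digits $0$ and $1$, together with its complement $W = \mathbb{N}\setminus M$. By the remark preceding the lemma, $(w_n^{(r)})_{n\in\mathbb{N}}$ is exactly the increasing enumeration of $W$, so $w_n^{(r)}$ is the $n$-th smallest element of $W$. Partitioning the integers $0,1,\dots,w_n^{(r)}-1$ according to whether they lie in $M$ or in $W$, and using that precisely $n$ of them lie in $W$, gives the identity $w_n^{(r)}-n = \#\{m\in M : m < w_n^{(r)}\}$. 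Hence, if $k$ denotes the unique index with $m_k^{(r)} < w_n^{(r)} < m_{k+1}^{(r)}$, then $w_n^{(r)}-n = k+1$.

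Next I would record two elementary facts about $M$, both immediate from the recurrences (\ref{mnr_recur_eq}), i.e.\ from the fact that $m_n^{(r)}$ is obtained by reading the binary expansion of $n$ in base $2^r$. First, since $m_{2l+1}^{(r)} = m_{2l}^{(r)}+1$, no element of $W$ can lie strictly between $m_k^{(r)}$ and $m_{k+1}^{(r)}$ when $k$ is even; therefore the index $k$ above is always odd, $w_n^{(r)}-n = k+1$ is even (so $s_n^{(r)}$ is well defined), and
\[
s_n^{(r)} = \frac{k+1}{2}\bmod 2 = \begin{cases} 1 & \text{if } k\equiv 1\pmod 4,\\[2pt] 0 & \text{if } k\equiv 3 \pmod 4.\end{cases}
\]
Second, when $k = 4h+1$ the binary expansions of $4h+1$ and $4h+2$ differ only in their two lowest bits, so $m_{4h+2}^{(r)}-m_{4h+1}^{(r)} = 2^r-1$, and the integers of $W$ lying strictly between $m_{4h+1}^{(r)}$ and $m_{4h+2}^{(r)}$ are exactly $m_{4h+1}^{(r)}+1,\dots,m_{4h+1}^{(r)}+2^r-2$.

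Combining these observations, $s_n^{(r)}=1$ if and only if $w_n^{(r)}$ lies in one of the ``blocks'' $(m_{4h+1}^{(r)},m_{4h+2}^{(r)})$ with $h\in\mathbb{N}$, in which case $n = w_n^{(r)}-(4h+2)$ ranges exactly over $\{\,m_{4h+1}^{(r)}-4h-1+j : 0\le j\le 2^r-3\,\}$. It then remains to evaluate the base value $m_{4h+1}^{(r)}-4h-1$. Writing $h = \sum_{i\ge 0}b_i 2^i$ in binary, the binary expansion of $4h+1$ has bit $0$ equal to $1$, bit $1$ equal to $0$, and bit $i+2$ equal to $b_i$; reading this in base $2^r$ yields $m_{4h+1}^{(r)} = 1+\sum_i b_i 2^{r(i+2)}$, and therefore
\[
m_{4h+1}^{(r)}-4h-1 = \sum_{i:\,b_i=1}\bigl(2^{r(i+2)}-2^{i+2}\bigr) = \sum_{\ell=0}^{s}\bigl(2^{rn_\ell}-2^{n_\ell}\bigr),
\]
where $\{n_0<\dots<n_s\} = \{i+2 : b_i=1\}$ is an arbitrary finite set of integers greater than $1$ (the empty set, i.e.\ $h=0$, producing the small values $n=0,1,\dots,2^r-3$). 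Conversely every admissible tuple $1<n_0<\dots<n_s$ arises from $h=\sum_\ell 2^{n_\ell-2}$, so $h\leftrightarrow(n_0,\dots,n_s)$ is a bijection and the equivalence claimed in the lemma follows.

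I expect the only delicate point to be the bookkeeping in the last step: one must keep the binary expansion of the block index $h$, the base-$2^r$ expansion of $m_{4h+1}^{(r)}$, and the resulting tuple $(n_0,\dots,n_s)$ aligned, and verify that the constraint ``$n_\ell>1$'' is precisely what the two lowest bits of $4h+1$ force. Everything else reduces to the counting identity of the first paragraph together with the recurrences (\ref{mnr_recur_eq}) and Theorem \ref{unr_mnr_eq}.
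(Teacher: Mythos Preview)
Your proposal is correct and follows essentially the same route as the paper: both establish the counting identity $w_n^{(r)}-n=\#\{k:m_k^{(r)}<w_n^{(r)}\}$, deduce that $s_n^{(r)}=1$ precisely when $w_n^{(r)}$ lies in an interval $(m_{4h+1}^{(r)},m_{4h+2}^{(r)})$ of length $2^r-1$, and then convert the base value $m_{4h+1}^{(r)}-4h-1$ into the claimed sum via the binary-to-base-$2^r$ description of $m_k^{(r)}$. The only cosmetic differences are that you verify the gap length $2^r-1$ directly from the bit pattern of $4h+1$ and $4h+2$ (the paper instead invokes Theorem~\ref{unr_differ_eq}), and you make explicit why the index $k$ is always odd, which the paper leaves implicit.
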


\begin{proof} Let us note that for all $n \in \mathbb{N}$ we have
\begin{equation} \label{wn_n_eq}
w_n^{(r)} - n = \left|\left\{k \in \mathbb{N} : m_k^{(r)} < w_n^{(r)}\right\}\right|.
\end{equation}

\noindent Note that we have $s_n^{(r)} = 1$ if and only if $w_n^{(r)} - n = 4k + 2$ for some $k \in \mathbb{N}$. Hence, from equation (\ref{wn_n_eq}), we get the formula
$$s_n^{(r)} = \left\{\begin{array}{ll}
1 & \text{ if } m_{4k+1}^{(r)} < w_n^{(r)} < m_{4k+2}^{(r)} \text{ for some } k \in \mathbb{N},\\
0 & \text{ otherwise}.
\end{array} \right.$$

\noindent Moreover, from Theorem \ref{unr_mnr_eq} and Theorem \ref{unr_differ_eq} we have that
$$m_{4k+2}^{(r)} - m_{4k+1}^{(r)} = 2^r - 1,$$

\noindent hence the sequence $(s_n^{(r)})_{n \in \mathbb{N}}$ consists of groups of consecutive $1$'s of the exact length $2^r - 2$. Moreover, if $n$ is a number such that $s_n^{(r)} = 1$ is the first one in the group of consecutive $1$'s, then $w_n^{(r)} = m_{4k+1}^{(r)} + 1$ for some $k \in \mathbb{N}$. From equation (\ref{wn_n_eq}), the last statement is equivalent to the fact that $n = m_{4k+1}^{(r)} - 4k - 1 = m_{4k}^{(r)} - 4k$. From equations (\ref{mnr_recur_eq}), it is clear that if $k = \sum_{i=0}^s2^{k_i}$ for some $0 \leq k_0 < \hdots < k_s$, then
$$m_k^{(r)} = \sum_{i=0}^s2^{rk_i}$$

\noindent and conversely. Therefore we have
$$n = m_{4k}^{(r)} - 4k = 2^{2r}\sum_{i=0}^s2^{rk_i} - 4\sum_{i=0}^s2^{k_i} = \sum_{i=0}^s(2^{r(k_i + 2)} - 2^{k_i + 2}),$$

\noindent and that completes the proof.
\end{proof}

\noindent We define a new sequence $(\tilde{s}_n^{(r)})_{n \in \mathbb{N}}$ as a sequence satisfying
$$\tilde{s}_n^{(r)} = \left\{\begin{array}{ll}
1 & \text{ if } n = \sum_{i=0}^s(2^{rn_i} - 2^{n_i}) \text{ for some } 1 < n_0 < \hdots < n_s,\\
0 & \text{ otherwise}.
\end{array} \right.$$

\noindent From Lemma \ref{lemma_sn_1}, we easily obtain the following result.
\begin{lem} \label{lemma_tildes}
We have the equality
$$\tilde{s}_n^{(r)} = \left\{\begin{array}{ll}
s_n^{(r)} & \text{ if } 2^r - 2 {\mid} n,\\
0 & \text{ otherwise}.
\end{array} \right.$$
\end{lem}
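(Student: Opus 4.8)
The plan is to derive both halves of the claim from Lemma~\ref{lemma_sn_1} together with one elementary fact: the congruence
$$2^{rk} \equiv 2^k \pmod{2^r - 2} \qquad \text{for all } k \in \mathbb{N}.$$
This is proved by induction on $k$: the case $k = 1$ holds because $2^r = (2^r - 2) + 2$, and the inductive step follows by multiplying both sides by $2^r \equiv 2 \pmod{2^r - 2}$. Consequently every summand $2^{rn_i} - 2^{n_i}$ occurring in the representations of Lemma~\ref{lemma_sn_1} satisfies $2^{rn_i} - 2^{n_i} \equiv 0 \pmod{2^r - 2}$, and hence so does any finite sum $\sum_{i=0}^{s}\bigl(2^{rn_i} - 2^{n_i}\bigr)$.

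First I would treat the easy implications. If $\tilde{s}_n^{(r)} = 1$, then $n = \sum_{i=0}^{s}(2^{rn_i} - 2^{n_i})$ for suitable $1 < n_0 < \cdots < n_s$, so by the observation above $2^r - 2 \mid n$; contrapositively, if $2^r - 2 \nmid n$ then $\tilde{s}_n^{(r)} = 0$, which is the second line of the statement. Moreover, the defining condition for $\tilde{s}_n^{(r)} = 1$ is exactly the special case $j = 0$ of the characterization of $s_n^{(r)} = 1$ given in Lemma~\ref{lemma_sn_1} (and $0 \in \{0, 1, \dots, 2^r - 3\}$ since $r \geq 2$), so we always have $\tilde{s}_n^{(r)} \leq s_n^{(r)}$.

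It then remains to show that $2^r - 2 \mid n$ forces $\tilde{s}_n^{(r)} = s_n^{(r)}$. The inequality just noted settles the case $s_n^{(r)} = 0$, so assume $s_n^{(r)} = 1$ and write $n = \sum_{i=0}^{s}(2^{rn_i} - 2^{n_i}) + j$ with $1 < n_0 < \cdots < n_s$ and $j \in \{0, 1, \dots, 2^r - 3\}$ as in Lemma~\ref{lemma_sn_1}. Reducing modulo $2^r - 2$ and using that the sum is divisible by $2^r - 2$ gives $n \equiv j \pmod{2^r - 2}$; since $2^r - 2 \mid n$ this yields $2^r - 2 \mid j$, and because $0 \leq j \leq 2^r - 3 < 2^r - 2$ it forces $j = 0$. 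Hence $n = \sum_{i=0}^{s}(2^{rn_i} - 2^{n_i})$, so $\tilde{s}_n^{(r)} = 1 = s_n^{(r)}$, completing the proof.

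I do not expect a genuine obstacle: once the congruence $2^{rk}\equiv 2^k \pmod{2^r-2}$ is isolated, everything else is bookkeeping with Lemma~\ref{lemma_sn_1}. The one point worth checking explicitly is the degenerate case of an empty representation, i.e.\ $n$ lying in the initial block $\{0, 1, \dots, 2^r - 3\}$: among these only $n = 0$ is divisible by $2^r - 2$, and there $s_0^{(r)} = \tilde{s}_0^{(r)} = 1$, so the stated formula remains consistent.
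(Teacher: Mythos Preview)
Your argument is correct and is exactly the natural way to fill in what the paper leaves implicit: the paper merely states that the lemma follows easily from Lemma~\ref{lemma_sn_1}, and your proof via the congruence $2^{rk}\equiv 2^k\pmod{2^r-2}$ is precisely the computation that makes this work. Your handling of the edge case $n=0$ (the empty sum) is a nice touch that the paper glosses over.
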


\noindent It turns out that properties of the sequence $(\tilde{s}_n^{(r)})_{n \in \mathbb{N}}$ are similar to properties of the sequence $(z_n)_{n \in \mathbb{N}}$ introduced in \cite{PTM}. The sequence satisfies the equality
$$z_n = \left(\frac{d_{4n} + 1}{4} - n\right) \!\! \pmod 2,$$ 

\noindent where $(d_n)_{n \in \mathbb{N}}$ is the characteristic sequence of $0$'s of the formal inverse of the Thue--Morse sequence.

\vspace{0.3 cm}
\noindent Before we give the proof of the main result, we introduce some additional lemmas concerning the sequence $(\tilde{s}_n^{(r)})_{n \in \mathbb{N}}$. These lemmas are analogous to the respective lemmas in \cite{PTM} concerning the sequence $(z_n)_{n \in \mathbb{N}}$ and they can be proved using exactly the same methods.

\begin{lem}
Let $k \in \mathbb{N}$, $k \geq 2$. Then for each $n \in \left[2^{r(k+1) - 1}, \frac{4}{3}\cdot 2^{r(k+1)-1}\right]$ we have $\tilde{s}_n^{(r)} = 0.$
\end{lem}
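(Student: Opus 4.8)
The plan is to show that every $n$ in the interval $\left[2^{r(k+1)-1}, \tfrac{4}{3}\cdot 2^{r(k+1)-1}\right]$ cannot be written in the form $n = \sum_{i=0}^s\bigl(2^{rn_i}-2^{n_i}\bigr)$ with $1 < n_0 < \cdots < n_s$, so that $\tilde s_n^{(r)} = 0$ by definition. The key is to understand, for a fixed top exponent, the \emph{range} of values that such sums can take, and to verify that our interval falls into one of the gaps between consecutive ranges.

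\textbf{Step 1: reduce to a window analysis.} Fix $n$ in the stated interval and suppose for contradiction that $n = \sum_{i=0}^s(2^{rn_i}-2^{n_i})$ with $1 < n_0 < \cdots < n_s$. The dominant term is $2^{rn_s}$; I would first pin down $n_s$ by comparing $n$ against $2^{rn_s}$. Since $n < \tfrac{4}{3}\cdot 2^{r(k+1)-1} = \tfrac{2}{3}\cdot 2^{r(k+1)} < 2^{r(k+1)}$ and $n \ge 2^{r(k+1)-1}$, and since each summand $2^{rn_i}-2^{n_i}$ is positive and strictly less than $2^{rn_i}$, the largest exponent must satisfy $rn_s \le r(k+1) - 1 < r(k+1)$, hence $n_s \le k$; on the other hand a sum with all exponents at most $k-1$ is bounded above by $\sum_{j=2}^{k-1}(2^{rj}-2^j) < \tfrac{2^{r(k-1)+r}}{2^r-1} \le \tfrac{2^{rk}}{2^r-1} < 2^{r(k+1)-1}$ for $r \ge 2$, which is too small. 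So necessarily $n_s = k$.

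\textbf{Step 2: bound the tail.} Write $n = (2^{rk}-2^k) + n'$ where $n' = \sum_{i=0}^{s-1}(2^{rn_i}-2^{n_i})$ is a sum of the same shape with top exponent $n_{s-1} \le k-1$. As in Step 1, $0 \le n' \le \sum_{j=2}^{k-1}(2^{rj}-2^j) < \tfrac{2^{rk}}{2^r-1}$. Therefore
\begin{equation*}
2^{rk} - 2^k \;\le\; n \;<\; 2^{rk} - 2^k + \frac{2^{rk}}{2^r-1} \;=\; 2^{rk}\cdot\frac{2^r}{2^r-1} - 2^k.
\end{equation*}
Now I would compare this with the given interval. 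The lower bound $2^{rk}-2^k$ already exceeds $\tfrac{4}{3}\cdot 2^{r(k+1)-1}$: indeed $\tfrac{4}{3}\cdot 2^{r(k+1)-1} = \tfrac{2}{3}\cdot 2^{rk}\cdot 2^{r-1}\cdot\tfrac{1}{2^{r-1}}$— more cleanly, $\tfrac{4}{3}\cdot 2^{r(k+1)-1} = \tfrac{4}{3}\cdot 2^{r-1}\cdot 2^{rk} = \tfrac{2^{r+1}}{3}\cdot 2^{rk}$, and for $r \ge 2$ we have $\tfrac{2^{r+1}}{3} < 1$ only when $r=1$; for $r \ge 2$, $\tfrac{2^{r+1}}{3} \ge \tfrac{8}{3} > 1$, so actually $2^{rk}-2^k < \tfrac{4}{3}\cdot 2^{r(k+1)-1}$ and the naive comparison goes the wrong way. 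This means the rough bound of Step 1 is \emph{not} enough on its own, and the argument must be sharpened.

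\textbf{Step 3 (the main obstacle): a refined residue/interval argument.} The real content is that the set of attainable sums is sparse, and the interval $\left[2^{r(k+1)-1}, \tfrac{4}{3}\cdot2^{r(k+1)-1}\right]$ sits strictly between two ``clusters'' of attainable values. I expect the correct bookkeeping to be: any attainable $n$ with top exponent $k$ lies in $\bigl[2^{rk}-2^k,\ 2^{rk}-2^k + T_{k-1}\bigr]$ where $T_{k-1} := \sum_{j=2}^{k-1}(2^{rj}-2^j)$, and any attainable $n$ with top exponent $k+1$ is at least $2^{r(k+1)}-2^{k+1}$. The claim then reduces to checking the two inequalities
\begin{equation*}
2^{rk} - 2^k + T_{k-1} \;<\; 2^{r(k+1)-1} \qquad\text{and}\qquad \tfrac{4}{3}\cdot 2^{r(k+1)-1} \;<\; 2^{r(k+1)} - 2^{k+1},
\end{equation*}
together with the observation that no attainable value has top exponent strictly between $k$ and $k+1$. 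The second inequality is immediate since $\tfrac{4}{3}\cdot 2^{r(k+1)-1} = \tfrac{2}{3}\cdot 2^{r(k+1)} < 2^{r(k+1)}-2^{k+1}$ for $k$ large (and all relevant $k\ge 2$, $r\ge2$, which one checks directly). The first inequality uses $T_{k-1} < \tfrac{2^{rk}}{2^r-1}\le\tfrac{2^{rk}}{3}$, giving left side $< 2^{rk}+\tfrac{2^{rk}}{3} = \tfrac{4}{3}2^{rk}$, which must be shown to be $\le 2^{r(k+1)-1} = 2^{r-1}\cdot 2^{rk}$, i.e. $\tfrac{4}{3} \le 2^{r-1}$, true for $r \ge 3$ and needing a separate direct check for $r = 2$. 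The hard part is organizing these case distinctions cleanly and handling the boundary case $r=2$ (where one may need the sharper bound $T_{k-1} = \tfrac{2^{rk}-2^{2r}}{2^r-1} - (2^k-4)$ rather than the crude one); once the interval separation is established, $\tilde s_n^{(r)}=0$ on the whole window follows at once.
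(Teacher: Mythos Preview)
Your approach is correct and is precisely what the paper intends by ``analogous to \cite[Lemma 4.2]{PTM}'': locate the top exponent $n_s$ and show the given interval falls into the gap between the clusters of attainable sums with $n_s=k$ and $n_s=k+1$.

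Two clean-ups are worth noting. First, Step 2 is a false start and can be deleted: once Step 1 gives $n_s=k$, the correct comparison is that the \emph{maximum} attainable value with top exponent $k$ lies below the \emph{bottom} of the interval, which is exactly your inequality (a) in Step 3. Second, your worry about $r=2$ in (a) is unfounded. The crude bound already yields
\[
2^{rk}-2^{k}+T_{k-1} \;<\; 2^{rk}\cdot\frac{2^{r}}{2^{r}-1} \;\le\; 2^{rk}\cdot 2^{r-1} \;=\; 2^{r(k+1)-1},
\]
since $\frac{2^{r}}{2^{r}-1}\le 2^{r-1}$ is equivalent to $2^{r}\ge 3$, which holds for every $r\ge 2$; no sharper estimate on $T_{k-1}$ and no case distinction are needed. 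Finally, inequality (b) is just a restatement of ``$n_s\le k$'' from Step~1 (if $n_s\ge k+1$ then $n\ge 2^{r(k+1)}-2^{k+1}>\tfrac{4}{3}\,2^{r(k+1)-1}$), so it is already done.
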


\begin{proof}
Analogous to \hspace{-0.2pt}{\cite[Lemma 4.2]{PTM}}.
\end{proof}

\begin{lem}
For all $k \in \mathbb{N}$, $k \geq 2$, there exists $n \in \mathbb{N}$ such that $k {\mid} n$ and $\tilde{s}_n^{(r)} = 1$.
\end{lem}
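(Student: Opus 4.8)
The plan is to sidestep any fine analysis of the sequence $(\tilde{s}_n^{(r)})_{n\in\mathbb{N}}$ and instead exhibit, for each $k\geq 2$, one explicit multiple of $k$ that belongs to the set defining $\tilde{s}^{(r)}=1$. The key observation is that the numbers $B_a:=2^{ra}-2^a$ with $a\geq 2$ are precisely the admissible ``building blocks'' in the definition of $(\tilde{s}_n^{(r)})_{n\in\mathbb{N}}$: for any finite set $S\subseteq\{2,3,4,\dots\}$ one has $\tilde{s}_n^{(r)}=1$ with $n=\sum_{a\in S}B_a$. Hence it is enough to find such a set $S$ with $k\mid\sum_{a\in S}B_a$.

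To produce $S$ I would run a pigeonhole argument on partial sums. For $m\geq 1$ set $P_m=\sum_{a=2}^{m}B_a$ (so $P_1=0$). Since there are infinitely many indices $m$ but only $k$ residue classes modulo $k$, there exist $1\leq m_1<m_2$ with $P_{m_1}\equiv P_{m_2}\pmod{k}$, and then
$$n:=\sum_{a=m_1+1}^{m_2}B_a=P_{m_2}-P_{m_1}\equiv 0\pmod{k}.$$
This $n$ equals $\sum_{i=0}^{s}(2^{rn_i}-2^{n_i})$ with $n_i=m_1+1+i$, so $1<m_1+1=n_0<n_1<\dots<n_s=m_2$, which means $\tilde{s}_n^{(r)}=1$ by definition. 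Also $n>0$, because each $B_a=2^{ra}-2^a$ is positive for $a\geq 2$ (here $r\geq 2$ is used), so $n$ is a genuine positive multiple of $k$ with $k\mid n$.

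There is essentially no serious obstacle here; the only points needing a little care are the two constraints baked into the definition of $\tilde{s}^{(r)}$, namely that the exponents be strictly increasing and strictly greater than $1$, and both are automatically satisfied by starting the partial sums $P_m$ at $a=2$ and taking the nonempty ``gap'' $\{m_1+1,\dots,m_2\}$. This is exactly the strategy used for the analogous statement about $(z_n)_{n\in\mathbb{N}}$ in \cite{PTM}. If one prefers an explicit witness over pigeonhole, write $k=2^t k'$ with $k'$ odd and let $a$ be any common multiple of $t$, of $2$, and of the multiplicative order of $2$ modulo $k'$ (taken to be $1$ when $k'=1$); then $k'\mid 2^{(r-1)a}-1$ and $2^t\mid 2^a$, so $k\mid 2^a\bigl(2^{(r-1)a}-1\bigr)=2^{ra}-2^a$, and since $a>1$ the single-term value $n=2^{ra}-2^a$ already satisfies $\tilde{s}_n^{(r)}=1$ and $k\mid n$.
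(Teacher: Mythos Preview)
Your pigeonhole argument is correct and is precisely the approach the paper intends: the paper's own proof consists solely of a reference to the analogous lemma in \cite{PTM}, and your argument reproduces that strategy in the present setting. The explicit single-term witness at the end is a nice bonus; just note that when $k$ is odd (so $t=0$) the condition ``$t\mid a$'' should be read as $a\geq t$, which is automatic once $2\mid a$.
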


\begin{proof}
Analogous to \hspace{-0.2pt}{\cite[Lemma 4.3]{PTM}}.
\end{proof}

\begin{lem} \label{sn_2aut}
The sequence $(\tilde{s}_n^{(r)})_{n \in \mathbb{N}}$ is not $2$-automatic.
\end{lem}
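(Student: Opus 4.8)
The plan is to derive a contradiction from the assumption that $(\tilde{s}_n^{(r)})_{n \in \mathbb{N}}$ is $2$-automatic by exploiting the combinatorial structure of its support, which by definition consists exactly of the numbers of the form $\sum_{i=0}^{s}(2^{rn_i} - 2^{n_i})$ with $1 < n_0 < \cdots < n_s$. The key tension is between the ``large gap'' behaviour and the ``divisibility'' behaviour: the preceding two lemmas show that $(\tilde{s}_n^{(r)})$ has arbitrarily long runs of zeros lying in blocks of the shape $\bigl[2^{r(k+1)-1}, \tfrac{4}{3}\cdot 2^{r(k+1)-1}\bigr]$, yet it also takes the value $1$ at a multiple of every integer $k \geq 2$, so the set of $1$'s cannot be eventually confined to residue classes that a finite automaton would force. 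My intention is to combine these facts with the standard pumping/kernel machinery for automatic sequences.

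First I would argue by contradiction: suppose $(\tilde{s}_n^{(r)})_{n\in\mathbb N}$ is $2$-automatic, so its $2$-kernel is finite, say with $N$ elements, and in particular the set $S = \{ n : \tilde{s}_n^{(r)} = 1\}$ is a $2$-automatic set. Then I would recall the characterization (via \cite[Chapter 5]{AS}, or directly from finiteness of the kernel) that a $2$-automatic set $S$ has the property that for some constant $c$, its intersection with each dyadic block $[2^{m}, 2^{m+1})$ is described by a bounded-size automaton reading the low-order bits; more usefully, I would use the ``pumping'' consequence that if $S$ contains infinitely many elements then for every sufficiently long zero-run there is a corresponding structural periodicity. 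The cleanest route is: since $S$ is infinite and $2$-automatic, the gap sequence between consecutive elements of $S$ is itself related to an automatic sequence; but the earlier lemma forces gaps of length at least $\tfrac{1}{3}\cdot 2^{r(k+1)-1}$ starting at position $2^{r(k+1)-1}$, i.e.\ gaps growing linearly in the position, which already contradicts the fact that an infinite automatic set has gaps that are $O(1)$ along a syndetic subsequence --- more precisely, an infinite $2$-automatic set cannot have the ratio (gap length)/(position) bounded away from $0$ unless its indicator is eventually a very restricted shape.

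To make that last point airtight, I would instead run the argument through residue classes: if $(\tilde{s}_n^{(r)})$ is $2$-automatic then for every fixed modulus $q$ the subsequences $(\tilde{s}_{qn+j}^{(r)})_{n}$ are $2$-automatic and, being $\{0,1\}$-valued automatic sequences, each is either eventually $0$ or contains $1$'s with bounded gaps (along the subsequence) --- here I would invoke the dichotomy that a nonzero $k$-automatic sequence over a finite alphabet has bounded gaps between occurrences of any recurrent letter, which follows from \cite[Theorem 10.9.?]{AS} or directly from the finite-kernel pumping lemma. Choose $q = 2^r - 2$; by Lemma~\ref{lemma_tildes} the nonzero values of $(\tilde{s}_n^{(r)})$ all lie in the residue class $0 \bmod (2^r-2)$, so the subsequence $(\tilde{s}_{(2^r-2)n}^{(r)})_n$ carries all the $1$'s and is $2$-automatic; applying the lemma guaranteeing a $1$ at a multiple of each $k\geq 2$ shows this subsequence is not eventually zero, hence it must have bounded gaps between its $1$'s; but translating the long-zero-block lemma (the interval $[2^{r(k+1)-1}, \tfrac43 2^{r(k+1)-1}]$ is free of $1$'s) into this subsequence produces zero-runs of unbounded length, the desired contradiction.

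I expect the main obstacle to be the bookkeeping in the last step --- namely, verifying carefully that the zero-block lemma, stated for $(\tilde{s}_n^{(r)})$ on an interval whose \emph{length} grows like $2^{r(k+1)-1}$, does indeed yield zero-runs of unbounded length \emph{within the thinned-out subsequence} $(\tilde{s}_{(2^r-2)n}^{(r)})_n$ and not just in the original sequence; since the thinning is by a fixed factor $2^r-2$, a zero-block of length $L$ in the original contains roughly $L/(2^r-2)$ consecutive indices of the subsequence, which still tends to infinity, so this goes through, but one has to be precise about the endpoints and about which divisibility lemma supplies the interfering $1$'s. A secondary subtlety is choosing the precise ``bounded gaps'' statement for automatic sequences and checking it applies to the thinned subsequence (which is automatic because automaticity is preserved under $n \mapsto pn+q$); once that is pinned down, the contradiction is immediate, so the proof can reasonably conclude with ``which contradicts the assumption, and the lemma follows.''
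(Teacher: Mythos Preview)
Your argument has a genuine gap: the dichotomy you rely on --- that a $\{0,1\}$-valued $2$-automatic sequence which is not eventually zero must have bounded gaps between its $1$'s --- is false. The characteristic sequence of the powers of $2$ is $2$-automatic (its $2$-kernel has exactly three elements), is not eventually zero, and the gaps between consecutive $1$'s are $2^{k+1}-2^k=2^k$, hence unbounded. The only general gap statement for an infinite $k$-automatic set $S=\{s_1<s_2<\cdots\}$ is that $\limsup_{n} s_{n+1}/s_n<\infty$; it does \emph{not} bound $s_{n+1}-s_n$. The zero-block lemma here yields blocks of zeros of length about $\tfrac13\cdot 2^{r(k+1)-1}$ located near position $2^{r(k+1)-1}$, so the gap-to-position ratio stays bounded and is perfectly compatible with $2$-automaticity. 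Passing to the arithmetic progression $n\equiv 0\pmod{2^r-2}$ does not rescue the argument: the thinned sequence is again $2$-automatic, and the divisibility lemma tells you only that it is not eventually zero --- precisely the situation of the powers-of-$2$ counterexample.

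The paper gives no self-contained proof; it defers to the analogous \cite[Lemma~4.4]{PTM}. That argument uses the two preceding lemmas through the $2$-kernel rather than through any bounded-gap principle: the divisibility lemma guarantees that each subsequence $(\tilde{s}^{(r)}_{2^in})_n$ is not identically zero, so if the $2$-kernel were finite two of these would have to coincide; the resulting invariance under multiplication by a fixed power of $2$ then propagates a known value $1$ along a geometric progression of positions, and one arranges the arithmetic so that this progression meets one of the forbidden intervals $\bigl[2^{r(k+1)-1},\tfrac43\cdot 2^{r(k+1)-1}\bigr]$, contradicting the zero-block lemma. The substance is in that steering step, which uses the specific shape of the intervals and of the support of $\tilde{s}^{(r)}$; no off-the-shelf recurrence theorem for automatic sequences will do it for you.
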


\begin{proof}
Analogous to \hspace{-0.2pt}{\cite[Lemma 4.4]{PTM}}.
\end{proof}

\begin{lem} \label{sn_kaut}
Let $k > 1$ be a positive integer such that $k$ is not of the form $2^m$, $m \in \mathbb{N}$. Then the sequence $(\tilde{s}_n^{(r)})_{n \in \mathbb{N}}$ is not $k$-automatic.
\end{lem}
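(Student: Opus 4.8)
Let $k > 1$ be a positive integer such that $k$ is not of the form $2^m$, $m \in \mathbb{N}$. Then the sequence $(\tilde{s}_n^{(r)})_{n \in \mathbb{N}}$ is not $k$-automatic.

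---\noindent\textbf{Proof plan.} The plan is to mimic the proof of Lemma \ref{sn_2aut} --- that is, the argument of \cite[Lemma~4.4]{PTM} --- but now with an arbitrary base $k$ that is not a power of $2$, and to derive a contradiction from the two structural lemmas proved just above (the one giving long runs of zeros at the scales $2^{r(J+1)-1}$, and the one giving $1$'s in every arithmetic progression $\{n : m \mid n\}$). First I would argue by contradiction: assume $(\tilde{s}_n^{(r)})_{n \in \mathbb{N}}$ is $k$-automatic, and fix a deterministic finite automaton with output generating it from the base-$k$ digits of $n$, read most-significant-digit first; let $T$ be its number of states. I would immediately record that, since $k$ is not of the form $2^m$, the integers $k$ and $2$ are multiplicatively independent; this is the only place where the hypothesis on $k$ enters, and it is what makes this case genuinely different from the already-excluded case $k = 2^m$ (for which $2^m$-automaticity coincides with $2$-automaticity, and Lemma \ref{sn_2aut} applies directly).

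Next I would turn the two preceding lemmas into statements about this automaton. From the lemma on runs of zeros, for every $J \ge 2$ the whole interval $\bigl[\,2^{r(J+1)-1},\ \tfrac43\,2^{r(J+1)-1}\,\bigr]$ lies in the zero-set, so $(\tilde{s}_n^{(r)})_{n \in \mathbb{N}}$ carries blocks of zeros whose length is a fixed positive proportion of their starting point, occurring along the base-$2$ scales $2^{r(J+1)-1}$. From the lemma asserting that $1$'s occur in $\{n : m \mid n\}$ for every $m$, taken with $m = k^t$, for every $t$ there is an $n$ with $k^t \mid n$ and $\tilde{s}_n^{(r)} = 1$; writing such an $n$ in base $k$ as its significant part followed by $t$ zeros, and applying the pigeonhole principle to the orbit of the $0$-transitions (there being only $T$ states), I would obtain a state $p$, reachable from the start state, together with integers $t_0, d \ge 1$, such that feeding $0^t$ from $p$ ends in an accepting state whenever $t \ge t_0$ and $t \equiv t_0 \pmod d$.

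The final and hardest step is to show that these two facts cannot coexist. Using the hypothesis that $k$ is not a power of $2$ (an equidistribution/density estimate for the base-$k$ expansions of the numbers $2^{r(J+1)-1}$, exactly of the kind used in \cite{PTM}), one shows that for $J$ large the zero-block above must contain an integer whose base-$k$ expansion consists of a prefix leading the automaton to the state $p$ followed by a string $0^t$ with $t \ge t_0$ and $t \equiv t_0 \pmod d$; for that integer the automaton accepts, i.e.\ $\tilde{s}_n^{(r)} = 1$, contradicting that the entire block lies in the zero-set, and the contradiction finishes the proof. I expect this reconciliation of a base-$2$ phenomenon (the zero-blocks) with a base-$k$ phenomenon (the trailing-zero acceptance) to be the main obstacle; everything else --- the automaton's loop lengths, the choice of $t$ and $J$, and the density estimate --- is routine and can be carried out essentially verbatim as in the corresponding argument of \cite{PTM}, since $(\tilde{s}_n^{(r)})_{n \in \mathbb{N}}$ obeys the same structural lemmas as the sequence $(z_n)_{n \in \mathbb{N}}$ considered there.
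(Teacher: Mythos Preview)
Your proposal is correct and takes essentially the same approach as the paper: the paper's own proof is the single line ``Analogous to \cite[Lemma~4.5]{PTM}'', and the argument you sketch---long zero-blocks at base-$2$ scales, $1$'s at multiples of every $k^t$, a pigeonhole on the $0$-transition orbit, and the multiplicative-independence/density step reconciling the two bases---is exactly an expansion of that lemma's proof, specialized to $(\tilde{s}_n^{(r)})_{n\in\mathbb{N}}$. One small slip: you describe this as mimicking \cite[Lemma~4.4]{PTM} (the $2$-automatic case), but what you actually outline is the \cite[Lemma~4.5]{PTM} argument for $k$ not a power of $2$, which is the correct reference.
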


\begin{proof}
Analogous to \hspace{-0.2pt}{\cite[Lemma 4.5]{PTM}}.
\end{proof}

\vspace{0.2 cm}
\noindent We are now ready to prove the main result.
\begin{thm} The sequence $(v_n^{(r)})_{n \in \mathbb{N}}$ is not $k$-regular for any $k \geq 2$.
\end{thm}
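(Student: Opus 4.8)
The plan is to reduce the non-regularity of $(v_n^{(r)})_{n \in \mathbb{N}}$ to the non-automaticity of $(\tilde{s}_n^{(r)})_{n \in \mathbb{N}}$ established in Lemmas \ref{sn_2aut} and \ref{sn_kaut}, passing through the auxiliary sequences $(w_n^{(r)})_{n \in \mathbb{N}}$ and $(s_n^{(r)})_{n \in \mathbb{N}}$. Suppose, for contradiction, that $(v_n^{(r)})_{n \in \mathbb{N}}$ is $k$-regular for some $k \geq 2$. Since $(v_n^{(r)})_{n \in \mathbb{N}}$ is $k$-regular, so is its shift $(v_{n+1}^{(r)})_{n \in \mathbb{N}}$, and hence so is $(w_n^{(r)})_{n \in \mathbb{N}} = (v_{n+1}^{(r)} - 1)_{n \in \mathbb{N}}$, because the class of $k$-regular sequences is closed under addition of the constant sequence $(-1)_{n \in \mathbb{N}}$ (which is trivially $k$-regular). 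The identity sequence $(n)_{n \in \mathbb{N}}$ is $k$-regular as well, so $(w_n^{(r)} - n)_{n \in \mathbb{N}}$ is $k$-regular, and therefore $\left(\frac{w_n^{(r)} - n}{2}\right)_{n \in \mathbb{N}}$ is $k$-regular — here one must observe that $w_n^{(r)} - n$ is always even (this follows from equation (\ref{wn_n_eq}) together with the parity analysis in the proof of Lemma \ref{lemma_sn_1}, where this quantity is shown to equal $4k+2$ or $4k$ up to the relevant offsets), so division by $2$ stays within the integers and preserves $k$-regularity. Reducing modulo $2$, the resulting sequence $(s_n^{(r)})_{n \in \mathbb{N}}$ is then $k$-automatic, since reducing a $k$-regular sequence modulo $m$ yields a $k$-automatic sequence.

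Next I would transfer automaticity from $(s_n^{(r)})_{n \in \mathbb{N}}$ to $(\tilde{s}_n^{(r)})_{n \in \mathbb{N}}$ using Lemma \ref{lemma_tildes}, which states that $\tilde{s}_n^{(r)}$ equals $s_n^{(r)}$ when $(2^r-2) \mid n$ and equals $0$ otherwise. The sequence $(\tilde{s}_n^{(r)})_{n \in \mathbb{N}}$ is thus obtained from $(s_n^{(r)})_{n \in \mathbb{N}}$ by multiplying termwise by the characteristic sequence of the arithmetic progression $\{n : (2^r-2) \mid n\}$. That characteristic sequence is eventually periodic, hence $k$-automatic for every $k \geq 2$; and the class of $k$-automatic sequences is closed under termwise product (equivalently, the product of the two sequences is a letter-to-letter image of the product automaton). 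Therefore $(\tilde{s}_n^{(r)})_{n \in \mathbb{N}}$ would be $k$-automatic. This contradicts Lemma \ref{sn_2aut} if $k$ is a power of $2$, and Lemma \ref{sn_kaut} if $k$ is not a power of $2$. Since these two cases exhaust all $k \geq 2$, we obtain the desired contradiction, completing the proof.

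The main obstacle I anticipate is justifying cleanly that $(w_n^{(r)})_{n \in \mathbb{N}}$ is $k$-regular from the assumed $k$-regularity of $(v_n^{(r)})_{n \in \mathbb{N}}$, and in particular that the subtraction and the division by $2$ are legitimate operations inside the category of integer-valued $k$-regular sequences. The subtraction is harmless because constant sequences are regular and regularity is closed under sums; the shift is harmless by the shift-invariance recalled in the introduction. The division by $2$ is the delicate point: it is only valid because $w_n^{(r)} - n$ is provably even for all $n$, which one should state explicitly and cite back to the proof of Lemma \ref{lemma_sn_1} (the quantity $w_n^{(r)} - n$ counts the $m_k^{(r)}$ below $w_n^{(r)}$, and the group-length analysis forces its parity to be consistent with the definition of $s_n^{(r)}$). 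Once evenness is on record, $\frac{1}{2}(w_n^{(r)}-n)$ is an integer-valued sequence that is a $\mathbb{Z}$-linear combination of $k$-regular sequences — formally, one notes $w_n^{(r)} - n = 2 \cdot \frac{w_n^{(r)}-n}{2}$ and uses that the latter, being the image of the former under the module endomorphism ``halving'', inherits regularity since the $k$-kernel relations are preserved by this endomorphism. After that the argument is entirely formal, relying only on closure properties and on the four lemmas already proved in the excerpt.
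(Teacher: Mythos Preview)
Your proof is correct and follows exactly the paper's route: assume $(v_n^{(r)})$ is $k$-regular, deduce that $(w_n^{(r)}-n)$ is $k$-regular, hence $(s_n^{(r)})$ is $k$-automatic, hence by Lemma~\ref{lemma_tildes} so is $(\tilde{s}_n^{(r)})$, contradicting Lemmas~\ref{sn_2aut} and~\ref{sn_kaut}. Your worry about the division by $2$ is unnecessary: once $(w_n^{(r)}-n)$ is $k$-regular its reduction modulo $4$ is $k$-automatic, and $s_n^{(r)}$ is simply a coding of that reduction (it reads off whether the residue is $2$), so no halving step---and no evenness verification---is actually needed.
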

\begin{proof} Suppose that the sequence $(v_n^{(r)})_{n \in \mathbb{N}}$ is $k$-regular for some $k \geq 2$. Then the sequence $(w_n^{(r)} - n)_{n \in \mathbb{N}}$ is also $k$-regular, hence the sequence $(s_n^{(r)})_{n \in \mathbb{N}}$ is $k$-automatic. From Lemma \ref{lemma_tildes}, we have that the sequence $(\tilde{s}_n^{(r)})_{n \in \mathbb{N}}$ is also $k$-automatic, which is a contradiction with Lemma \ref{sn_2aut} and Lemma \ref{sn_kaut}.
\end{proof}

\section{Open problems}

\noindent In this short section, we give some open problems connected with the Baum--Sweet sequence and the formal inverses of its modifications.

\begin{prob} Let $(h_n)_{n \in \mathbb{N}}$ be the characteristic sequence of $0$'s in the Baum--Sweet sequence, i.e.\ an increasing sequence satisfying the equality
$$\{h_n : n \in \mathbb{N}\} = \{m \in \mathbb{N} : b_m = 0\}.$$

\noindent Is the sequence $(h_n)_{n \in \mathbb{N}}$ $k$-regular for some $k$?
\end{prob}

\noindent In section \ref{sec_basic}, we proved that the sequence $(l_n)_{n \in \mathbb{N}}$, which is characteristic sequence of $1$'s in the Baum--Sweet sequence, is not $k$-regular for any $k$, which was the consequence of the fact that the frequency of $1$'s in the sequence $(l_n \!\!\! \mod 2)$ is not a rational number. Unfortunately, it turns out that if the frequency of $0$'s in sequence $(h_n \!\!\! \mod 2)$ exists, then it is rational.

\vspace{0.3 cm}
\noindent To prove this, we define the sequence of words $(H_n)_{n \in \mathbb{N}}$ in the following way:
$$H_0 = 0, \qquad H_1 = 10, \qquad H_{n+2} = H_n(01)^{2^n}H_{n+1}, \quad n \geq 2.$$

\noindent Denote by ${\bf h}$ the infinite word such that the $n$-th letter of ${\bf h}$ is equal to $h_n \!\! \mod 2$. Using the same method as in the proof of Theorem \ref{ln_mod2_eq}, we get that
$${\bf h} = H_0H_1H_2\dots$$ 

\noindent Using an easy induction, we get the equalities
$$|H_0H_1\dots H_n|_1 = 2^{n+1} - f_{n+3}, \qquad |H_0H_1\dots H_n| = 2^{n+2} - f_{n+4},$$

\noindent where $f_n$ denotes the $n$-th Fibonacci number. Hence we have
$$\lim_{n\to\infty} \frac{|H_0H_1\dots H_n|_1}{|H_0H_1\dots H_n|} = \lim_{n\to\infty}\frac{2^{n+1}-f_{n+3}}{2^{n+2}-f_{n+4}} = \frac{1}{2},$$

\noindent and therefore if the frequency of $1$'s in ${\bf h}$ exists, it is equal to $\frac{1}{2}$.

\begin{rem} 
It is not difficult to prove that the sequence $(h_n \!\!\!\mod 2)_{n \in \mathbb{N}}$ is morphic. Let $\Sigma = \{a,b,c,d,e,f\}.$ Let $\nu : \Sigma^{\star} \rightarrow \Sigma^{\star}$ be a morphism such that 
\begin{alignat*}{2}
\nu(a) &= bc, & \nu(d) &= de, \\
\nu(b) &= ad, & \nu(e) &= de, \\
\nu(c) &= ebc, & \hspace{0.7 cm} \nu(f) &= fbc.
\end{alignat*}

\noindent We define a coding $\tau : \Sigma \rightarrow \{0,1\}$ in the following way:
$$\tau(a) = \tau(c) = \tau(d) = \tau(f) = 0, \qquad \tau(b) = \tau(e) = 1.$$

\noindent Then it can be shown that we have the equality
$${\bf h} = \tau(\nu^{\omega}(f)).$$
\end{rem}

\vspace{0.3 cm}
\noindent The next problem is the generalization of the previous one.

\begin{prob} Let $r \geq 2$. Consider the sequence $(h_n^{(r)})_{n \in \mathbb{N}}$, which is the characteristic sequence of $0$'s in the sequence $(b_n^{(r)})_{n \in \mathbb{N}}$. Is the sequence $(h_n^{(r)})_{n \in \mathbb{N}}$ $k$-regular for some $k$?
\end{prob}

\noindent Let $p$ be a prime number. We denote by $s_p(n)$ the sum of digits of $n \in \mathbb{N}$ in base $p$. We define the sequence $(t_{p,n})_{n \in \mathbb{N}}$ as
$$t_{p,n} = s_p(n) \!\!\!\! \mod p \, \in \mathbb{F}_p.$$

\noindent The sequence $(t_{p,n})_{n \in \mathbb{N}}$ is then $p$-automatic. These sequences generalize the Thue--Morse sequence (we recover the original sequence in the case $p = 2$). 

\vspace{0.3 cm}
\noindent In section \ref{sec_inv2}, we proved that the formal inverse of the Thue--Morse sequence is closely related to the sequence $(q_n)_{n \in \mathbb{N}}$, which is the formal inverse of one of the modifications of the Baum--Sweet sequence. It would be interesting to find a similar connection involving the sequences $(t_{p,n})_{n \in \mathbb{N}}$.

\vspace{0.3 cm}
\noindent In order to find such a connection, we need to introduce a similar generalization of the Baum--Sweet sequence. Below we show one of the possible generalizations. It is unclear whether we can find such a connection in this particular case.

\vspace{0.3 cm}
\noindent We define the family of sequences $\{(b_{p,n})_{n \in \mathbb{N}} : p \text{ is prime}\}$ in the following way: for a fixed prime $p$, we have $b_{p,n} = 1$ if the length of every block of $0$'s in the base-$p$ expansion of $n$ is divisible by $p$ and $b_{p,n} = 0$ otherwise (with $b_{p,0} = 1$). The sequence $(b_{p,n})_{n \in \mathbb{N}}$ is $p$-automatic and for $p = 2$ we recover the original Baum--Sweet sequence. 

\vspace{0.3 cm}
\noindent We then introduce the sequence $(b'_{p,n})_{n \in \mathbb{N}}$ given by
$$b'_{p,n} = \left\{\begin{array}{ll}
0 & \text{ if } n = 0,\\
b_{p,n-1} & \text{ if } n \geq 1.
\end{array} \right.$$

\noindent The sequence $(b'_{p,n})_{n \in \mathbb{N}}$ has a formal inverse and therefore we can compare it to the formal inverse of the sequence $(t_{p,n})_{n \in \mathbb{N}}$.

\section*{Acknowledgements}
I would like to express my gratitude to my advisor, dr Jakub Byszewski, for numerous comments and suggestions.


\begin{thebibliography}{9}
\addcontentsline{toc}{section}{References}

\bibitem{AS} Jean-Paul Allouche and Jeffrey Shallit. {\it Automatic sequences: Theory, applications, generalizations.} Cambridge University Press, Cambridge, 2003.

\vspace{-0.15 cm}
\bibitem{REG} Jean-Paul Allouche and Jeffrey Shallit. {\it The ring of $k$-regular sequences.} Theoret. Comput. Sci., 98(2):163--197, 1992.

\vspace{-0.15 cm}
\bibitem{REG2} Jean-Paul Allouche and Jeffrey Shallit. {\it The ring of $k$-regular sequences. II.} Theoret. Comput. Sci., 307(1):3--29, 2003. Words.

\vspace{-0.15 cm}
\bibitem{BC} Peter Borwein and Michael Coons. {\it Transcendence of power series for some number theoretic functions.} Proc. Amer. Math. Soc., 137(4):1303--1305, 2009.

\vspace{-0.15 cm}
\bibitem{FATOU} Pierre Fatou, {\it S\'{e}ries trigonom\'{e}triques et s\'{e}ries de Taylor}. Acta Math. 30 (1906), no. 1, 335--400.

\vspace{-0.15 cm}
\bibitem{PTM} Maciej Gawron and Maciej Ulas. {\it On formal inverse of the Prouhet-Thue-Morse sequence.} Discrete Math., 339(5):1459--1470, 2016.

\vspace{-0.15 cm}
\bibitem{LANG} Serge Lang. {\it Complex analysis}, volume 103 of {\it Graduate Texts in Mathematics.} Springer-Verlag, New York, fourth edition, 1999.

\vspace{-0.15 cm}
\bibitem{MT} Łukasz Merta. {\it Composition inverses of certain automatic power series.} Master's thesis, Jagiellonian University, June 2017.
\end{thebibliography}
\end{document}